\newtheorem{theorem}{Theorem}[section]
\newtheorem{corollary}[theorem]{Corollary}
\newtheorem{definition}[theorem]{Definition}
\newtheorem{lemma}[theorem]{Lemma}
\newtheorem{proposition}[theorem]{Proposition}
\newtheorem{remark}[theorem]{Remark}
\newtheorem{example}[theorem]{Example}
\newenvironment{proof}{\begin{trivlist}\item[]{\it Proof.}}
{\hfill$\square$\end{trivlist}}
\newenvironment{proofofprop}[1]{\noindent{\it Proof of Proposition#1}}{\hfill$\square$\\\mbox{}}
\def\quiver{Q} 
\def\polytope{\nabla}
\def\floweqs{\mathcal{F}}
\def\ideal{I}
\def\rep{{\mathrm{Rep}}}
\def\moduli{{\mathcal{M}}}
\def\coord{{\mathcal{O}}}
\def\proj{{\mathrm{Proj}}}
\def\affspan{{\mathrm{AffSpan}}}
\def\skeleton{{\mathcal{S}}}
\def\sklist{{\mathcal{L}}}
\def\reducedquivers{\mathcal{R}}
\def\subforest{T}
\def\subtree{T}
\def\support{\mathrm{supp}}
\def\homcoord{{\mathcal{A}}}
\def\semigr{S}
\def\cocoa{{\hbox{\rm C\kern-.13em o\kern-.07em C\kern-.13em o\kern-.15em A}}}
\def\mc{{\mathbb{C}}}
\def\mz{{\mathbb{Z}}}
\def\mn{{\mathbb{N}}}
\def\mr{{\mathbb{R}}}
\begin{document}

\title{On the equations and classification  of toric quiver varieties}
\author{{M. Domokos and D. Jo\'o}\thanks{Partially supported by OTKA  NK81203 and K101515. }}

\date{}
\maketitle 
{\small \begin{center} 
R\'enyi Institute of Mathematics, Hungarian Academy of Sciences,\\
Re\'altanoda u. 13-15, 1053 Budapest, Hungary \\
Email: domokos.matyas@renyi.mta.hu \quad joo.daniel@renyi.mta.hu 
\end{center}
}


\begin{abstract} 
Toric quiver varieties (moduli spaces of quiver representations) are studied. Given a quiver and a weight there is an associated quasiprojective toric variety together with a canonical embedding into projective space. It is shown that for a quiver with no oriented cycles the homogeneous ideal of this embedded projective variety is generated by elements of degree at most $3$. 
In each fixed dimension $d$ up to isomorphism there are only finitely many $d$-dimensional toric quiver varieties. A procedure for their classification is outlined. 
\end{abstract}

\noindent 2010 MSC: 14M25 (Primary) 14L24 (Secondary) 16G20 (Secondary) 52B20 (Secondary)

\noindent Keywords:  binomial ideal, moduli space of quiver representations, toric varieties


\section{Introduction}\label{sec:intro}

Geometric invariant theory was applied by King \cite{king} to introduce certain moduli spaces of representations of quivers. 
In the special case when the dimension vector takes value $1$ on each vertex of the quiver ({\it thin representations}), these moduli spaces are quasi-projective toric varieties; following \cite{altmann-hille} we call them {\it toric quiver varieties}. 
Toric quiver varieties were studied by Hille \cite{hille:chemnitz}, \cite{hille:canada}, \cite{hille:laa}, Altmann and Hille \cite{altmann-hille}, Altmann and van Straten \cite{altmann-straten}. Further motivation is provided by Craw and Smith \cite{craw-smith}, who showed that every projective toric variety  is the fine moduli space for stable thin representations of an appropriate quiver with relations. Another application was introduced  very recently by Carroll, Chindris and Lin \cite{carroll-chindris-lin}. 
From a different perspective, the projective toric quiver varieties are nothing but the  toric varieties associated to flow polytopes. 
Taking this point of departure, Lenz \cite{lenz} investigated toric ideals associated to flow polytopes. 
These are the homogeneous ideals of the projective toric variety associated to a flow polytope, canonically embedded into projective space.

Given a quiver (a finite directed graph) and a weight (an integer valued function on the set of vertices) there is an associated normal lattice polyhedron yielding a quasiprojective toric variety with a canonical embedding into projective space. This variety is projective if and only if the quiver has no oriented cycles. We show in Theorem~\ref{thm:degreethree} that the homogeneous ideal of this embedded projective variety is generated by elements of degree at most $3$. This is deduced from a recent result of 
Yamaguchi, Ogawa and Takemura \cite{yamaguchi-ogawa-takemura}, for which we give a simplified proof.   

It follows from work of Altmann and van Straten \cite{altmann-straten} and  Altmann,  Nill,  Schwentner and  Wiercinska \cite{altmann-nill-schwentner-wiercinska} that  
for each positive integer $d$ up to isomorphism there are only finitely many toric quiver varieties (although up to integral-affine equivalence  there are infinitely many $d$-dimensional quiver polyhedra). We make this notable observation explicit and provide a self-contained treatment yielding some refinements. 
Theorem~\ref{thm:directprod} asserts that any toric quiver variety is the product of prime (cf. Definition~\ref{def:prime}) toric quiver varieties, and this deomposition can be read off from the combinatorial structure of the quiver. 
Moreover, by Theorem~\ref{thm:3regular} any prime (cf. Definition~\ref{def:prime}) 
$d$-dimensional $(d>1)$ projective toric quiver variety can be obtained from a bipartite quiver with $5(d-1)$ vertices and $6(d-1)$ arrows, whose skeleton (cf. Definition~\ref{def:skeleton}) is $3$-regular.

A toric variety associated to a lattice polyhedron is covered by affine open toric subvarieties corresponding to the vertices of the polyhedron. 
In the case of quiver polyhedra the affine toric varieties arising that way are exactly the affine toric quiver varieties by our Theorem~\ref{thm:vertices} and Theorem~\ref{thm:compactification}.  According to Theorem~\ref{thm:compactification} any toric quiver variety can be obtained as the union in a projective toric quiver variety of the affine open subsets corresponding to a set of vertices of the quiver polytope. 

The paper is organized as follows. 
In Section~\ref{sec:flowpolytopes}  we review flow polytopes, quiver polytopes, quiver polyhedra, and their interrelations. In Section~\ref{sec:repsofquivers} we recall  moduli spaces of representations of quivers, including a very explicit realization of a toric quiver variety in Proposition~\ref{prop:realizationofmoduli}.  In Section~\ref{sec:reductions} we collect reduction steps for quiver--weight pairs that preserve the associated quiver polyhedron, and can be used to replace a quiver by another one which is simpler or smaller in  certain sense. These are used to derive the results concerning 
the classification of  toric quiver varieties. 
As an illustration the classification of $2$-dimensional toric quiver varieties  is recovered in Section~\ref{sec:2-dimensional}. 
Section~\ref{sec:affine} clarifies the interrelation of affine versus projective toric quiver varieties. 
Section~\ref{sec:semigroupalgebras} contains some generalities on presentations of semigroup algebras, from which we obtain Corollary~\ref{cor:joo} that provides the technical framework for the proof in Section~\ref{sec:equations} of Theorem~\ref{thm:degreethree} about the equations for the natural embedding of a toric quiver variety into projective space. For sake of completeness of the picture we show in Section~\ref{sec:japanok} how the main result of \cite{yamaguchi-ogawa-takemura} can be derived from the special case Proposition~\ref{prop:theta1} used in the proof of Theorem~\ref{thm:degreethree}.  
We also point out in Theorem~\ref{thm:affinebound} that the ideal of relations among the minimal generators of the coordinate ring of a $d$-dimensional  affine toric quiver variety is generated in degree at most $d-1$, and this bound is sharp.


\section{Flow polytopes and their toric varieties} \label{sec:flowpolytopes}

 By a {\it polyhedron} we mean the intersection of finitely many closed half-spaces in $\mr^n$, and by a {\it polytope} we mean a bounded polyhedron, or equivalently, the convex hull of a finite subset in $\mr^n$ (this conforms the usage of these terms in  \cite{cox-little-schenck}). 
A {\it quiver}  is a finite directed graph $\quiver$ with vertex set $\quiver_0$ and arrow set $\quiver_1$. Multiple arrows, oriented cycles, loops are all allowed. 
For an arrow $a\in \quiver_1$ denote by 
$a^-$ its starting vertex and by $a^+$ its terminating vertex. Given an integral vector $\theta\in\mz^{\quiver_0}$ and non-negative integral vectors 
$\mathbf{l},\mathbf{u}\in\mn_0^{\quiver_1}$ consider the polytope 
\[\polytope=\polytope(Q,\theta,\mathbf{l},\mathbf{u})=\{x\in\mr^{\quiver_1}\mid \mathbf{l}\le x \le\mathbf{u},\quad \forall v\in\quiver_0:\quad 
\theta(v)=\sum_{a^+=v}x(a)-\sum_{a^-=v}x(a)\}.\] 
This is called a {\it flow polytope}. According to  the generalized Birkhoff-von Neumann Theorem $\polytope$ is a {\it lattice polytope} in $\mr^{\quiver_1}$, that is, its vertices belong to the lattice $\mz^{\quiver_1}\subset\mr^{\quiver_1}$ (see for example Theorem 13.11 in \cite{schrijver}).  
Denote by $X_{\polytope}$ the {\it projective toric variety} associated to $\polytope$ (cf. Definition 2.3.14 in \cite{cox-little-schenck}). 
The polytope $\polytope$ is {\it normal} (see Theorem  13.14 in \cite{schrijver}). 
 It follows that the abstract variety $X_{\polytope}$ can be identified with the Zariski-closure of the image of the map 
 \begin{equation}\label{eq:X_P} (\mc^{\times})^{\quiver_1}\to \mathbb{P}^{d-1},\quad t \mapsto (t^{m_1}:\dots:t^{m_d})\end{equation}
 where $\{m_1,\dots,m_d\}=\polytope \cap \mz^{\quiver_1}$, and for $t$ in the torus $(\mc^{\times})^{\quiver_1}$ and $m\in\mz^{\quiver_1}$ we write 
 $t^m:=\prod_{a\in\quiver_1}t(a)^{m(a)}$. From now on $X_{\polytope}$ will stand for this particular embedding in projective space of our variety, and we denote 
 by $\ideal(X_{\polytope})$ the corresponding vanishing ideal, so $\ideal(X_{\polytope})$ is a homogeneous ideal in $\mc[x_1,\dots,x_d]$ generated by binomials.  Normality of $\polytope$ implies that $X_{\polytope}$ is {\it projectively normal}, that is, its affine cone in $\mc^d$ is normal. 
We shall also use the notation 
\[\polytope(\quiver,\theta)=\{x\in\mr^{\quiver_1}\mid \mathbf{0}\le x,\quad \forall v\in\quiver_0:\quad 
\theta(v)=\sum_{a^+=v}x(a)-\sum_{a^-=v}x(a)\}.\] 
We shall call this  a {\it quiver polyhedron}. 
When $\quiver$ has no oriented cycles, then for  ${\mathbf{u}}$ large enough we have 
$\polytope(\quiver,\theta)=\polytope(\quiver,\theta,\mathbf{0},\mathbf{u})$, so $\polytope(\quiver,\theta)$ is a polytope; these polytopes will be called {\it quiver polytopes}. 
\begin{definition}\label{def:isomorphicpolytopes}
{\rm The lattice polyhedra  $\polytope_i\subset V_i$ with lattice $M_i\subset V_i$ $(i=1,2)$ are {\it integral-affinely equivalent} 
if there exists an affine linear isomorphism  $\varphi:\affspan(\polytope_1)\to\affspan(\polytope_2)$ of affine subspaces 
with the following properties: 
\begin{itemize}
\item[(i)] $\varphi$ maps $\affspan(\polytope_1)\cap M_1$ onto $\affspan(\polytope_2)\cap M_2$; 
\item[(ii)] $\varphi$ maps  $\polytope_1$ \ onto $\polytope_2$. 
\end{itemize} }
\end{definition} 

The phrase `integral-affinely equivalent' was chosen in accordance with \cite{bruns-gubeladze} (though in \cite{bruns-gubeladze} full dimensional lattice polytopes are considered).  
Obviously, if $\polytope_1$ and $\polytope_2$ are integral-affinely equivalent lattice polytopes, then the associated projective toric varieties 
$X_{\polytope_1}$ and $X_{\polytope_2}$ are isomorphic (and in fact they can be identified via their embeddings into projective space given by the $\polytope_i$). 
As we shall point out in Proposition~\ref{prop:flowpolytope-quiverpolytope} below, up to integral-affine equivalence, the class of flow polytopes coincides with the class of quiver polytopes, so the class of quiver polyhedra is the most general among the above classes.

\begin{proposition}\label{prop:flowpolytope-quiverpolytope} 
For any flow polytope $\polytope(\quiver, \theta,{\mathbf{l}},{\mathbf{u}})$ there exists a quiver $\quiver'$ with no oriented cycles and a weight $\theta'\in\mz^{\quiver'_1}$ such that the polytopes $\polytope(\quiver, \theta,{\mathbf{l}},{\mathbf{u}})$ and $\polytope(\quiver',\theta')$ are integral-affinely equivalent. 
\end{proposition}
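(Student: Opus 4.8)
The plan is to transform an arbitrary flow polytope $\polytope(\quiver,\theta,\mathbf{l},\mathbf{u})$ into a quiver polyhedron of the special form $\polytope(\quiver',\theta')$ by encoding the extra data---namely the lower bounds $\mathbf{l}$, the upper bounds $\mathbf{u}$, and any oriented cycles in $\quiver$---directly into the combinatorial structure of a new quiver $\quiver'$. The point is that the defining inequalities of a quiver polyhedron are only the nonnegativity constraints $\mathbf{0}\le x$ together with the flow (conservation) equations, so I must manufacture arrows and vertices whose nonnegativity conditions reproduce the box constraints $\mathbf{l}\le x\le\mathbf{u}$.

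First I would reduce to the case $\mathbf{l}=\mathbf{0}$. Substituting $y(a)=x(a)-\mathbf{l}(a)$ is an integral-affine change of coordinates on $\mr^{\quiver_1}$ preserving the lattice $\mz^{\quiver_1}$; under it the lower bound becomes $\mathbf{0}$, the upper bound becomes $\mathbf{u}-\mathbf{l}$, and the weight $\theta$ is replaced by a suitably adjusted weight $\theta''$ obtained by subtracting the net contribution of $\mathbf{l}$ at each vertex. This already handles condition (i) and (ii) of Definition~\ref{def:isomorphicpolytopes} for the shift, so from now on I may assume $\mathbf{l}=\mathbf{0}$ and only an upper bound $\mathbf{u}$ remains.

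Next I would eliminate the finite upper bounds by the standard flow-network device of adding a reverse/return arc for each bounded edge. For each arrow $a\in\quiver_1$ with a finite bound $\mathbf{u}(a)$, introduce a new arrow $\bar a$ together with a slack so that the relation $x(a)+s(a)=\mathbf{u}(a)$ is forced, where $s(a)\ge 0$ is the flow on the new arc; the constraint $x(a)\le\mathbf{u}(a)$ is then equivalent to $s(a)\ge 0$, which is exactly a nonnegativity condition of the new quiver, and $\mathbf{u}(a)$ gets absorbed into the weight $\theta'$ at the endpoints of the added arc. One checks that the projection forgetting the slack coordinates is an integral-affine isomorphism from the enlarged quiver polyhedron onto $\polytope$, because each $s(a)$ is determined by $x(a)$. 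Finally, to guarantee that $\quiver'$ has no oriented cycles, I would invoke the same reverse-arrow idea applied to cycles---or, more cleanly, first pass to a quiver polytope presentation and then appeal to the acyclic model used elsewhere---replacing any arrow lying on an oriented cycle by the construction above, which breaks the cycle while preserving the flow equations up to the added slacks.

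The main obstacle is bookkeeping: I must verify simultaneously that each added arrow contributes exactly the intended nonnegativity constraint, that the new flow equations impose no relation beyond $\mathbf{l}\le x\le\mathbf{u}$ and the original conservation laws, and that the resulting $\quiver'$ is genuinely acyclic. The subtle point is ensuring the projection is a \emph{lattice} isomorphism onto the affine span, i.e.\ that condition (i) holds and not merely condition (ii); this requires that the slack variables be integrally determined by the original coordinates, so that no new lattice points are created or lost. Once these compatibility checks are arranged, the composite of the coordinate shift and the slack-arrow extension furnishes the desired $\varphi$, and since the construction manifestly yields a quiver polyhedron with no oriented cycles, the polytope $\polytope(\quiver',\theta')$ is in fact a quiver polytope, completing the argument.
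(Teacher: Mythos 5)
Your overall strategy coincides with the paper's (translate by $\mathbf{l}$ to reduce to $\polytope(\quiver,\theta'',\mathbf{0},\mathbf{u}-\mathbf{l})$, then replace each arrow by a gadget turning the upper bound into a nonnegativity constraint), and your first step is correct. But the central gadget, as you describe it, has a genuine gap: you "introduce a new arrow $\bar a$ \dots so that the relation $x(a)+s(a)=\mathbf{u}(a)$ is forced," yet you never introduce any new \emph{vertices}, and without them nothing can force that relation. In a quiver polyhedron the only equations available are the conservation laws at vertices; for $x(a)+s(a)=\mathbf{u}(a)$ to be one of them you need a vertex incident to exactly these two arrows, with \emph{both} oriented into it (or both out of it) and carrying weight $\pm\mathbf{u}(a)$ --- and no such vertex exists unless $a$ is subdivided. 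If instead $\bar a$ is a parallel reverse arc from $a^+$ to $a^-$ (the literal reading of "reverse/return arc"), the conservation laws at $a^{\pm}$ only see the combination $x(a)-s(a)$: one can add the same constant to $x(a)$ and $s(a)$ without violating anything, so no upper bound on $x(a)$ is imposed, the slack is not determined by $x(a)$, the projection is not injective, and the pair $\{a,\bar a\}$ is a \emph{new} oriented $2$-cycle rather than a broken one. For the same reason your claim that the construction "breaks the cycle" is unsupported: whether cycles are destroyed depends entirely on where the new arrows attach and how they are oriented, which you leave unspecified.

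The missing ingredient is exactly the subdivision the paper uses: replace each arrow $a$ by two new vertices and three arrows with alternating orientations, $a^-\to v_a\leftarrow w_a\to a^+$, with weights $\theta'(v_a)=\mathbf{u}(a)=-\theta'(w_a)$ and $\theta'$ unchanged on $\quiver_0$. Conservation at $v_a$ gives $y(a_1)+y(a_2)=\mathbf{u}(a)$ (so $y(a_2)$ is your slack, and $y(a_2)\ge 0$ is precisely $x(a)\le\mathbf{u}(a)$), conservation at $w_a$ gives $y(a_2)+y(a_3)=\mathbf{u}(a)$, hence $y(a_3)=y(a_1)$ and the original flow equations are reproduced. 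Because the middle arrow points backwards, $v_a$ is a sink and $w_a$ is a source of the gadget, so there is no directed path from $a^-$ to $a^+$ through it; applying the gadget to \emph{every} arrow therefore destroys all oriented cycles at once, and no separate cycle-breaking step (your "appeal to the acyclic model used elsewhere," which is circular as stated) is needed. Finally, the map $x\mapsto y$ with $y(a_1)=y(a_3)=x(a)$, $y(a_2)=\mathbf{u}(a)-x(a)$ is affine with integer coefficients and its inverse is the integral projection onto the $a_1$-coordinates, which settles the lattice condition (i) of Definition~\ref{def:isomorphicpolytopes} that you rightly flagged as the delicate point. So your plan is repairable, but as written the key step neither encodes the bound nor breaks the cycles.
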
 

\begin{proof}  Note that $x\in\mr^{\quiver_1}$ belongs to $\polytope(\quiver,\theta,\mathbf{l},\mathbf{u})$ if and only if 
$x-\mathbf{l}$ belongs to $\polytope(Q,\theta',\mathbf{0},\mathbf{u}-\mathbf{l})$ where 
$\theta'$ is the weight given by $\theta'(v)=\theta(v)-\sum_{a^+=v}\mathbf{l}(a)+\sum_{a^-=v}\mathbf{l}(a)$. 
Consequently $X_{\polytope(\quiver,\theta,\mathbf{l},\mathbf{u})}=X_{\polytope(\quiver,\theta',\mathbf{0},\mathbf{u}-\mathbf{l})}$.
Therefore it is sufficient to deal with the flow polytopes $\polytope(\quiver,\theta,\mathbf{0},\mathbf{u})$. Define a new quiver $Q'$ as follows: add to the vertex set of $Q$ two new vertices $v_a$, $w_a$ for each $a\in\quiver_1$, and replace the arrow $a\in Q_1$ by three arrows $a_1,a_2,a_3$, where $a_1$ goes from $a^-$ to $v_a$, 
$a_2$ goes from $w_a$ to $v_a$, and $a_3$ goes from  $w_a$ to $a^+$. Let $\theta'\in\mz^{\quiver'_0}$ be the weight with $\theta'(v_a)=\mathbf{u}(a)=-\theta(w_a)$ for all $a\in\quiver_1$ and $\theta'(v)=\theta(v)$ for all $v\in \quiver_0$. Consider the linear map $\varphi:\mr^{\quiver_1}\to\mr^{\quiver'_1}$, $x\mapsto y$, where 
$y(a_1):=x(a)$, $y(a_3):=x(a)$, and $y(a_2)=\mathbf{u}(a)-x(a)$ for all $a\in\quiver_1$. It is straightforward to check that $\varphi$ is an affine linear transformation that restricts to an isomorphism 
$\affspan(\polytope(\quiver,\theta,\mathbf{0},\mathbf{u}))\to \affspan(\polytope(\quiver',\theta'))$ with the properties (i) and (ii) in Definition~\ref{def:isomorphicpolytopes}. 
\end{proof}


\section{Moduli spaces of quiver representations} \label{sec:repsofquivers} 

A {\it representation} $R$ of $\quiver$ assigns a finite dimensional $\mc$-vector space $R(v)$ to each vertex $v\in\quiver_0$ and a linear map 
$R(a):R(a^-)\to R(a^+)$ to each arrow $a\in \quiver_1$. A morphism between representations $R$ and $R'$ consists of a collection of linear maps 
$L(v):R(v)\mapsto R'(v)$ satisfying $R'(a)\circ L(a^-)=L(a^+)\circ R(a)$ for all $a\in\quiver_1$. The {\it dimension vector} of $R$ is $(\dim_{\mc}(R(v))\mid v\in \quiver_0)\in\mn^{\quiver_0}$. For a fixed dimension vector $\alpha\in\mn^{\quiver_0}$, 
\[\rep(\quiver,\alpha):=\bigoplus_{a\in \quiver_1}\hom_{\mc}(\mc^{\alpha(a^-)},\mc^{\alpha(a^+)})\] 
is the {\it space of $\alpha$-dimensional representations} of $\quiver$. The product of general linear groups 
$GL(\alpha):=\prod_{v\in\quiver_0}GL_{\alpha(v)}(\mc)$ acts linearly on $\rep(\quiver,\alpha)$ via 
\[g\cdot R:=(g(a^+)R(a)g(a^-)^{-1}\mid a\in \quiver_1)\quad (g\in GL(\alpha), R\in\rep(\quiver,\alpha)).\] 
The $GL(\alpha)$-orbits in $\rep(\quiver,\alpha)$ are in a natural bijection with the isomorphism classes of $\alpha$-dimensional representations of $\quiver$. 
Given a {\it weight} $\theta\in\mz^{\quiver_0}$, a representation $R$ of $\quiver$ is called {\it $\theta$-semi-stable} if $\sum_{v\in\quiver_0}\theta(v)\dim_{\mc}(R(v))=0$ and 
$\sum_{v\in\quiver_0}\theta(v)\dim_{\mc}(R'(v))\ge0$ for all subrepresentations $R'$ of $R$. The points in $\rep(\quiver,\alpha)$ corresponding to $\theta$-semi-stable representations constitute a Zariski open subset $\rep(\quiver,\alpha)^{\theta-ss}$ in the representation space, and in \cite{king}  Geometric Invariant Theory (cf. \cite{newstead}) is applied to  
define a variety 
$\moduli(\quiver,\alpha,\theta)$ and a morphism 
\begin{equation}\label{eq:quotient} \pi:\rep(\quiver,\alpha)^{\theta-ss}\to\moduli(\quiver,\alpha,\theta)\end{equation} 
which is a {\it coarse moduli space} for families of $\theta$-semistable $\alpha$-dimensional representations of $\quiver$ up to S-equivalence. 
A polynomial function $f$ on $\rep(\quiver,\alpha)$ is a {\it relative invariant of weight} $\theta$ if 
$f(g\cdot R)=(\prod_{v\in \quiver_0}\det(g(v))^{\theta(v)})f(R)$ holds for all $g\in GL(\alpha)$ and $R\in\rep(\quiver,\alpha)$. 
The relative invariants of weight $\theta$ constitute a subspace $\coord(\rep(\quiver,\alpha))_{\theta}$ in the coordinate ring $\coord(\rep(\quiver,\alpha))$ of the affine space 
$\rep(\quiver,\alpha)$.  In fact  $\coord(\rep(\quiver,\alpha))_{\theta}$ is a finitely generated module over the algebra $\coord(\rep(\quiver,\alpha))^{GL(\alpha)}$ of polynomial 
$GL(\alpha)$-invariants on $\rep(\quiver,\alpha)$ (generators of this latter algebra are described in \cite{lebruyn-procesi}). 
Now a quasiprojective variety $\moduli(\quiver,\alpha,\theta)$ is defined as the {\it projective spectrum}  
\[\moduli(\quiver,\alpha,\theta)=\proj(\bigoplus_{n=0}^{\infty}\coord(\rep(\quiver,\alpha))_{n\theta})\]  
of the graded algebra $\bigoplus_{n=0}^{\infty}\coord(\rep(\quiver,\alpha))_{n\theta}$.  
A notable special case is that of the zero weight. Then the moduli space  $\moduli(\quiver,\alpha,0)$ is the affine variety whose coordinate ring is the subalgebra of $GL(\alpha)$-invariants in $\coord(\rep(\quiver,\alpha))$. This was studied in \cite{lebruyn-procesi} before the introduction of the case of general weights in \cite{king}. 
Its points are in a natural bijection with the isomorphism classes of semisimple representations of $\quiver$ with dimension vector $\alpha$. For a quiver with no oriented cycles, $\moduli(\quiver,\alpha,0)$ is just a point, and it is more interesting for quivers containing oriented cycles.

Let us turn to the special case when $\alpha(v)=1$ for all $v\in\quiver_0$; we simply write $\rep(\quiver)$ and $\moduli(\quiver,\theta)$  instead of $\rep(\quiver,\alpha)$ and 
$\moduli(\quiver,\alpha,\theta)$. 
When $\rep(\quiver)^{\theta-ss}$ is non-empty, $\moduli(\quiver,\theta)$ is a quasiprojective toric variety with torus 
$\pi(\{x\in\rep(\quiver)\mid x(a)\neq 0\  \forall a\in\quiver_1\}) = \pi((\mc^\times)^{\quiver_1})$. On the other hand it is well known (see Proposition~\ref{prop:normal} below) that $\polytope(\quiver,\theta)$ is a lattice polyhedron in the sense of Definition 7.1.3 in \cite{cox-little-schenck}. Denote by $X_{\polytope(\quiver,\theta)}$ the toric variety belonging to the normal fan of $\polytope(\quiver,\theta)$, see for example Theorem 7.1.6 in \cite{cox-little-schenck}. 

\begin{proposition}\label{prop:hille} 
We have 
\[\moduli(\quiver,\theta)\cong X_{\polytope(\quiver,\theta)}.\]
\end{proposition}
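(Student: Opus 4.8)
Proposition~\ref{prop:hille} asserts an isomorphism between a GIT-theoretic object, $\moduli(\quiver,\theta)$, and a polytope-theoretic object, $X_{\polytope(\quiver,\theta)}$. My strategy is to show that both sides are the toric variety $\proj$ of the same graded semigroup algebra. On the polytope side, since $\polytope(\quiver,\theta)$ is a normal lattice polyhedron (invoking Proposition~\ref{prop:normal}, stated to appear below), the associated toric variety $X_{\polytope(\quiver,\theta)}$ is by standard toric geometry (e.g.\ Cox--Little--Schenck, Theorem~7.1.6, and the $\proj$ construction for normal polyhedra) the $\proj$ of the graded ring $\bigoplus_{n\ge 0}\mc[\,n\polytope\cap\mz^{\quiver_1}\,]$, where the degree-$n$ piece is spanned by lattice points of the $n$-th dilate.

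Let me explain the plan for the GIT side. By definition $\moduli(\quiver,\theta)=\proj(\bigoplus_{n=0}^{\infty}\coord(\rep(\quiver))_{n\theta})$. So the core of the proof is to identify the graded algebra of relative invariants $\bigoplus_{n\ge 0}\coord(\rep(\quiver))_{n\theta}$ with the graded semigroup algebra of lattice points in the dilates of $\polytope(\quiver,\theta)$. First I would write down the relative invariants explicitly: for the thin case $\alpha\equiv 1$, we have $\rep(\quiver)=\mc^{\quiver_1}$ with coordinate functions $x(a)$, the group is the torus $(\mc^\times)^{\quiver_0}$ acting by $g\cdot x=(g(a^+)g(a^-)^{-1}x(a))$, and a monomial $\prod_a x(a)^{m(a)}$ is a relative invariant of weight $n\theta$ precisely when for every vertex $v$ the exponent of $g(v)$ matches, i.e.\ $\sum_{a^+=v}m(a)-\sum_{a^-=v}m(a)=n\theta(v)$, together with $m\ge \mathbf{0}$. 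These are exactly the conditions defining $n\polytope(\quiver,\theta)\cap\mz^{\quiver_1}$, since the defining equations of $\polytope(\quiver,\theta)$ scale linearly with $n$ and the inequality $\mathbf{0}\le x$ is homogeneous. Thus I would establish a degree-preserving $\mc$-basis bijection between $\coord(\rep(\quiver))_{n\theta}$ and the lattice points of $n\polytope(\quiver,\theta)$, and check that multiplication of relative invariants corresponds to addition of lattice points (Minkowski-type additivity of exponent vectors), giving an isomorphism of graded algebras.

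Applying $\proj$ to this isomorphism of graded algebras then yields $\moduli(\quiver,\theta)\cong X_{\polytope(\quiver,\theta)}$, completing the argument; I would also note that the torus description stated in the text matches, since the open dense torus in both realizations is the image of $(\mc^\times)^{\quiver_1}$. The main obstacle I anticipate is not the monomial bookkeeping but the matching of the \emph{grading and the $\proj$ construction for a possibly non-compact polyhedron}: when $\quiver$ has oriented cycles, $\polytope(\quiver,\theta)$ is an unbounded polyhedron rather than a polytope, so one must use the recession-cone–sensitive version of $\proj$ (as in Cox--Little--Schenck §7.1 for lattice polyhedra) and verify that normality guarantees the semigroup $\bigsqcup_{n\ge 0}(n\polytope\cap\mz^{\quiver_1})$ is generated in degree one, so that the $\proj$ genuinely reproduces $X_{\polytope(\quiver,\theta)}$ and not merely its normalization. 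Once the grading and generation issues are handled via the cited normality result, the identification is forced.
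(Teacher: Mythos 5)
Your proposal is correct and takes essentially the same approach as the paper's proof: the paper likewise observes that the lattice points of $\polytope(\quiver,n\theta)$ form a monomial basis of the relative invariants $\coord(\rep(\quiver))_{n\theta}$, so that both $\moduli(\quiver,\theta)$ and $X_{\polytope(\quiver,\theta)}$ are the projective spectrum of the same graded algebra, citing Proposition 7.1.13 of Cox--Little--Schenck for the polyhedral side. Your closing worry about generation in degree one is not actually needed for the isomorphism itself, since that cited result applies to the full cone semigroup algebra (whose degree-$n$ part is spanned by $\polytope(\quiver,n\theta)\cap\mz^{\quiver_1}$, with the recession cone $\polytope(\quiver,0)$ in degree zero) without assuming normality of the polyhedron.
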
 

\begin{proof} For quivers with no oriented cycles this is explained in \cite{altmann-hille} using a description of the fan of $\moduli(\quiver,\theta)$ in \cite{hille:canada}. An alternative explanation is the following: 
the lattice points in $\polytope(\quiver,n\theta)$ correspond bijectively to a $\mc$-basis in $\coord(\rep(\quiver))_{n\theta}$, namely assign to 
$m\in\polytope(Q,n\theta)\cap\mz^{\quiver_1}$ the function $R\mapsto R^m:=\prod_{a\in\quiver_1} R(a)^{m(a)}$. 
Now $X_{\polytope(\quiver,\theta)}$ is the projective spectrum of $\bigoplus_{n=0}^\infty\coord(\rep(\quiver))_{n\theta}$ (see Proposition 7.1.13 in \cite{cox-little-schenck}), 
just like $\moduli(\quiver,\theta)$.  
\end{proof} 

A more explicit description of $\moduli(\quiver,\theta)$ is possible thanks to normality of quiver polyhedra: 

\begin{proposition}\label{prop:normal} 
(i) Denote by $\quiver^1,\dots,\quiver^t$ the maximal subquivers of $\quiver$ that contain no oriented cycles. Then $\polytope(\quiver,\theta)\cap\mz^{\quiver_1}$ has a Minkowski sum decomposition 
\begin{equation}\label{eq:recessioncone} 
\polytope(\quiver,\theta)\cap\mz^{\quiver_1}=\polytope(\quiver,0)\cap\mz^{\quiver_1}+\bigcup_{i=1}^t\polytope(\quiver^i,\theta)\cap\mz^{\quiver_1}.\end{equation}

(ii) The quiver polyhedron $\polytope(\quiver,\theta)$ is a normal lattice polyhedron. 
\end{proposition}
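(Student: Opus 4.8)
The plan is to establish (i) first and then to derive the normality statement (ii) from it together with the already--cited normality of (bounded) flow polytopes. For (i) the inclusion $\supseteq$ is immediate: if $y\in\polytope(\quiver^i,\theta)\cap\mz^{\quiver_1}$ and $z\in\polytope(\quiver,0)\cap\mz^{\quiver_1}$, then $y+z\ge\mathbf{0}$ is integral, and at every vertex its net flow is $\theta+0=\theta$, so $y+z\in\polytope(\quiver,\theta)\cap\mz^{\quiver_1}$. The substance is the reverse inclusion, which I would prove by a flow--decomposition (cycle--cancellation) argument.

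For $\subseteq$, take $x\in\polytope(\quiver,\theta)\cap\mz^{\quiver_1}$. While the support $\support(x)=\{a\mid x(a)>0\}$ contains an oriented cycle $C$, I replace $x$ by $x-\mathbf{1}_C$, where $\mathbf{1}_C$ is the indicator vector of the arrows of $C$. Each step keeps the vector nonnegative and integral (since $x(a)\ge 1$ on $C$), leaves the net flow at every vertex unchanged (an oriented cycle has zero net flow everywhere), and strictly decreases $\sum_a x(a)$ by $|C|\ge 1$; hence the process terminates. The resulting vector $y$ is a nonnegative integral $\theta$--flow whose support is acyclic, so $\support(y)$ lies in the arrow set of some maximal subquiver $\quiver^i$ containing no oriented cycle, giving $y\in\polytope(\quiver^i,\theta)\cap\mz^{\quiver_1}$. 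Finally $z:=x-y$ is a sum of indicator vectors of oriented cycles, hence a nonnegative integral circulation, i.e. $z\in\polytope(\quiver,0)\cap\mz^{\quiver_1}$, and $x=y+z$ is the required decomposition.

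For (ii) I would first record that the relevant normality statement is the integer decomposition property relative to the recession cone $\polytope(\quiver,0)$ of $\polytope(\quiver,\theta)$, obtained by passing to the cone over the polyhedron (cf. Definition 7.1.3 and Proposition 7.1.13 in \cite{cox-little-schenck}): for every $n\ge 1$ each lattice point of $n\,\polytope(\quiver,\theta)=\polytope(\quiver,n\theta)$ must be a sum of $n$ lattice points of $\polytope(\quiver,\theta)$ plus a lattice point of $\polytope(\quiver,0)$. (That $\polytope(\quiver,\theta)$ is a lattice polyhedron follows from total unimodularity of the incidence matrix of $\quiver$, and was already recorded above.) Given $m\in\polytope(\quiver,n\theta)\cap\mz^{\quiver_1}$, I apply (i) with $\theta$ replaced by $n\theta$ --- noting that the maximal acyclic subquivers $\quiver^i$ and the recession cone depend only on $\quiver$ --- to write $m=y+z$ with $y\in\polytope(\quiver^i,n\theta)\cap\mz^{\quiver_1}$ and $z\in\polytope(\quiver,0)\cap\mz^{\quiver_1}$. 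Since $\quiver^i$ has no oriented cycles, $\polytope(\quiver^i,\theta)$ is a flow polytope and hence normal (Theorem 13.14 in \cite{schrijver}); as $\polytope(\quiver^i,n\theta)=n\,\polytope(\quiver^i,\theta)$, normality yields $y=y_1+\dots+y_n$ with $y_j\in\polytope(\quiver^i,\theta)\cap\mz^{\quiver_1}\subseteq\polytope(\quiver,\theta)\cap\mz^{\quiver_1}$. Then $m=y_1+\dots+y_n+z$ is the desired decomposition.

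The main obstacle is the reverse inclusion in (i): one must argue that an arbitrary nonnegative integral $\theta$--flow can be stripped down to an acyclic one by subtracting integral circulations, while controlling nonnegativity, integrality, and termination of the cycle--cancellation. Once this is in place, (ii) becomes a formal reduction to the acyclic case plus the known normality of flow polytopes, and the only points needing care are the identity $n\,\polytope(\quiver,\theta)=\polytope(\quiver,n\theta)$ and the fact that the data $\quiver^i$ and $\polytope(\quiver,0)$ are insensitive to rescaling $\theta$.
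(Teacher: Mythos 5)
Your proof is correct and follows essentially the same route as the paper: part (i) by cycle--cancellation on the support (the paper subtracts $\lambda\varepsilon_C$ with $\lambda$ the minimal coordinate along the cycle and inducts on the support size, while you subtract $\mathbf{1}_C$ and induct on $\sum_a x(a)$ --- a cosmetic difference), and part (ii) by applying (i) with weight $n\theta$ and reducing to the normality of the flow polytopes $\polytope(\quiver^i,\theta)$ via Theorem 13.14 of \cite{schrijver}. The only slight divergence is bookkeeping: for the lattice--polyhedron property you invoke total unimodularity of the incidence matrix, whereas the paper deduces it from the real version of (i) (vertices of $\polytope(\quiver,\theta)$ lie among the vertices of the $\polytope(\quiver^i,\theta)$, which are integral by Theorem 13.11 of \cite{schrijver}, and $\polytope(\quiver,0)$ is a rational strongly convex recession cone); these are the same standard fact, and your formulation of normality (sum of $n$ lattice points of the polyhedron plus a lattice point of the recession cone) is equivalent to the paper's $k$-fold Minkowski sum condition since the recession cone summand can be absorbed into one of the polyhedron summands.
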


\begin{proof} (i)  By the {\it support} of  $x\in \mr^{\quiver_1}$ we mean the set $\{a\in\quiver_1\mid x(a)\neq 0\}\subseteq \quiver_1$. 
It is obvious that $\polytope(\quiver,\theta)\cap\mz^{\quiver_1}$ contains the set on the right hand side of \eqref{eq:recessioncone}. 
To show the reverse inclusion take an $x\in\polytope(\quiver,\theta)\cap \mz^{\quiver_1}$. If its support contains no oriented cycles, then $x\in\polytope(\quiver^i,\theta)$ for some $i$. 
Otherwise take a minimal oriented cycle $C\subseteq\quiver_1$ in the support of $x$. Denote by $\varepsilon_C\in\mr^{\quiver_1}$ the characteristic function of $C$, and denote by $\lambda$ the minimal coordinate of $x$ along the cycle $C$. Then $\lambda\varepsilon_C\in\polytope(\quiver,0)$ and 
$y:=x-\lambda\varepsilon_C\in \polytope(\quiver,\theta)$. Moreover, $y$ has strictly smaller support than $x$. By induction on the size of the support we are done. 

(ii) The same argument as in (i) yields 
$\polytope(\quiver,\theta)=\polytope(\quiver,0)+\bigcup_{i=1}^t\polytope(\quiver^i,\theta)$. So $\polytope(\quiver,0)$ is the {\it recession cone}  of $\polytope(\quiver,\theta)$, 
and the set of vertices of $\polytope(\quiver,\theta)$ is contained in the union of the vertex sets of $\polytope(\quiver^i,\theta)$. As we pointed out before, the vertices of 
$\polytope(\quiver^i,\theta)$ belong to $\mz^{\quiver_1}$ by Theorem 13.11 in \cite{schrijver}, whereas the cone $\polytope(\quiver,0)$ is obviously rational and strongly convex. This shows that $\polytope(\quiver,\theta)$ is a lattice polyhedron in the sense of Definition 7.1.3 in \cite{cox-little-schenck}. 
For normality we need to show that for all positive integers $k$ we have $\polytope(\quiver,k\theta)\cap \mz^{\quiver_1}=k(\polytope(\quiver,\theta)\cap\mz^{\quiver_1})$ (the Minkowski sum of $k$ copies of $\polytope(\quiver,\theta)\cap\mz^{\quiver_1}$), see Definition 7.1.8 in \cite{cox-little-schenck}. Flow polytopes are normal by Theorem  13.14 in \cite{schrijver}, hence the $\polytope(\quiver^i,\theta)$ are normal for  $i=1,\dots,t$.  
So by (i)  we have $\polytope(\quiver,k\theta)\cap \mz^{\quiver_1}=\polytope(\quiver,0)\cap\mz^{\quiver_1}+\bigcup_{i=1}^t(\polytope(\quiver^i,k\theta)\cap \mz^{\quiver_1})
=\polytope(\quiver,0)\cap \mz^{\quiver_1}+\bigcup_{i=1}^tk(\polytope(\quiver^i,\theta)\cap \mz^{\quiver_1} )
\subseteq k(\polytope(\quiver,0)+\bigcup_{i=1}^t\polytope(\quiver^i,\theta)\cap\mz^{\quiver1})$.  
\end{proof} 

Let $C_1,\dots,C_r$ be the minimal oriented cycles (called also primitive cycles) in $\quiver$. Then their characteristic  functions $\varepsilon_{C_1},\dots,\varepsilon_{C_r}$ constitute a Hilbert basis in the monoid $\polytope(\quiver,0)\cap \mz^{\quiver_1}$. Enumerate the elements in 
$\{m,\varepsilon_{C_j}+m\mid m\in \bigcup_{i=1}^t\polytope(\quiver^i,\theta),j=1,\dots,r\}$ as 
$m_0,m_1,\dots,m_d$. For a lattice point $m\in \polytope(\quiver,\theta)\cap\mz^{\quiver_1}$ denote by $x^m:\rep(\quiver)\to \mc$ the function 
$x\mapsto \prod_{a\in\quiver_1} R(a)^{m(a)}$. Consider the map 
\begin{equation}\label{eq:rho}\rho:\rep(\quiver)^{\theta-ss}\to \mathbb{P}^d,\quad x\mapsto (x^{m_0}:\dots:x^{m_d}).\end{equation}

\begin{proposition}\label{prop:realizationofmoduli} 
$\moduli(\quiver,\theta)$ can be identified with the locally closed subset $\mathrm{Im}(\rho)$ in $\mathbb{P}^d$. 
\end{proposition}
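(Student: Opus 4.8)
The plan is to show that $\rho$ factors through $\pi$ and induces an isomorphism onto its image, by comparing $\rho$ with the embedding defining $X_{\polytope(\quiver,\theta)}$ that we already understand via Proposition~\ref{prop:hille}. The starting point is the identification $\moduli(\quiver,\theta)=\proj(\bigoplus_{n\ge 0}\coord(\rep(\quiver))_{n\theta})$ from Section~\ref{sec:repsofquivers}, together with the basis of $\coord(\rep(\quiver))_{n\theta}$ furnished by the lattice points of $\polytope(\quiver,n\theta)$ (the functions $x\mapsto x^m$). By normality of the quiver polyhedron (Proposition~\ref{prop:normal}(ii)), the graded algebra is generated in degree one, so $\moduli(\quiver,\theta)$ is the image of the rational map defined by the degree-one piece, i.e.\ by the monomials $x^m$ with $m\in\polytope(\quiver,\theta)\cap\mz^{\quiver_1}$.

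Next I would explain why the finite list $m_0,\dots,m_d$ built from $\bigcup_i\polytope(\quiver^i,\theta)$ and its translates by the primitive cycles suffices to define the same map. By Proposition~\ref{prop:normal}(i), every lattice point of $\polytope(\quiver,\theta)$ is a sum of a lattice point of one of the $\polytope(\quiver^i,\theta)$ and a non-negative combination of the $\varepsilon_{C_j}$; since the $\varepsilon_{C_j}$ form a Hilbert basis of $\polytope(\quiver,0)\cap\mz^{\quiver_1}$, the listed monomials generate the semigroup algebra $\bigoplus_n\coord(\rep(\quiver))_{n\theta}$ as a $\mc$-algebra. Hence the $\proj$ of this algebra is precisely the closure of the image of the map sending $x$ to the tuple of these monomials, and on the semistable locus this closure is attained, giving $\moduli(\quiver,\theta)\cong\mathrm{Im}(\rho)$ as a locally closed subset of $\mathbb{P}^d$.

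The main obstacle I anticipate is bookkeeping at the boundary between the projective and the quasiprojective (non-compact) situation: when $\quiver$ has oriented cycles the polyhedron $\polytope(\quiver,\theta)$ is unbounded, so $\moduli(\quiver,\theta)$ is only quasiprojective, and one must check that $\mathrm{Im}(\rho)$ is exactly the locally closed subset carved out, not merely its closure. I would handle this by separating two contributions: the recession cone $\polytope(\quiver,0)$ (spanned by the cycles $C_1,\dots,C_r$) governs the torus directions coming from the monomials $R(a)$ supported on cycles, while the polytope parts $\polytope(\quiver^i,\theta)$ govern the bounded directions. One verifies that a point $x\in\rep(\quiver)$ lies in $\rep(\quiver)^{\theta-ss}$ exactly when the corresponding tuple $(x^{m_0}:\dots:x^{m_d})$ is a well-defined point of $\mathbb{P}^d$, i.e.\ not all coordinates vanish, which matches the GIT semistability condition. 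Finally I would confirm that $\rho$ is constant on $GL(1)^{\quiver_0}=(\mc^\times)^{\quiver_0}$-orbits (the monomials $x^m$ are relative invariants of weight $n\theta$, so their ratios are genuine invariants), so that $\rho$ descends to an injection of $\moduli(\quiver,\theta)$ into $\mathbb{P}^d$, completing the identification.
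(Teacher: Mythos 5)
Your overall route is the same as the paper's: show $\rho$ is $GL(1,\dots,1)$-invariant so that it factors as $\mu\circ\pi=\rho$, then use Proposition~\ref{prop:normal} and the $\proj$ description of $\moduli(\quiver,\theta)$ to conclude that $\mu$ is an isomorphism onto $\mathrm{Im}(\rho)$. But two of your intermediate claims are false precisely in the case that makes the statement nontrivial, namely when $\quiver$ has oriented cycles (when it has none, your paragraphs are essentially fine, but then the list $m_0,\dots,m_d$ is just all lattice points of the polytope and there is little to prove). First, $A:=\bigoplus_{n\ge 0}\coord(\rep(\quiver))_{n\theta}$ is \emph{not} generated in degree one: its degree-zero part is the whole invariant ring $\coord(\moduli(\quiver,0))$, generated by the cycle monomials $x^{\varepsilon_{C_j}}$, and these are not polynomials in elements of positive degree. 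Second, the listed monomials do not generate $A$ as a $\mc$-algebra: for $m\in\polytope(\quiver^i,\theta)\cap\mz^{\quiver_1}$ the point $m+2\varepsilon_{C_1}$ lies in $\polytope(\quiver,\theta)\cap\mz^{\quiver_1}$, so $x^{m+2\varepsilon_{C_1}}$ is a degree-one element of $A$; since every listed monomial has degree one, the degree-one part of the subalgebra they generate is only their linear span, which misses this element. Consequently your conclusion that ``$\proj$ of this algebra is precisely the closure of the image'' also fails: with oriented cycles $\proj A\cong\moduli(\quiver,\theta)$ is quasiprojective but not projective, so it cannot equal the projective variety $\overline{\mathrm{Im}(\rho)}$; and ``on the semistable locus this closure is attained'' would force $\mathrm{Im}(\rho)$ to be closed, which is exactly what is false here.

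The missing idea --- the actual content of ``deduce from Proposition~\ref{prop:normal} by the Proj construction'' --- is local, and it is the only place where the shifted points $\varepsilon_{C_j}+m$ matter (your argument never uses them). By Proposition~\ref{prop:normal}, every lattice point of $\polytope(\quiver,n\theta)$ is a cycle part plus $n$ points from the $\polytope(\quiver^i,\theta)$; hence $\proj A$ is covered by the charts $D_+(x^m)$ with $m\in\bigcup_i\polytope(\quiver^i,\theta)\cap\mz^{\quiver_1}$, and on each such chart the coordinate ring $(A[1/x^m])_0$ is generated by ratios of \emph{listed} monomials, because $x^{\varepsilon_{C_j}}=x^{\varepsilon_{C_j}+m}/x^m$ and $x^{m'}/x^m$ is such a ratio for every bounded-part $m'$. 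This chart computation is what makes $\mu$ an isomorphism onto $\mathrm{Im}(\rho)$ (equivalently an open immersion into $\overline{\mathrm{Im}(\rho)}$). It does not follow, as your last paragraph suggests, from the $x^{m_i}$ being relative invariants: invariance only yields a well-defined morphism $\mu$, while injectivity of $\mu$ --- and the stronger fact that it is an isomorphism onto its image rather than a bijective morphism --- is exactly the separation/generation statement above. What is correct and worth keeping is your observation that $x\in\rep(\quiver)$ is $\theta$-semistable if and only if some listed coordinate is nonzero (decompose a nonvanishing relative invariant of weight $n\theta$ via Proposition~\ref{prop:normal} to produce a nonvanishing listed one); this identifies $\mathrm{Im}(\rho)$ with the union of the charts above, hence shows it is open in its closure, i.e.\ locally closed.
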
 

\begin{proof}  The morphism $\rho$ is $GL(1,\dots,1)$-invariant, hence it factors through the quotient morphism \eqref{eq:quotient}, so there exists a morphism 
$\mu:\moduli(\quiver,\theta)\to \mathrm{Im}(\rho)$ with $\mu\circ\pi=\rho$. One can deduce from Proposition~\ref{prop:normal} by the Proj construction of $\moduli(\quiver,\theta)$ that $\mu$ is an isomorphism. 
\end{proof} 

This shows also that there is a projective morphism $\moduli(\quiver,\theta)\to \moduli(\quiver,0)$. In particular, $\moduli(\quiver,\theta)$ is a projective variety if and only if $\quiver$ has no oriented cycles, i.e. if $\polytope(\quiver,\theta)$ is a polytope.


\section{Contractable arrows}\label{sec:reductions}

Throughout this section $\quiver$ stands for a quiver  and $\theta\in\mz^{\quiver_0}$ for a weight such that $\polytope(\quiver,\theta)$ is non-empty. 
For an undirected graph $\Gamma$ we set
$\chi(\Gamma):=|\Gamma_1|-|\Gamma_0|+\chi_0(\Gamma)$, where $\Gamma_0$ is the set of vertices, $\Gamma_1$ is the set of edges in $\Gamma$, and $\chi_0(\Gamma)$ is the number of connected components of $\Gamma$.   Define $\chi(\quiver):=\chi(\Gamma)$ and $\chi_0(\quiver):=\chi_0(\Gamma)$ where $\Gamma$ is the underlying graph of $\quiver$, and we say that $\quiver$ is {\it connected} if $\Gamma$ is connected, i.e. if $\chi_0(\quiver)=1$. 
 Denote by $\floweqs:\mr^{\quiver_1}\to\mr^{\quiver_0}$ the map given by 
\begin{equation}\label{eq:flow} 
\floweqs(x)(v)=\sum_{a^+=v}x(a)-\sum_{a^-=v}x(a)\qquad (v\in\quiver_0). \end{equation}
By definition we have $\polytope(\quiver,\theta)=\floweqs^{-1}(\theta)\cap \mr_{\ge 0}^{\quiver_1}$. 
It is well known that the codimension in $\mr^{\quiver_0}$ of the image of $\floweqs$ equals $\chi_0(\quiver)$, hence 
$\dim_{\mr}(\floweqs^{-1}(\theta))=\chi(\quiver)$ for any $\theta\in \floweqs(\mr^{\quiver_1})$, implying 
that $\dim(\polytope(\quiver,\theta))\le \chi(\quiver)$, where by the {\it dimension of a polyhedron} we mean the dimension of its affine span. 

We say that we {\it contract an arrow} $a\in\quiver_1$ which is not a loop when we pass to the pair $(\hat\quiver,\hat\theta)$, where $\hat\quiver$ is obtained from $\quiver$ by 
removing $a$ and glueing its endpoints $a^-,a^+$ to a single vertex $v\in\hat\quiver_0$, and setting $\hat\theta(v):=\theta(a^-)+\theta(a^+)$ whereas $\hat\theta(w)=\theta(w)$ for all vertices $w\in\hat\quiver_0\setminus \{v\}=\quiver_0\setminus\{a^-,a^+\}$. 

\begin{definition}\label{def:contractable} {\rm  Let $\quiver$ be a quiver, $\theta \in \mz^{\quiver_0}$ a weight such that $\polytope(\quiver,\theta)$ is non-empty. 
\begin{itemize}
\item[(i)]  An arrow $a\in\quiver_1$ is said to be {\it removable} if $\polytope(\quiver,\theta)$ is integral-affinely equivalent to $\polytope(\quiver',\theta)$, where $\quiver'$ is obtained from $\quiver$ by removing the arrow $a$: $\quiver'_0=\quiver_0$ and $\quiver'_1=\quiver_1\setminus \{a\}$.  
\item[(ii)] An arrow $a\in\quiver_1$ is said to be {\it contractable} if 
$\polytope(\quiver,\theta)$ is integral-affinely equivalent to $\polytope(\hat\quiver,\hat\theta)$, where $(\hat\quiver,\hat\theta)$ is obtained from $(\quiver,\theta)$ by 
contracting the arrow $a$. 
\item[(iii)] The pair $(\quiver,\theta)$ is called {\it tight} if there is no removable or contractable arrow in $\quiver_1$.  
\end{itemize} }
\end{definition}  

An immediate corollary of Definition~\ref{def:contractable} is the following statement: 

\begin{proposition}\label{prop:tightsufficient} 
Any quiver polyhedron $\polytope(\quiver,\theta)$ is integral-affinely equivalent to some $\polytope(\quiver',\theta')$, where $(\quiver',\theta')$ is tight. Moreover, $(\quiver',\theta')$  is obtained from $(\quiver,\theta)$ by successively removing or contracting arrows. 
\end{proposition}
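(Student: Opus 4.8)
The plan is to prove Proposition~\ref{prop:tightsufficient} by a descent argument on a suitable complexity measure of the pair $(\quiver,\theta)$, using the two reduction operations from Definition~\ref{def:contractable} as the inductive steps. The key observation is that each reduction \emph{strictly decreases} a natural nonnegative integer invariant while preserving the integral-affine equivalence class of the quiver polyhedron. The natural choice is the number of arrows $|\quiver_1|$: removing an arrow decreases $|\quiver_1|$ by one, and contracting a (non-loop) arrow also decreases $|\quiver_1|$ by one (it deletes $a$ and merges two vertices, lowering both $|\quiver_1|$ and $|\quiver_0|$ by one). Since $|\quiver_1|$ is a nonnegative integer, only finitely many reductions can be performed before we reach a pair with no removable or contractable arrow, i.e.\ a tight pair.

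First I would fix the pair $(\quiver,\theta)$ and check that both operations are well-defined on it: if $\polytope(\quiver,\theta)$ is non-empty and $a$ is removable, then $\polytope(\quiver',\theta)$ is the relevant integral-affinely equivalent polyhedron and remains non-empty (equivalence preserves non-emptiness); similarly for a contractable arrow producing $\polytope(\hat\quiver,\hat\theta)$. This matters because the statement of the proposition presupposes $\polytope(\quiver,\theta)$ non-empty, and we must ensure each intermediate pair again satisfies this hypothesis so the definitions apply at every stage. Then I would set up the induction on $|\quiver_1|$: if $(\quiver,\theta)$ is already tight there is nothing to do; otherwise there is an arrow $a$ that is removable or contractable, and applying the corresponding reduction yields a new pair $(\quiver'',\theta'')$ with $\polytope(\quiver,\theta)$ integral-affinely equivalent to $\polytope(\quiver'',\theta'')$ and with $|\quiver''_1|<|\quiver_1|$.

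The inductive step needs one transitivity remark: integral-affine equivalence of lattice polyhedra is an equivalence relation (composing the affine isomorphisms $\varphi$ from Definition~\ref{def:isomorphicpolytopes} and their inverses preserves properties (i) and (ii)). Hence the equivalence $\polytope(\quiver,\theta)\sim\polytope(\quiver'',\theta'')$ obtained from the single reduction, composed with the equivalence $\polytope(\quiver'',\theta'')\sim\polytope(\quiver',\theta')$ furnished by the induction hypothesis applied to the strictly smaller pair $(\quiver'',\theta'')$, gives $\polytope(\quiver,\theta)\sim\polytope(\quiver',\theta')$ with $(\quiver',\theta')$ tight. The final clause of the proposition --- that $(\quiver',\theta')$ is reached by successively removing or contracting arrows --- is then automatic, since every step in the induction is itself one such reduction.

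I do not expect a genuine obstacle here; the content is essentially bookkeeping once the monovariant is identified. The only subtlety to state carefully is that contraction is only permitted for arrows that are not loops (as built into the definition of contracting an arrow), so when invoking tightness one must confirm that a loop cannot be contractable and is handled correctly --- but this is already encoded in Definition~\ref{def:contractable}, since contractability is defined only via the operation on non-loop arrows. Thus the mildly delicate point is purely definitional: verifying that "no removable or contractable arrow" is exactly the terminal condition of the descent, so that termination of the monovariant is equivalent to reaching a tight pair.
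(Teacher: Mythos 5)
Your proof is correct and matches the paper's intent exactly: the paper states this proposition as ``an immediate corollary'' of Definition~\ref{def:contractable}, with the implicit argument being precisely your descent on $|\quiver_1|$ combined with transitivity of integral-affine equivalence. Your write-up simply makes explicit the bookkeeping (monovariant, preservation of non-emptiness, transitivity) that the authors considered too obvious to record.
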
 

\begin{remark}\label{remark:altmann-straten-tight} 
{\rm A pair $(\quiver,\theta)$ is tight if and only if all its connected components are $\theta$-tight in the sense of Definition 12 of \cite{altmann-straten}; 
this follows from Lemma 7, Corollary 8, and Lemma 13 in \cite{altmann-straten}. These results imply also Corollary~\ref{cor:tightfacets} below, for which we give a direct derivation from Definition~\ref{def:contractable}.}
\end{remark}

\begin{lemma}\label{lemma:contractable} 
\begin{itemize} 
\item[(i)] The arrow $a$ is removable if and only if $x(a)=0$ for all $x\in\polytope(\quiver,\theta)$.  
\item[(ii)] The arrow $a$ is contractable if and only if 
in the affine space $\floweqs^{-1}(\theta)$ the halfspace $\{x\in \floweqs^{-1}(\theta)\mid x(a)\geq 0\}$ contains the polyhedron 
$\{x\in\floweqs^{-1}(\theta)\mid x(b)\geq 0 \quad \forall b\in\quiver_1\setminus\{a\}\}$.  
\end{itemize}
\end{lemma}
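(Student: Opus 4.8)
The plan is to reduce both parts to a comparison of $\polytope(\quiver,\theta)$ with the second polyhedron along the single coordinate direction indexed by $a$. Write $\pi:\mr^{\quiver_1}\to\mr^{\quiver_1\setminus\{a\}}$ for the projection forgetting the coordinate $x(a)$, and recall $\polytope(\quiver,\theta)=\floweqs^{-1}(\theta)\cap\mr_{\ge0}^{\quiver_1}$. The coordinate $x(a)$ enters the flow equations only at $a^-$ and $a^+$, and this local behaviour is what controls everything.

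For (i) the key observation is that extending $y\in\mr^{\quiver_1\setminus\{a\}}$ by $x(a)=0$ identifies $\polytope(\quiver',\theta)$ with the slice $\polytope(\quiver,\theta)\cap\{x(a)=0\}$: setting $x(a)=0$ removes no contribution from the flow equations, so it leaves exactly the defining equations and inequalities of $\polytope(\quiver',\theta)$, and the identification is affine linear and lattice preserving in both directions. This slice is the face of $\polytope(\quiver,\theta)$ cut out by the supporting hyperplane $\{x(a)=0\}$ of the halfspace $\{x(a)\ge0\}$. For the ``if'' direction, if $x(a)$ vanishes identically on $\polytope(\quiver,\theta)$ then this face is all of $\polytope(\quiver,\theta)$, so $\pi$ itself is the required integral-affine equivalence and $a$ is removable. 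For the converse I would use dimension: an integral-affine equivalence preserves dimension, so if $a$ is removable then the face $\polytope(\quiver,\theta)\cap\{x(a)=0\}$ has the same dimension as $\polytope(\quiver,\theta)$; a proper face of a polyhedron has strictly smaller dimension, hence the face is improper, i.e. $\polytope(\quiver,\theta)\subseteq\{x(a)=0\}$. This direction is clean precisely because one compares $\polytope(\quiver,\theta)$ with a \emph{face}, where dimension alone is decisive.

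For (ii) I would first check that $\pi$ behaves well on the ambient affine spaces. Because $a$ is not a loop (so $a^-\ne a^+$), the restriction of $\pi$ to $\floweqs^{-1}(\theta)$ is injective, and the choice $\hat\theta(v)=\theta(a^-)+\theta(a^+)$ is exactly what makes it surjective onto $\hat\floweqs^{-1}(\hat\theta)$: given a point there, the two flow equations at $a^-$ and $a^+$ prescribe the \emph{same} value of $x(a)$ because their sum is the single equation at the glued vertex $v$. Thus $\pi$ restricts to an integral-affine isomorphism $\floweqs^{-1}(\theta)\to\hat\floweqs^{-1}(\hat\theta)$ carrying lattice to lattice. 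Under this isomorphism the relaxed polyhedron $P_a:=\{x\in\floweqs^{-1}(\theta)\mid x(b)\ge0\ \forall b\ne a\}$ is mapped onto $\polytope(\hat\quiver,\hat\theta)$, while $\polytope(\quiver,\theta)=P_a\cap\{x(a)\ge0\}$. The ``if'' direction is then immediate: if the halfspace $\{x(a)\ge0\}$ contains $P_a$, then $\polytope(\quiver,\theta)=P_a$, so $\pi$ restricts to an integral-affine equivalence $\polytope(\quiver,\theta)\to\polytope(\hat\quiver,\hat\theta)$ and $a$ is contractable.

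The main obstacle is the converse of (ii). Assuming $a$ contractable I have $\polytope(\quiver,\theta)\cong\polytope(\hat\quiver,\hat\theta)\cong P_a$ together with the inclusion $\polytope(\quiver,\theta)=P_a\cap\{x(a)\ge0\}\subseteq P_a$, and I must deduce $\polytope(\quiver,\theta)=P_a$, i.e. that $x(a)\ge0$ is redundant on $P_a$. Unlike part (i), here one compares $\polytope(\quiver,\theta)$ with a \emph{superset} obtained by relaxing one inequality, so dimension is not enough: a full-dimensional proper sub-polyhedron can still be integral-affinely equivalent to the ambient one. When $\quiver$ has no oriented cycles, so that $\polytope(\quiver,\theta)$ and $P_a$ are polytopes, I would finish with an invariant that is preserved under integral-affine equivalence but strictly monotone under proper enlargement, namely normalized volume (equivalently the leading Ehrhart coefficient): relaxing $x(a)\ge0$ nontrivially strictly increases it, contradicting $\polytope(\quiver,\theta)\cong P_a$. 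The genuinely delicate point I expect is the unbounded case, where $\polytope(\quiver,\theta)$ and $P_a$ share the recession cone $\polytope(\quiver,0)$ by Proposition~\ref{prop:normal} and the volume invariant no longer separates them; there abstract equivalence is a weaker condition, and one must exploit that the equivalence is compatible with the distinguished coordinate $x(a)$ — effectively that contractability is witnessed by the projection $\pi$ itself — reducing the comparison to the bounded parts before applying the volume argument. Controlling this reduction is where I would concentrate the effort.
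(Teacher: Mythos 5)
Your part (i) and the ``if'' direction of part (ii) are correct and follow the same route as the paper: the projection $\pi$ forgetting the coordinate $x(a)$ is injective on $\floweqs^{-1}(\theta)$ (the flow equation at $a^+$ recovers $x(a)$), it identifies the lattices, it carries $P_a:=\{x\in\floweqs^{-1}(\theta)\mid x(b)\ge 0\ \forall b\ne a\}$ onto $\polytope(\hat\quiver,\hat\theta)$, and redundancy of $x(a)\ge 0$ makes $\pi$ itself the required equivalence. Your face/dimension argument is a valid account of the converse of (i), which the paper dismisses as trivial. Your volume argument for the converse of (ii) in the bounded case is also correct, modulo one small repair: absence of oriented cycles in $\quiver$ does not by itself make $P_a$ bounded (contracting $a$ can create an oriented cycle, e.g.\ when $a$ has a parallel companion arrow); what makes $P_a$ a polytope there is the assumed equivalence $P_a\cong\polytope(\hat\quiver,\hat\theta)\cong\polytope(\quiver,\theta)$. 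Note that this volume step is entirely absent from the paper's proof, which jumps from ``$\pi$ maps $\polytope(\quiver,\theta)$ onto an integral-affinely equivalent copy contained in $\polytope(\hat\quiver,\hat\theta)$'' straight to the biconditional.

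The unbounded case, which you leave open, is a genuine gap, and it cannot be closed along the lines you sketch, because there the ``only if'' direction is false under the literal reading of Definition~\ref{def:contractable}. Concretely, let $\quiver$ be the two-cycle with arrows $a:u\to v$ and $b:v\to u$, and take $\theta(u)=1$, $\theta(v)=-1$. Then $\floweqs^{-1}(\theta)$ is the line $x(b)=x(a)+1$, so $\polytope(\quiver,\theta)=\{(t,t+1)\mid t\ge 0\}$ and $P_a=\{(t,t+1)\mid t\ge -1\}$, while contracting $a$ yields a single vertex carrying the loop $b$ with $\hat\theta=0$, so $\polytope(\hat\quiver,\hat\theta)=\mr_{\ge 0}$. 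The map $(t,t+1)\mapsto t$ is an integral-affine equivalence in the sense of Definition~\ref{def:isomorphicpolytopes}, so $a$ is contractable, yet $P_a\not\subseteq\{x\mid x(a)\ge 0\}$, so the criterion of the lemma fails; this is exactly your $[1,\infty)\subset[0,\infty)$ phenomenon, realized by a quiver polyhedron. Consequently the ``compatibility of the equivalence with the distinguished coordinate $x(a)$'' that you hoped to extract is not a consequence of the abstract definition, and no reduction to the bounded parts can supply it. The paper's own proof makes the same unjustified jump, tacitly equating contractability with surjectivity of $\pi$ onto $\polytope(\hat\quiver,\hat\theta)$. So the honest conclusion is: your proof is complete, and more careful than the paper's, in the polytope case; in the unbounded case the lemma only becomes true if one reads ``contractable'' operationally as ``$\pi$ maps $\polytope(\quiver,\theta)$ onto $\polytope(\hat\quiver,\hat\theta)$'' (which is how the paper's proof de facto treats it), and under that reading both directions are precisely the computation you already carried out.
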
 

\begin{proof} (i) is trivial. To prove (ii) denote by $\hat\quiver,\hat\theta$ the quiver and weight obtained by contracting $a$. Since the set of arrows of $\hat\quiver$ can be identified with $\hat\quiver_1=\quiver_1\setminus\{a\}$, we have the projection map $\pi:\floweqs^{-1}(\theta)\to \floweqs'^{-1}(\hat\theta)$ obtained by forgetting the coordinate $x(a)$. The equation 
\[x(a)=\theta(a^+)-\sum_{b\in\quiver_1\setminus\{a\},b^+=a^+}x(b)+\sum_{b\in\quiver_1\setminus\{a\},b^-=a^+}x(b)\] 
shows that $\pi$ is injective, hence it gives an affine linear isomorphism $\floweqs^{-1}(\theta)\cap\mz^{\quiver_1}$ and $\floweqs'^{-1}(\hat\theta)\cap\mz^{\hat\quiver_1}$, and maps injectively the lattice polyhedron $\polytope(\quiver,\theta)$ onto an integral-affinely equivalent lattice polyhedron contained in $\polytope(\hat\quiver,\hat\theta)$.  Thus $a$ is contractable if and only if on the affine space $\floweqs^{-1}(\theta)$ the inequality $x(a)\ge 0$ is a consequence of the inequalities 
$x(b)\ge 0$ $(b\in\quiver_1\setminus \{a\}$). 
\end{proof} 

For an arrow $a\in\quiver_1$ set $\polytope(\quiver,\theta)_{x(a)=0}:=\{x\in\polytope(\quiver,\theta)\mid x(a)=0\}$. 

\begin{corollary}\label{cor:tightfacets} (i) The pair $(\quiver,\theta)$ is tight if and only if the assignment 
$a\mapsto \polytope(\quiver,\theta)_{x(a)=0}$ gives a bijection between $\quiver_1$ and the facets (codimension $1$ faces) of $\polytope(\quiver,\theta)$. 

(ii) If $(\quiver,\theta)$ is tight, then $\dim(\polytope(\quiver,\theta))=\chi(\quiver)$. 
\end{corollary}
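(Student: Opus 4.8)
The plan is to set $A:=\floweqs^{-1}(\theta)$ and to regard $\polytope(\quiver,\theta)=\{x\in A\mid x(a)\ge 0\ \forall a\in\quiver_1\}$ as the solution set, inside the affine space $A$, of the system of inequalities $\ell_a\ge 0$, where $\ell_a$ denotes the restriction to $A$ of the coordinate function $x\mapsto x(a)$. Recall that $\dim A=\chi(\quiver)$. Lemma~\ref{lemma:contractable} provides exactly the dictionary I want to exploit: by part (i), $a$ is removable precisely when $\ell_a$ vanishes identically on $\polytope(\quiver,\theta)$ (an implicit equality of the system), and by part (ii), $a$ is contractable precisely when $\ell_a\ge 0$ is redundant, i.e.\ implied on $A$ by the inequalities $\ell_b\ge 0$ with $b\neq a$. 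The whole statement is then a translation of tightness into the standard language of irredundant $H$-representations.

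The first step is a full-dimensionality lemma that also yields (ii): if no arrow of $\quiver$ is removable, then $\affspan(\polytope(\quiver,\theta))=A$. By Lemma~\ref{lemma:contractable}(i), for each $a$ there is a point $x^{(a)}\in\polytope(\quiver,\theta)$ with $x^{(a)}(a)>0$; averaging the $x^{(a)}$ gives a single point $x^\ast\in\polytope(\quiver,\theta)$ with $x^\ast(a)>0$ for all $a$ at once. Since every coordinate of $x^\ast$ is strictly positive, every point of $A$ sufficiently close to $x^\ast$ still satisfies all the inequalities $\ell_a\ge 0$ and hence lies in $\polytope(\quiver,\theta)$; thus $\polytope(\quiver,\theta)$ contains a neighbourhood of $x^\ast$ in $A$, so $\affspan(\polytope(\quiver,\theta))=A$ and $\dim\polytope(\quiver,\theta)=\dim A=\chi(\quiver)$. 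Since tightness in particular forbids removable arrows, this proves (ii).

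For (i) I would use the standard fact that a full-dimensional polyhedron admits, up to positive rescaling, a unique irredundant system of defining inequalities, and that the facets are exactly the faces $\{\ell=0\}\cap P$ cut out by the inequalities $\ell\ge 0$ of this system, with distinct inequalities giving distinct facets. If $(\quiver,\theta)$ is tight, then no arrow is removable, so by the first step $\polytope(\quiver,\theta)$ is full-dimensional in $A$; and no arrow is contractable, so by Lemma~\ref{lemma:contractable}(ii) each $\ell_a\ge 0$ is irredundant. The system $\{\ell_a\ge 0\}_{a\in\quiver_1}$ is therefore the irredundant representation, and the cited fact gives that $a\mapsto\polytope(\quiver,\theta)_{x(a)=0}$ is a bijection onto the facets. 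Conversely, suppose this map is a bijection onto facets. If some $a$ were removable then $\polytope(\quiver,\theta)_{x(a)=0}=\polytope(\quiver,\theta)$, which is not a proper face and so not a facet, a contradiction; hence $\polytope(\quiver,\theta)$ is full-dimensional in $A$. If some $a$ were contractable (hence redundant, and not removable), then discarding $\ell_a\ge 0$ leaves $\polytope(\quiver,\theta)$ unchanged, so $\polytope(\quiver,\theta)_{x(a)=0}$ is a proper face that is either not a facet at all or else coincides with the facet $\polytope(\quiver,\theta)_{x(b)=0}$ of some retained arrow $b$ whose functional $\ell_b$ is proportional to $\ell_a$ on $A$; the first alternative contradicts that the map lands in facets, the second contradicts injectivity. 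Hence no arrow is contractable and $(\quiver,\theta)$ is tight.

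I expect the main obstacle to be the converse direction, specifically controlling what a redundant inequality can do: a priori the hyperplane $\{\ell_a=0\}$ of a contractable arrow could reproduce an existing facet rather than a lower-dimensional face, because two arrows may restrict to proportional functionals on the proper affine subspace $A$. Pinning down that this is the only way a redundant inequality can meet $\polytope(\quiver,\theta)$ in codimension one, and that it then forces a collision of two arrows on the same facet, is the crux. Once this is handled, the argument is a faithful dictionary between the quiver-theoretic conditions of Lemma~\ref{lemma:contractable} and the polyhedral notions of implicit equality and redundant inequality, and requires no further quiver-specific input.
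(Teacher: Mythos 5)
Your proof is correct and follows essentially the same route as the paper: both use Lemma~\ref{lemma:contractable} to translate removability into an implicit equality and contractability into redundancy of the inequality $x(a)\ge 0$ on $\floweqs^{-1}(\theta)$, and then invoke the standard dictionary between irredundant $H$-representations of full-dimensional polyhedra and their facets. The paper compresses this into a single sentence, asserting that tightness is equivalent to the hyperplanes $\{x(a)=0\}\cap\floweqs^{-1}(\theta)$ being distinct supporting hyperplanes of $\polytope(\quiver,\theta)$ in its affine span; your write-up is in fact more careful on exactly the point you identify as the crux, since a redundant inequality can still cut out a supporting hyperplane (meeting the polyhedron in a face of low dimension), so ``distinct supporting hyperplanes'' is strictly weaker than the facet bijection being proved, and your two-case analysis in the converse (the intersection is not a facet, or it collides with the facet of a retained arrow, contradicting injectivity) is precisely what closes that gap. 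Your averaging argument for full-dimensionality in the absence of removable arrows likewise supplies a detail the paper leaves implicit.
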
 

\begin{proof} Lemma~\ref{lemma:contractable} shows that   $(\quiver,\theta)$ is tight  if and only if $\affspan(\polytope(\quiver,\theta))=\floweqs^{-1}(\theta)$ and $\{x(a)=0\}\cap\floweqs^{-1}(\theta)$ ($a\in\quiver_1$) are distinct supporting hyperplanes of $\polytope(\quiver,\theta)$ in its affine span.  
\end{proof} 

The following simple sufficient condition for contractibility of an arrow turns out to be sufficient for our purposes.  
For a subset $S\subseteq\quiver_0$ set $\theta(S):=\sum_{v\in S}\theta(v)$. By \eqref{eq:flow} for $x\in\floweqs^{-1}(\theta)$ we have 
\begin{equation}\label{eq:theta(S)}\theta(S)=\sum_{a\in \quiver_1, a^+\in S}x(a)-\sum_{a\in\quiver_1,a^-\in S}x(a)=\sum_{a^+\in S,a^-\notin S} x(a)-
\sum_{a^-\in S,a^+\notin S} x(a).
\end{equation} 

\begin{proposition}\label{prop:oneinoneout}  
Suppose that $S\subset \quiver_0$ has the property that there is at most one arrow $a$ with $a^+\in S$, $a^-\notin S$ and at most one arrow $b$ with $b^+\notin S$ and 
$b^-\in S$.  Then $a$ (if exists) is contractable when $\theta(S)\ge 0$ and $b$ (if exists) is contractable when $\theta(S)\le 0$. 
\end{proposition}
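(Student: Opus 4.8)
The plan is to reduce the claim to the contractibility criterion established in Lemma~\ref{lemma:contractable}(ii), namely that an arrow $a$ is contractable precisely when the inequality $x(a)\ge 0$ follows from the inequalities $x(b)\ge 0$ ($b\in\quiver_1\setminus\{a\}$) on the affine space $\floweqs^{-1}(\theta)$. The key observation is that the hypothesis on $S$ makes the cut-condition \eqref{eq:theta(S)} collapse into a single equation relating $x(a)$ (or $x(b)$) to a sum of the remaining coordinates.

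First I would treat the case of the arrow $a$ with $a^+\in S$, $a^-\notin S$, assuming $\theta(S)\ge 0$. Since by hypothesis $a$ is the only arrow entering $S$ from outside and $b$ (if it exists) is the only arrow leaving $S$, formula \eqref{eq:theta(S)} specializes, for every $x\in\floweqs^{-1}(\theta)$, to
\[\theta(S)=x(a)-x(b)\]
(with the convention that the $x(b)$ term is absent if no such $b$ exists). Rearranging gives $x(a)=\theta(S)+x(b)$. Now for any $x$ in the polyhedron $\{x\in\floweqs^{-1}(\theta)\mid x(c)\ge 0\ \forall c\in\quiver_1\setminus\{a\}\}$ we have $x(b)\ge 0$ (or the term is simply absent), so $x(a)=\theta(S)+x(b)\ge\theta(S)\ge 0$. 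Hence $x(a)\ge 0$ is a consequence of the remaining nonnegativity constraints, and Lemma~\ref{lemma:contractable}(ii) yields that $a$ is contractable.

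The argument for $b$ when $\theta(S)\le 0$ is entirely symmetric: the same specialization of \eqref{eq:theta(S)} gives $x(b)=x(a)-\theta(S)$, and on the polyhedron cut out by the constraints $x(c)\ge 0$ for $c\neq b$ we obtain $x(b)\ge -\theta(S)\ge 0$, so again Lemma~\ref{lemma:contractable}(ii) applies. The only point that needs care is the bookkeeping of which terms actually appear in \eqref{eq:theta(S)} under the ``at most one'' hypotheses, together with the degenerate subcases where $a$ or $b$ fails to exist; in each such subcase the relevant equation simply loses a term and the inequality becomes even easier. I do not expect any genuine obstacle here, since the whole content is the linear-algebra identity \eqref{eq:theta(S)} combined with the already-proven reformulation of contractibility; the ``hard part'' is merely verifying that \eqref{eq:theta(S)} really reduces to a two-term (or one-term) relation, which follows immediately from the cardinality assumptions on the arrows crossing the boundary of $S$.
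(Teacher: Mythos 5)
Your proof is correct and follows exactly the paper's own argument: both derive $\theta(S)=x(a)-x(b)$ from \eqref{eq:theta(S)} using the cardinality hypotheses on arrows crossing $S$, and then invoke Lemma~\ref{lemma:contractable}(ii) to conclude contractibility according to the sign of $\theta(S)$. Your write-up merely spells out the rearrangement and the degenerate subcases that the paper's one-line proof leaves implicit.
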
 

\begin{proof} By \eqref{eq:theta(S)} we have $\theta(S)=x(a)-x(b)$, hence by Lemma~\ref{lemma:contractable} $a$ or $b$ is contractable, depending on the sign of $\theta(S)$.  
\end{proof} 

By the {\it valency} of a vertex $v\in\quiver_0$ we mean $|\{a\in\quiver_1\mid a^-=v\}|+|\{a\in\quiver_1\mid a^+=v\}|$. 

\begin{corollary}\label{cor:shrinking} 
(i) Suppose that the vertex $v\in \quiver_0$  has valency $2$, and $a,b\in\quiver_1$ are arrows such that $a^+=b^-=v$. 
Then the arrow $a$ is contractable when  $\theta(v)\ge 0$ and $b$ is contractable when $\theta(v)\le 0$. 

(ii) Suppose that for some $c\in\quiver_1$, $c^-$ and $c^+$ have valency $2$, and $a,b\in\quiver _1\setminus \{c\}$ with $a^-=c^-$ and $b^+=c^+$. 
Then $a$ is contractable when $\theta(c^-)+\theta(c^+)\le 0$ and $b$ is contractable when $\theta(c^-)+\theta(c^+)\ge 0$. 
\end{corollary}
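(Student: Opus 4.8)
The plan is to derive both parts directly from Proposition~\ref{prop:oneinoneout} by choosing the subset $S\subseteq\quiver_0$ so that the arrow(s) we want to contract become the unique boundary-crossing arrow(s) of $S$; then the sign conditions on $\theta(S)$ translate immediately into the stated conditions on $\theta(v)$ and on $\theta(c^-)+\theta(c^+)$.

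For part (i) I would take $S=\{v\}$, so that $\theta(S)=\theta(v)$. Since $v$ has valency $2$ and $a^+=v=b^-$, the two arrow-endpoints at $v$ are exactly the head of $a$ and the tail of $b$; in particular neither $a$ nor $b$ is a loop, $a$ is the unique arrow with head in $S$ and tail outside, and $b$ is the unique arrow with tail in $S$ and head outside. The hypothesis of Proposition~\ref{prop:oneinoneout} then holds with $a$ playing the role of the incoming arrow and $b$ that of the outgoing arrow, and the proposition yields contractibility of $a$ when $\theta(v)\ge 0$ and of $b$ when $\theta(v)\le 0$, which is exactly the claim.

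For part (ii) I would take $S=\{c^-,c^+\}$, so that $\theta(S)=\theta(c^-)+\theta(c^+)$. The valency hypotheses pin down the local picture: at $c^-$ the only incident arrows are $c$ and $a$, both with tail $c^-$, so $c^-$ has no incoming arrow; at $c^+$ the only incident arrows are $c$ and $b$, both with head $c^+$, so $c^+$ has no outgoing arrow. Since $c$ lies entirely inside $S$, the arrow $b$ is the unique arrow entering $S$ and $a$ is the unique arrow leaving $S$. Applying Proposition~\ref{prop:oneinoneout} — now with the \emph{incoming} arrow being $b$ and the \emph{outgoing} arrow being $a$, so that the roles are swapped relative to the labelling in the statement — gives that $b$ is contractable when $\theta(S)\ge 0$ and $a$ is contractable when $\theta(S)\le 0$, as asserted.

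The one point requiring care, and the main (if minor) obstacle, is verifying in part (ii) that $a$ and $b$ really cross the boundary of $S$ rather than being absorbed into it. The valency-$2$ constraints already force $a^+\ne c^-$ and $b^-\ne c^+$; the only way $a$ could fail to exit $S$ would be $a^+=c^+$, and counting endpoints at $c^+$ shows this forces $a=b$ to be a second arrow parallel to $c$ (symmetrically for $b$). Under the standing assumption that $a,b$ are distinct from $c$ this degenerate parallel configuration is the sole exceptional case, and it can either be excluded or treated separately straight from the flow identity \eqref{eq:theta(S)}. Once this verification is in place, the corollary follows at once.
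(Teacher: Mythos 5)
Your proof is correct and is essentially the paper's own argument: the paper's entire proof reads ``Apply Proposition~\ref{prop:oneinoneout} with $S=\{v\}$ to get (i) and with $S=\{c^-,c^+\}$ to get (ii)'', and your verifications (in particular the swap of roles in (ii), where $b$ is the arrow entering $S$ and $a$ the arrow leaving it, which is what produces the reversed sign conditions) merely make explicit what the paper leaves to the reader. One caution on your final paragraph: in the degenerate configuration $a=b$ parallel to $c$ the conclusion of (ii) genuinely fails --- contracting $a$ turns $c$ into a loop and replaces a bounded segment by a ray, as one checks from Lemma~\ref{lemma:contractable}(ii) --- so this case must be \emph{excluded} (i.e.\ the hypothesis read as $a\neq b$, a tacit assumption the paper also makes); it cannot be ``treated separately straight from the flow identity''.
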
 
\begin{proof} Apply Proposition~\ref{prop:oneinoneout} with $S=\{v\}$ to get (i) and with $S=\{c^-,c^+\}$ to get (ii). 
\end{proof}

\begin{proposition}\label{prop:reflection} 
Suppose that there are 
exactly two arrows $a,b\in\quiver_1$ attached to some vertex $v$, and either $a^+=b^+=v$ or $a^-=b^-=v$.  
Let $\quiver',\theta'$ be the quiver and weight obtained after replacing 

\begin{tikzpicture}[>=open triangle 45] 
\node [right] at (2.5,0.5) {by}; 
\node [below] at (0,0) {$u$}; \node [below] at (2,0) {$w$}; 
\foreach \x in {(0,0),(1,1),(2,0)} \filldraw \x circle (2pt); 
\node [above] at (1,1) {$v$};  \node [above, left] at (0.5,0.6) {$a$}; \node[above,right] at (1.5,0.6) {$b$}; 
\draw [->] (0,0)--(1,1); \draw [<-] (1,1)--(2,0);  
\end{tikzpicture} 
\begin{tikzpicture}[>=open triangle 45] 
\node [right] at (2.5,0.5) {or}; 
\node [below] at (0,0) {\scriptsize{$\theta(u)+\theta(v)$}}; \node [below] at (2,0) {\scriptsize{$\theta(w)+\theta(v)$}}; 
\foreach \x in {(0,0),(1,1),(2,0)} \filldraw \x circle (2pt); 
\node [above] at (1,1) {\scriptsize{$ -\theta(v)$}};  \node [above, left] at (0.5,0.6) {$\hat a$}; \node[above,right] at (1.5,0.6) {$\hat b$}; 
\draw [<-] (0,0)--(1,1); \draw [->] (1,1)--(2,0);  
\end{tikzpicture} \quad 
\begin{tikzpicture}[>=open triangle 45] 
\node [right] at (2.5,0.5) {by}; 
\node [below] at (0,0) {$u$}; \node [below] at (2,0) {$w$}; 
\foreach \x in {(0,0),(1,1),(2,0)} \filldraw \x circle (2pt); 
\node [above] at (1,1) {$v$};  \node [above, left] at (0.5,0.6) {$a$}; \node[above,right] at (1.5,0.6) {$b$}; 
\draw [<-] (0,0)--(1,1); \draw [->] (1,1)--(2,0);  
\end{tikzpicture} 
\begin{tikzpicture}[>=open triangle 45] 
\node [right] at (2.5,0.5) {.}; 
\node [below] at (0,0) {\scriptsize{$\theta(u)+\theta(v)$}}; \node [below] at (2,0) {\scriptsize{$\theta(w)+\theta(v)$}}; 
\foreach \x in {(0,0),(1,1),(2,0)} \filldraw \x circle (2pt); 
\node [above] at (1,1) {\scriptsize{$ -\theta(v)$}};  \node [above, left] at (0.5,0.6) {$\hat a$}; \node[above,right] at (1.5,0.6) {$\hat b$}; 
\draw [->] (0,0)--(1,1); \draw [<-] (1,1)--(2,0);  
\end{tikzpicture} \quad 

\noindent That is, replace the arrows $a,b$ by  $\hat a$ and $\hat b$ obtained by reversing them, and 
consider the weight $\theta'\in\mz^{\quiver'_1}$ given by $\theta'(v)=-\theta(v)$, $\theta'(u)=\theta(u)+\theta(v)$ when $u\neq v$ is an endpoint of $a$ or $b$, and 
$\theta'(w)=\theta(w)$ for all other $w\in\quiver'_0=\quiver_0$.  
Then the polyhedra $\polytope(\quiver,\theta)$ and $\polytope(\quiver',\theta')$ are integral-affinely equivalent. 
\end{proposition}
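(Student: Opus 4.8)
The plan is to exhibit a single explicit affine-linear isomorphism and to check that it satisfies (i) and (ii) of Definition~\ref{def:isomorphicpolytopes}. Since $\quiver$ and $\quiver'$ have the same arrow set except that $a,b$ are replaced by their reversals $\hat a,\hat b$, I identify $\mr^{\quiver_1}$ with $\mr^{\quiver'_1}$ through the bijection of arrow sets sending $a\mapsto\hat b$, $b\mapsto \hat a$, and fixing every other arrow. Concretely, define $\varphi:\mr^{\quiver_1}\to\mr^{\quiver'_1}$, $x\mapsto y$, by $y(\hat a)=x(b)$, $y(\hat b)=x(a)$, and $y(c)=x(c)$ for $c\in\quiver_1\setminus\{a,b\}$. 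This is visibly a lattice isomorphism carrying $\mz^{\quiver_1}$ onto $\mz^{\quiver'_1}$ and the nonnegative orthant onto the nonnegative orthant, so properties (i)--(ii) will follow once I show that $\varphi$ maps the affine subspace $\floweqs^{-1}(\theta)$ onto $\floweqs'^{-1}(\theta')$.

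The key observation is that, because $v$ has valency $2$ with both $a,b$ pointing the same way, the flow equation at $v$ in $\quiver$ is forced: when $a^+=b^+=v$ it reads $x(a)+x(b)=\theta(v)$, and when $a^-=b^-=v$ it reads $x(a)+x(b)=-\theta(v)$. I then verify $\floweqs'(\varphi(x))=\theta'$ vertex by vertex for $x\in\floweqs^{-1}(\theta)$. At every vertex $z\notin\{u,v,w\}$ neither the incident arrows nor the weight change, so the equation is untouched. At $v$ the only arrows of $\quiver'$ incident to it are $\hat a,\hat b$ (now both pointing the opposite way), and substituting $y(\hat a)+y(\hat b)=x(a)+x(b)$ together with the forced value above gives $\floweqs'(y)(v)=-\theta(v)=\theta'(v)$ in both cases. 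At $u$ the reversal of $a$ flips the sign of its contribution to the flow; writing $\sigma_u$ for the contribution of the arrows other than $a$ (which $\varphi$ leaves fixed), the equation $\floweqs(x)(u)=\theta(u)$ determines $\sigma_u$, and substituting $y(\hat a)=x(b)$ together with $x(a)+x(b)=\pm\theta(v)$ yields exactly $\floweqs'(y)(u)=\theta(u)+\theta(v)=\theta'(u)$; the vertex $w$ is symmetric.

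Because reversing $\hat a,\hat b$ again and undoing the weight shift returns $(\quiver,\theta)$, the construction is an involution, so the second configuration $a^-=b^-=v$ is handled by the same map $\varphi$ and the same computation with the sign of $\theta(v)$ adjusted as above; no separate argument is needed. The only point requiring care --- and the main (minor) obstacle --- is the bookkeeping in the degenerate situations permitted by the hypothesis, namely when the two outer endpoints coincide ($u=w$, a double arrow) or when $u$ or $w$ carries further arrows of $\quiver$. In the coincident case the very same computation shows that $\theta'$ picks up $\theta(v)$ once for each of the two incidences at that vertex, which is the intended reading of the weight rule, while in general the quantities $\sigma_u,\sigma_w$ simply absorb the unchanged arrows without affecting the cancellation. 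Once these incidences are tracked correctly, the vertex-by-vertex check goes through verbatim and shows that $\varphi$ restricts to the required integral-affine equivalence $\polytope(\quiver,\theta)\to\polytope(\quiver',\theta')$.
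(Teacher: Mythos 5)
Your proposal is correct and uses exactly the map of the paper's own proof (swapping the coordinates $a\leftrightarrow\hat b$, $b\leftrightarrow\hat a$ and fixing all other arrows); the paper merely asserts the verification is ``straightforward to check,'' while you carry out the vertex-by-vertex flow computation, note the involution property, and track the degenerate incidences explicitly. No gap.
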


\begin{proof} It is straightforward to check that the map $\varphi:\mr^{\quiver_1}\to \mr^{\quiver'_1}$ given by 
$\varphi(x)(\hat a)=x(b)$, $\varphi(x)(\hat b)=x(a)$, and $\varphi(x)(c)=x(c)$ for all $c\in \quiver'_1\setminus\{\hat a,\hat b\}=\quiver_1\setminus\{a,b\}$ 
restricts to an isomorphism 
between $\affspan(\polytope(\quiver,\theta))$ and $\affspan(\polytope(\quiver',\theta'))$ 
satisfying (i) and (ii) in Definition~\ref{def:isomorphicpolytopes}. 
\end{proof}

\begin{remark}\label{remark:reflection} {\rm 
Proposition~\ref{prop:reflection} can be interpreted in terms of {\it reflection transformations}: it was shown in Sections 2 and 3 in \cite{kac} (see also Theorem 23 in \cite{skowronski-weyman}) that   reflection transformations on representations of quivers induce isomorphisms of algebras of semi-invariants. 
Now under our assumptions a reflection transformation at vertex $v$ fixes the dimension vector $(1,\dots,1)$. }
\end{remark}

\begin{proposition}\label{prop:glueing} 
Suppose that $\quiver$ is the union of its full subquivers $\quiver'$, $\quiver''$ which are either disjoint or have a single common vertex $v$. Identify $\mr^{\quiver'_1}\oplus \mr^{\quiver''_1}=\mr^{\quiver_1}$ in the obvious way, 
and let $\theta'\in\mz^{\quiver'_0}\subset\mz^{\quiver_0}$, $\theta''\in\mz^{\quiver''_0}\subset\mz^{\quiver_0}$ be the unique weights with 
$\theta=\theta'+\theta''$ and  $\theta'(v)=-\sum_{w\in \quiver'_0\setminus \{v\}}\theta(w)$, $\theta''(v)=-\sum_{w\in \quiver''_0\setminus\{v\}}\theta(w)$ when 
$\quiver'_0\cap\quiver''_0=\{v\}$.

(i) Then the quiver polyhedron $\polytope(\quiver,\theta)$ is the product of the polyhedra $\polytope(\quiver',\theta')$ 
and $\polytope(\quiver'',\theta'')$.  

(ii) We have $\moduli(\quiver,\theta)\cong \moduli(\quiver',\theta')\times \moduli(\quiver'',\theta'')$.  
\end{proposition}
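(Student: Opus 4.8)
The plan is to establish (i) directly from the flow equations and then obtain (ii) either through Proposition~\ref{prop:hille} together with the standard fact that the toric variety of a product is the product of toric varieties, or, more in the spirit of Proposition~\ref{prop:realizationofmoduli}, at the level of graded algebras. The decisive work is all in (i), and in particular in the analysis of the one shared vertex.

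For (i): since $\quiver'$ and $\quiver''$ are full subquivers covering $\quiver$ and meeting in at most the vertex $v$ (which carries no arrow joining the two parts), the arrow set splits as a disjoint union $\quiver_1=\quiver'_1\sqcup\quiver''_1$, giving the identification $\mr^{\quiver_1}=\mr^{\quiver'_1}\oplus\mr^{\quiver''_1}$ from the statement; write $x=(x',x'')$ accordingly. I would read the flow equation $\floweqs(x)(w)=\theta(w)$ off vertex by vertex. For $w\in\quiver'_0\setminus\{v\}$ every arrow incident to $w$ lies in $\quiver'_1$, so the equation involves only $x'$ and reads $\floweqs'(x')(w)=\theta'(w)$, and symmetrically for $\quiver''$; together with $x\ge0\Leftrightarrow x'\ge0,\,x''\ge0$ this will give the inclusion $\polytope(\quiver',\theta')\times\polytope(\quiver'',\theta'')\subseteq\polytope(\quiver,\theta)$, the equation at $v$ holding because $\theta'(v)+\theta''(v)=\theta(v)$.

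The delicate point is precisely the equation at the shared vertex $v$: a priori $\floweqs(x)(v)=\theta(v)$ is a single equation coupling $x'$ and $x''$, not obviously equivalent to the two separate conditions $\floweqs'(x')(v)=\theta'(v)$ and $\floweqs''(x'')(v)=\theta''(v)$. The resolution is that, $\quiver'$ being a full subquiver, summing $\floweqs'(x')$ over all of $\quiver'_0$ telescopes to $0$; hence once the equations at the vertices of $\quiver'_0\setminus\{v\}$ hold, the value $\floweqs'(x')(v)$ is forced to equal $-\sum_{w\in\quiver'_0\setminus\{v\}}\theta(w)=\theta'(v)$ automatically, and likewise for $x''$. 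This is exactly why the splitting $\theta=\theta'+\theta''$ with $\theta'(v)=-\sum_{w\in\quiver'_0\setminus\{v\}}\theta(w)$ is the correct one, and it shows the constraint at $v$ decouples. Thus $\polytope(\quiver,\theta)=\polytope(\quiver',\theta')\times\polytope(\quiver'',\theta'')$ inside the product lattice $\mz^{\quiver_1}=\mz^{\quiver'_1}\oplus\mz^{\quiver''_1}$.

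For (ii) I would invoke Proposition~\ref{prop:hille}, which identifies each moduli space with the toric variety of the normal fan of its quiver polyhedron: since the normal fan of a product of polyhedra (in the product lattice) is the product of the normal fans, part (i) yields $X_{\polytope(\quiver,\theta)}\cong X_{\polytope(\quiver',\theta')}\times X_{\polytope(\quiver'',\theta'')}$. Alternatively I can argue algebraically: applying (i) to the weight $n\theta$ (whose canonical splitting is $n\theta'+n\theta''$) and intersecting with the lattice gives $\polytope(\quiver,n\theta)\cap\mz^{\quiver_1}=(\polytope(\quiver',n\theta')\cap\mz^{\quiver'_1})\times(\polytope(\quiver'',n\theta'')\cap\mz^{\quiver''_1})$; as the arrow variables of $\quiver'$ and $\quiver''$ are disjoint, the monomial indexed by $(m',m'')$ factors as $R'^{m'}R''^{m''}$, so $\coord(\rep(\quiver))_{n\theta}$ is the tensor product of the degree-$n$ components for $\quiver'$ and $\quiver''$, whence $\bigoplus_n\coord(\rep(\quiver))_{n\theta}$ is the Segre product of the two factor algebras and $\proj$ of a Segre product is the product of the two $\proj$'s. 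The main obstacle is the bookkeeping at the shared vertex in (i): making explicit that the coupled flow equation at $v$ is redundant given the others, which is the only place the fullness of the subquivers and the meet-in-one-vertex hypothesis genuinely enter. Once (i) is correctly set up, (ii) is essentially formal; the sole care needed there is to confirm that $\mz^{\quiver_1}$ really is the product lattice $\mz^{\quiver'_1}\oplus\mz^{\quiver''_1}$, which is immediate from the disjointness of the arrow sets.
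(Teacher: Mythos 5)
Your proof is correct and follows essentially the same route as the paper: the same decomposition $x=(x',x'')$ subordinate to $\quiver_1=\quiver'_1\sqcup\quiver''_1$ for (i), and Proposition~\ref{prop:hille} together with compatibility of normal fans with products for (ii). The paper simply declares (i) ``obvious by definition of quiver polyhedra''; your telescoping argument showing that the flow equation at the shared vertex $v$ is forced by the equations at the other vertices is exactly the detail it leaves implicit.
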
 

\begin{proof} (i) A point $x\in\mr^{\quiver_1}$ uniquely decomposes as $x=x'+x''$, where $x'(a)=0$ for all $a\notin\quiver'_1$ and 
$x''(a)=0$ for all $a\notin \quiver''_1$. It is obvious by definition of quiver polyhedra that $x\in\polytope(\quiver,\theta)$ if and only if 
$x'\in\polytope(\quiver',\theta')$ and $x''\in\polytope(\quiver'',\theta'')$. 

(ii) was observed already in  \cite{hille:chemnitz} and follows from (i) by Proposition~\ref{prop:hille}.  
\end{proof}

\begin{definition} \label{def:prime} {\rm  
\begin{itemize} 
\item[(i)]  We  call a connected undirected graph $\Gamma$   (with at least one edge) {\it prime} if it is not the union of full proper subgraphs  $\Gamma',\Gamma''$ having only one common vertex (i.e. it is 2-vertex-connected). A quiver $\quiver$ will be called  {\it prime} if  its underlying graph is prime. 

\item[(ii)] We call  a toric variety {\it prime} if it is not the product of lower dimensional toric varieties. \end{itemize}}
\end{definition} 

Obviously any toric variety is the product of prime toric varieties, and this product decomposition is unique up to the order of the factors  (see for example Theorem 2.2 in \cite {hatanaka}). It is not immediate from the definition, but we shall show in Theorem~\ref{thm:directprod} (iii) that the prime factors of a toric quiver variety are  quiver 
varieties as well.   

Note that a toric quiver variety associated to a non-prime quiver may well be prime, and conversely, a toric quiver variety associated to a prime quiver can be non-prime, as it is shown by the following example: 

\[\begin{tikzpicture}[>=open triangle 45,scale=0.8] 
\foreach \x in {(0,0),(1,0),(1,1),(1,-1),(2,0)} \filldraw \x circle (2pt); 
\foreach \y in {(1,0),(1,1),(1,-1)} \draw  [->]  (0,0)--\y; 
\foreach \x in {(1,0),(1,1),(1,-1)} \draw  [<-] \x--(2,0); 
\node [left] at (0,0) {$-2$};  \node [above] at (1,1) {$1$}; \node [above] at (1,0) {$1$}; \node [below] at (1,-1) {$2$}; \node [right] at (2,0) {$-2$}; 
\end{tikzpicture}\]

The quiver in the picture is prime but the moduli space corresponding to this weight is $\mathbb{P}^1\times\mathbb{P}^1$. 
However, as shown by Theorem~\ref{thm:directprod} below, when the tightness of some $(\quiver, \theta)$ is assumed, decomposing $\quiver$ into its unique maximal prime components gives us the unique decomposition of $\moduli(\quiver,\theta)$ as a product of prime toric varieties. 

\begin{theorem} \label{thm:directprod}
\begin{itemize}
\item[(i)] Let $\quiver^i$ $(i=1,\dots,k)$ be the maximal prime full subquivers of $\quiver$, and denote by  $\theta^i\in\mz^{\quiver^i_0}$ the unique weights satisfying $\sum_{i=1}^k\theta^i(v)=\theta(v)$ for all $v\in\quiver_0$ and $\sum_{v\in{\quiver_0^i}}\theta^i(v)=0$ for all $i$. Then $\moduli(\quiver,\theta)\cong \prod_{i=1}^k\moduli(\quiver^i,\theta^i)$. Moreover, if $(\quiver,\theta)$ is tight, then  the $(\quiver^i,\theta^i)$ are all tight. 
\item[(ii)] If $(\quiver,\theta)$ is tight then $\moduli(\quiver,\theta)$ is prime if and only if $\quiver$ is prime. 
\item[(iii)]  Any toric quiver variety is the product of prime toric quiver varieties. 
\end{itemize}
\end{theorem}

\begin{proof}
The isomorphism $\moduli(\quiver,\theta)\cong \prod_{i=1}^k\moduli(\quiver^i,\theta^i)$ follows from Proposition~\ref{prop:glueing}. The second statement in (i) follows from this isomorphism  and Corollary~\ref{cor:tightfacets}. 

Next we turn to the proof of (ii), so suppose that $(\quiver,\theta)$ is tight. If $\quiver$ is not prime, then $\chi(\quiver^i)>0$ for all $i$, hence $\moduli(\quiver,\theta)$ is not prime by (i). To show the reverse implication assume on the contrary that $\quiver$ is prime, and $\moduli(\quiver,\theta)\cong X'\times X''$ where 
$X',X''$ are positive dimensional toric varieties.  Note that then $\quiver_1$ does not contain loops. 
Let $\{\varepsilon_a\mid a \in \quiver_1\}$ be a $\mz$-basis of $\mz^{\quiver_1}$, and for each vertex $v\in\quiver_0$ let us define $C_v := \sum_{a^+=v}\varepsilon_a - \sum_{a^-=v}\varepsilon_a$. Following the description of the toric fan $\Sigma$ of  $\moduli(\quiver,\theta)$ in \cite{hille:canada} we can identify the lattice of one-parameter subgroups $N$ of $\moduli(\quiver,\theta)$ with $\mz^{\quiver_1} / \langle C_v\mid v\in\quiver_0 \rangle$, and the ray generators of the fan with the cosets of the 
$\varepsilon_a$. Denoting by $\Sigma'$ and $\Sigma''$ the fans of $X'$ and $X''$ respectively, we have 
$\Sigma = \Sigma' \times \Sigma''=\{\sigma'\times\sigma''\mid\sigma'\in\Sigma',\:\sigma''\in\Sigma''\}$ (see \cite{cox-little-schenck} Proposition 3.1.14). 
Denote by $\pi':N\rightarrow N'$, $\pi'':N\rightarrow N''$ the natural projections to the sets of  one-parameter subgroups of the tori in $X'$ and $X''$. 
For each ray generator $\varepsilon_a$ we have either $\pi'(\varepsilon_a) = 0$ or $\pi''(\varepsilon_a) = 0$. Since $(\quiver,\theta)$ is tight we obtain a partition of 
$\quiver_1$ into two disjoint non-empty sets of arrows: $\quiver_1'=\{a\in\quiver_1\mid\pi''(a)=0\}$ and $\quiver_1''=\{a\in\quiver_1\mid\pi'(a)=0\}$.  
Since $\quiver$ is prime, it is connected, hence there exists a vertex $w$ incident to arrows both from $\quiver'_1$ and $\quiver''_1$. 
Let $\Pi'$ and $\Pi''$ denote the projections from $\mz^{\quiver_1}$ to $\mz^{\quiver_1'}$ and $\mz^{\quiver_1''}$. 
By choice of $w$ we have $\Pi'(C_w)\neq 0$  and $\Pi''(C_w)\neq 0$.   Writing $\varphi$ for the natural map from 
$\mz^{\quiver_1}$ to $N \cong \mz^{\quiver_1} / \langle C_v\mid v\in\quiver_0 \rangle$ 
we have $\varphi \circ \Pi' = \pi' \circ \varphi$ and $\varphi \circ \Pi'' = \pi'' \circ \varphi$, so $\ker(\varphi) = \langle C_v\mid v\in\quiver_0 \rangle$ is closed under 
$\Pi'$ and $\Pi''$. Taking into account that $\sum_{v\in\quiver_0}C_v=0$ we deduce that  
$\Pi'(C_w)=\sum_{v\in\quiver_0\setminus\{w\}}\lambda_vC_v$ for some $\lambda_v\in \mz$. 
Set $S':=\{v\in\quiver_0\mid\lambda_v\ne 0\}$. Since each arrow appears in exactly two of the $C_v$, it follows that $S'$ contains all vertices connected to $w$ by an arrow in $\quiver'_1$, hence $S'$ is non-empty. Moreover, the set of arrows having exactly one endpoint in $S'$ are exactly those arrows in $\quiver'_1$ that are adjacent to $w$. 
Thus $S'':=\quiver_0\setminus (S'\cup\{w\})$ contains all vertices that are connected to $w$ by an arrow from $\quiver''_1$, hence $S''$ is non-empty. Furthermore, there are no arrows in $\quiver_1$ connecting a vertex from $S'$ to a vertex in $S''$. It follows that $\quiver$ is the union of its full subquivers spanned by the vertex sets 
$S'\cup\{w\}$ and $S''\cup\{w\}$, having only one common vertex $w$ and no common arrow. This contradicts the assumption that $\quiver$ was prime. 

Statement (iii) follows from (i), (ii) and Proposition~\ref{prop:tightsufficient}. 
\end{proof}

Note that if $\chi(\Gamma)\ge 2$ and $\Gamma$ is prime, then $\Gamma$ contains no loops (i.e. an edge with identical endpoints), every vertex of $\Gamma$ has valency at least $2$, and $\Gamma$ has  at least two vertices with valency at least $3$. 

\begin{definition}\label{def:skeleton} {\rm 
For $d=2,3,\dots$ denote by $\sklist_d$ the set of prime graphs $\Gamma$ with $\chi(\Gamma)=d$ in which all vertices have valency at least $3$. 
Let $\reducedquivers_d$ stand for the set of quivers $\quiver$ with no oriented cycles obtained from a graph $\Gamma\in\sklist_d$ by orienting some of the edges somehow and putting a sink on the remaining edges (that is, we replace an edge  by a path of length $2$ in which both edges are pointing towards the new vertex in the middle). We shall call $\Gamma$ the {\it skeleton} $\skeleton(\quiver)$ of $\quiver$; note that $\chi(\quiver)=\chi(\skeleton(\quiver))$. 
}\end{definition}

Starting from $\quiver$, its skeleton $\Gamma=\skeleton(\quiver)$ can be recovered as follows: 
$\Gamma_0$ is the subset of $\quiver_0$ consisting of the valency $3$ vertices. 
For each path in the underlying graph of $\quiver$ that connects two vertices in $\Gamma_0$ and whose inner vertices have valency $2$ we put an edge. 
Clearly, a  quiver $\quiver$ with $\chi(\quiver)=d\ge 2$ belongs to $\reducedquivers_d$ if and only if the following conditions hold: 
\begin{itemize}
\item[(i)] $\quiver$ is prime.  
\item[(ii)] There is no arrow of $\quiver$ connecting valency $2$ vertices. 
\item[(iii)] Every valency $2$ vertex of $\quiver$ is a sink. 
\end{itemize} 
Furthermore, set $\reducedquivers:=\bigsqcup_{d=1}^\infty \reducedquivers_d$ where $\reducedquivers_1$ is the $1$-element set consisting of the Kronecker quiver 
\begin{tikzpicture}[>=open triangle 45]  \draw [->] (0,0) to [out=45, in=135] (1,0); \node [above] at (0,0) {}; \node [above] at (1,0) {}; 
\draw  [->] (0,0) to [out=315, in=225] (1,0) ;
\filldraw (0,0) circle (1.5pt) (1,0) circle (1.5pt); 
\end{tikzpicture}.

\begin{remark}\label{remark:combinatorial-tightness}  {\rm 
A purely  combinatorial characterization of tightness is given in Lemma 13 of  \cite{altmann-straten}. Namely, $(\quiver,\theta)$ is tight if and only if  any connected 
component of $\quiver$ is $\theta$-stable, and  any connected component  of $\quiver\setminus \{a\}$ for any $a\in\quiver_1$ is $\theta$-stable (see Section~\ref{sec:affine} for the notion of $\theta$-stability). In the same Lemma  it is also shown that if $(\quiver,\theta)$ is tight, then $(\quiver,\delta_{\quiver})$ is tight, where $\delta_{\quiver}:=\sum_{a\in\quiver_1}(\varepsilon_{a^+}-\varepsilon_{a^-})$ is the so-called {\it canonical weight} (here $\varepsilon_v$ stands for the characteristic function of $v\in\quiver_0$). 
It is easy to deduce that for a connected quiver $\quiver$ the pair $(\quiver,\delta_{\quiver})$ is tight if and only if there is no partition $\quiver_0=S\coprod S'$ such that there is 
at most one arrow from $S$ to $S'$ and there is at most one arrow from $S'$ to $S$. }
\end{remark}

\begin{proposition}\label{prop:boundonskeletons} 
For any $d\ge 2$, $\Gamma\in\sklist_d$ and $\quiver\in\reducedquivers_d$ we have the inequalities 
\[|\Gamma_0|\le 2d-2, \quad |\Gamma_1|\le 3d-3, \quad |\quiver_0|\le  5(d-1), \quad |\quiver_1|\le  6(d-1).\] 
In particular, $\sklist_d$ and $\reducedquivers_d$ are finite for each positive integer $d$. 
\end{proposition} 

\begin{proof} 
Take $\Gamma\in\sklist_d$ where $d\ge 2$. 
Then $\Gamma$ contains no loops, and denoting by $e$ the number of edges and by $v$ the number of vertices of $\Gamma$, we have the inequality 
$2e\ge 3v$, since each vertex is adjacent to at least three edges. On the other hand $e=v-1+d$. We conclude that $v\le 2d-2$ and hence $e\le 3d-3$. 
For $\quiver\in\reducedquivers_d$ with $\skeleton(\quiver)=\Gamma$ we have that $|\quiver_0|\le v+e$ and $|\quiver_1|\le 2e$. 
\end{proof}

\begin{theorem}\label{thm:finitelymanymoduli}  \begin{itemize}
\item[(i)] Any $d$-dimensional  prime toric quiver variety $\moduli(\quiver,\theta)$ can be realized by a tight pair $(\quiver,\theta)$ 
where  $\quiver\in\reducedquivers_d$ (consequently $|\quiver_0|\le 5(d-1)$ and $|\quiver_1|\le 6(d-1)$ when $d\ge 2$). 
\item[(ii)] For each positive integer $d$ up to isomorphism there are only finitely many $d$-dimensional toric quiver varieties.  
\end{itemize}
 \end{theorem}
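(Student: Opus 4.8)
The plan is to prove (i) by first normalizing $(\quiver,\theta)$ to a tight pair and then using reflections to turn every valency $2$ vertex into a sink. I would start from any realization $\moduli(\quiver,\theta)$ and apply Proposition~\ref{prop:tightsufficient} to replace $(\quiver,\theta)$ by an integral-affinely equivalent tight pair, obtained by successively removing or contracting arrows; this does not change the isomorphism class of the variety. Each such step keeps the polyhedron integral-affinely equivalent, hence preserves boundedness, so a projective input (i.e.\ $\polytope(\quiver,\theta)$ bounded, equivalently $\quiver$ acyclic, by the recession-cone description in Proposition~\ref{prop:normal}) stays acyclic throughout; this is the only relevant case, since the members of $\reducedquivers_d$ are acyclic. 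By Corollary~\ref{cor:tightfacets}(ii) the tight pair satisfies $\chi(\quiver)=\dim\moduli(\quiver,\theta)=d$, and by Theorem~\ref{thm:directprod}(ii), primeness of the variety forces $\quiver$ to be prime.

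The heart of (i) is the reflection step. For a tight pair, Corollary~\ref{cor:shrinking}(i) rules out a valency $2$ vertex with one incoming and one outgoing arrow (such an arrow would be contractable), so every valency $2$ vertex is a source or a sink, and Corollary~\ref{cor:shrinking}(ii) rules out an arrow joining two valency $2$ vertices; in particular both neighbours of a valency $2$ vertex have valency at least $3$. I would then repeatedly apply the reflection of Proposition~\ref{prop:reflection} at each valency $2$ source $v$, reversing its two equally-oriented arrows so that $v$ becomes a sink. Using Corollary~\ref{cor:tightfacets}(i) one checks that reflection carries the facet--arrow bijection to a facet--arrow bijection, hence preserves tightness; it also preserves the underlying graph (so primeness, $\chi$, and the set of valency $2$ vertices) and the isomorphism class of $\moduli$. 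Termination is the key point: reflecting at $v$ only reverses arrows at $v$ and at its neighbours, which have valency $\ge 3$, so no valency $2$ vertex other than $v$ changes type and the number of valency $2$ sources strictly decreases. After finitely many reflections all valency $2$ vertices are sinks, and then conditions (i)--(iii) following Definition~\ref{def:skeleton} hold, i.e.\ $\quiver\in\reducedquivers_d$ for $d\ge 2$ (the case $d=1$ being the Kronecker base case); the size bounds are Proposition~\ref{prop:boundonskeletons}.

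For (ii) I would first use Theorem~\ref{thm:directprod}(iii) to write an arbitrary $d$-dimensional toric quiver variety as a product of prime ones of total dimension $d$; since there are only finitely many ways to express $d$ as such a sum, it suffices to bound, for each $e\le d$, the $e$-dimensional prime toric quiver varieties. Running the same tighten-and-reflect normalization as in (i) --- which is purely combinatorial and uses no acyclicity --- produces a tight prime pair with $\chi(\quiver)=e$ in which every valency $2$ vertex is a sink and no arrow joins two valency $2$ vertices. Its skeleton then lies in $\sklist_e$, so Proposition~\ref{prop:boundonskeletons} gives $|\quiver_0|\le 5(e-1)$ and $|\quiver_1|\le 6(e-1)$ (again with the Kronecker quiver for $e=1$). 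Hence only finitely many quivers $\quiver$ occur up to isomorphism.

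It remains to bound, for each such fixed $\quiver$, the number of isomorphism classes of $\moduli(\quiver,\theta)$ as $\theta$ varies. By Proposition~\ref{prop:hille} this variety is determined up to isomorphism by the normal fan of $\polytope(\quiver,\theta)=\floweqs^{-1}(\theta)\cap\mr_{\ge 0}^{\quiver_1}$, which lives in the $\theta$-independent lattice $\mz^{\quiver_1}/\langle \varepsilon\text{-flow relations}\rangle$ with the fixed images of the $\varepsilon_a$ as candidate ray generators. Every nonempty face of this polyhedron has the form $\{x\in\polytope(\quiver,\theta)\mid x(a)=0\ \forall a\in A\}$ for some $A\subseteq\quiver_1$, so the fan is recovered from the finite collection of subsets $A$ cutting out nonempty faces together with this fixed lattice-and-ray data. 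As $\quiver_1$ is finite, only finitely many such combinatorial types --- hence finitely many fans and finitely many varieties --- arise as $\theta$ ranges over $\mz^{\quiver_0}$. Combined with the finiteness of the quiver list this yields finitely many $e$-dimensional prime toric quiver varieties for each $e$, and then finitely many $d$-dimensional toric quiver varieties. I expect the two delicate points to be the verification that reflection preserves tightness (so the normalization iterates without re-tightening, which is what makes termination clean) and this final fan-counting step, where one must argue carefully that the normal fan is genuinely determined by the finite combinatorial face data together with the $\theta$-independent lattice and ray generators.
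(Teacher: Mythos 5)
Your proof of part (i) is essentially the paper's: the paper likewise obtains (i) by combining Proposition~\ref{prop:tightsufficient}, Corollary~\ref{cor:shrinking}, Proposition~\ref{prop:reflection} and Proposition~\ref{prop:boundonskeletons}, and your additional checks (that the integral-affine equivalence of Proposition~\ref{prop:reflection} permutes coordinates and hence transports the facet--arrow bijection of Corollary~\ref{cor:tightfacets}, so reflection preserves tightness, and that the number of valency-$2$ sources strictly drops, so the process terminates) are exactly the details the paper leaves implicit. One caveat: your restriction of (i) to projective varieties rests on reading $\reducedquivers_d$ as consisting only of acyclic quivers; the paper's proof makes no such restriction and in effect uses the characterization of $\reducedquivers_d$ by conditions (i)--(iii) after Definition~\ref{def:skeleton}, which does not exclude oriented cycles (under the strictly acyclic reading the statement itself would fail, e.g.\ for $\moduli(\quiver,0)\cong\mc$ with $\quiver$ a single loop, which is prime but not projective). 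Your own observation in (ii), that the tighten-and-reflect normalization never uses acyclicity, is precisely what covers the general case, so the substance is all there.

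Where you genuinely diverge from the paper is the final counting step in (ii). The paper fixes $\quiver$ from the finite list and counts weights up to the equivalence $\theta\sim\theta'$ iff $\rep(\quiver)^{\theta-ss}=\rep(\quiver)^{\theta'-ss}$: since $\theta$-semistability of a representation depends only on the set of dimension vectors of its subrepresentations, and for dimension vector $(1,\dots,1)$ these are $0$-$1$ vectors, there are finitely many equivalence classes, hence finitely many moduli spaces per quiver. You instead count normal fans, and your argument does go through: for a tight pair $\polytope(\quiver,\theta)$ is full-dimensional in $\floweqs^{-1}(\theta)$ and lies in the nonnegative orthant, hence is pointed, so the maximal sets $A\subseteq\quiver_1$ cutting out nonempty faces are exactly the active sets $A_m=\{a\mid m(a)=0\}$ of the vertices $m$; the normal cone at $m$ is generated by the images of the $\varepsilon_a$ with $a\in A_m$ in the fixed lattice, and the whole fan is the collection of faces of these finitely many cones, so it is indeed determined by the nonemptiness data together with the $\theta$-independent lattice and ray generators, and Proposition~\ref{prop:hille} converts finitely many fans into finitely many isomorphism classes. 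Your route is purely polyhedral and bypasses GIT and semistable loci entirely, at the cost of the pointedness/vertex-cone analysis; the paper's route is shorter and connects directly to the chamber structure of weights alluded to in Remark~\ref{remark:fourier-motzkin}. Both yield complete proofs of (ii).
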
 

\begin{proof} It follows from Propositions~\ref{prop:tightsufficient}, Corollary~\ref{cor:shrinking} and Proposition~\ref{prop:reflection} that any $d$-dimensional  prime toric quiver variety  can be realized by a tight pair $(\quiver,\theta)$ where  $\quiver\in\reducedquivers_d$; the bounds on vertex and arrow sets of the quiver follow by 
Proposition~\ref{prop:boundonskeletons}. It remains to show (ii). 
For a given quiver $\quiver$ we say that the weights $\theta$ and $\theta'$ are equivalent if $\rep(\quiver)^{\theta-ss}=\rep(\quiver)^{\theta'-ss}$;  
this implies  that $\moduli(\quiver,\theta)=\moduli(\quiver,\theta')$. For a given representation $R$ of $\quiver$, the set of weights $\theta$ for which $R$ is $\theta$-semistable 
is determined by the set of dimension vectors of subrepresentations of $R$. Since there are finitely many  possibilities for the dimension vectors of a subrepresentation of a representation with dimension vector $(1,\dots,1)$, up to equivalence there  are only finitely many different weights, hence there are finitely many possible moduli spaces
for a fixed $\quiver$. \end{proof}

\begin{remark} {\rm
Part (i) of Theorem~\ref{thm:finitelymanymoduli} can be directly obtained from the results in \cite{altmann-nill-schwentner-wiercinska} and \cite{altmann-straten}. From the proof of Theorem 7 in \cite{altmann-nill-schwentner-wiercinska} it follows that the bound on the number of vertices and edges hold whenever the canonical weight is tight for a quiver. While in \cite{altmann-nill-schwentner-wiercinska} it is assumed that $\quiver$ has no oriented cycles, their argument for the bound applies to the general case as well. Moreover Lemma 13 in  \cite{altmann-straten} shows that every toric quiver variety can be realized by a pair $(\quiver,\theta)$ where $\quiver$ is tight with the canonical weight. These two results imply part (i) of Theorem~\ref{thm:finitelymanymoduli}.  
}
\end{remark}

\begin{remark}\label{remark:fourier-motzkin} {\rm 
We mention that for a fixed quiver $\quiver$ it is possible to give an algorithm to produce a representative for each of the finitely many equivalence classes of weights. 
The change of the moduli spaces of a given quiver when we vary the weight is studied in \cite{hille:chemnitz}, \cite{hille:canada}, where the inequalities determining the chamber system were given. To find an explicit weight in each chamber one can use the Fourier-Motzkin algorithm. }
\end{remark}

Theorem~\ref{thm:finitelymanymoduli} is sharp, and 
the reductions on the quiver are optimal, in the sense that in general one can not hope for reductions that would yield smaller quivers:   

\begin{proposition}\label{prop:optimal} 
For each natural number $d\ge 2$ there exists a $d$-dimensional toric quiver variety $\moduli(\quiver,\theta)$ with 
$|\quiver_1|=6(d-1)$, $|\quiver_0|=5(d-1)$,  such that  for any other quiver and weight $\quiver',\theta'$ with 
$\moduli(\quiver,\theta)\cong \moduli(\quiver',\theta')$ (isomorphism of toric varieties) we have that 
$|\quiver'_1|\ge |\quiver_1|$ and $|\quiver'_0|\ge |\quiver_0|$.  
\end{proposition}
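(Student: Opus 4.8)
The plan is to construct, for each $d \ge 2$, an explicit prime tight pair $(\quiver,\theta)$ with $\quiver \in \reducedquivers_d$ achieving the extremal bounds $|\quiver_0| = 5(d-1)$ and $|\quiver_1| = 6(d-1)$, and then to argue that no quiver--weight pair can produce an isomorphic toric variety with fewer vertices or arrows. For the construction, I would start from a $3$-regular prime graph $\Gamma \in \sklist_d$ meeting the bounds of Proposition~\ref{prop:boundonskeletons} with equality, namely $|\Gamma_0| = 2d-2$ and $|\Gamma_1| = 3d-3$; such graphs exist for every $d \ge 2$ (for instance a suitable theta-graph or prism-type graph, built so that every vertex has valency exactly $3$). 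Orienting every edge into a freshly inserted sink, as in Definition~\ref{def:skeleton}, yields $\quiver \in \reducedquivers_d$ with $|\quiver_0| = |\Gamma_0| + |\Gamma_1| = 5(d-1)$ and $|\quiver_1| = 2|\Gamma_1| = 6(d-1)$. I would then choose a weight $\theta$ (for example the canonical weight $\delta_\quiver$, using Remark~\ref{remark:combinatorial-tightness}) making $(\quiver,\theta)$ tight, so that $\dim \moduli(\quiver,\theta) = \chi(\quiver) = d$ by Corollary~\ref{cor:tightfacets}(ii), and verify via Theorem~\ref{thm:directprod}(ii) that $\moduli(\quiver,\theta)$ is prime.

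The harder direction is the lower bound: given any other pair $(\quiver',\theta')$ with $\moduli(\quiver',\theta') \cong \moduli(\quiver,\theta)$, I must show $|\quiver'_1| \ge 6(d-1)$ and $|\quiver'_0| \ge 5(d-1)$. By Proposition~\ref{prop:tightsufficient}, Corollary~\ref{cor:shrinking}, and Proposition~\ref{prop:reflection}, any such $(\quiver',\theta')$ can be reduced to a tight pair in $\reducedquivers_d$ without enlarging the vertex or arrow count, so it suffices to prove the bound for tight $(\quiver',\theta') $ with $\quiver' \in \reducedquivers_d$ realizing the same prime variety. The key is to read off an intrinsic invariant of the toric variety that forces the skeleton to be $3$-regular with the maximal number of vertices. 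Since the number of arrows of a tight pair equals the number of facets of $\polytope(\quiver',\theta')$ by Corollary~\ref{cor:tightfacets}(i), and the number of facets (rays of the normal fan) is an invariant of the isomorphism class of the toric variety together with its fan, I would try to arrange the construction so that $\moduli(\quiver,\theta)$ has the maximal possible number of torus-invariant divisors among $d$-dimensional prime toric quiver varieties, namely $6(d-1)$.

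The main obstacle I anticipate is that the number of facets, hence of arrows in a tight representative, is genuinely a geometric invariant only after fixing the toric structure, so I must argue that every tight pair in $\reducedquivers_d$ realizing the given prime variety yields the same facet count. To control this I would pass to the fan description from \cite{hille:canada} used in the proof of Theorem~\ref{thm:directprod}: the lattice of one-parameter subgroups is $\mz^{\quiver'_1}/\langle C_v \mid v \in \quiver'_0\rangle$ of rank $\chi(\quiver') = d$, and the rays are the images of the $\varepsilon_a$. A tight pair gives exactly one ray per arrow and these are all distinct, so $|\quiver'_1|$ equals the number of rays of the fan. Since the fan is determined by the variety, this pins down $|\quiver'_1|$ for all tight representatives; choosing the construction to maximize the ray count over $\reducedquivers_d$ then forces $|\quiver'_1| \ge 6(d-1)$. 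For the vertex bound I would use $|\quiver'_0| = |\quiver'_1| - \chi(\quiver') + \chi_0(\quiver') = 6(d-1) - d + 1 = 5(d-1)$ for a connected tight quiver, so the two inequalities follow together once the arrow bound is established. I would close by verifying that the specific $\Gamma$ chosen above does attain the maximal ray count, completing the optimality claim.
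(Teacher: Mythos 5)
Your proposal is correct and takes essentially the same route as the paper: the lower bounds come from the observation that the number of facets of $\polytope(\quiver',\theta')$ equals the number of rays of the fan (a toric invariant) and is bounded above by $|\quiver'_1|$, with equality for tight pairs by Corollary~\ref{cor:tightfacets}, while the vertex bound follows from $|\quiver'_0|=|\quiver'_1|-\chi(\quiver')+\chi_0(\quiver')$ together with $\chi(\quiver')=d$ for tight pairs; the extremal example (sink-subdivision of a $3$-regular graph with its canonical weight) is exactly the paper's Example~\ref{example:2sharp}, so your detour through $\reducedquivers_d$, primeness, and ``maximal ray count'' is harmless but unnecessary. One caveat: primeness plus $3$-regularity of $\Gamma$ does \emph{not} guarantee that $(\quiver,\delta_{\quiver})$ is tight --- by Remark~\ref{remark:combinatorial-tightness} you need $\Gamma$ to have no edge cut of size at most $2$ (for instance, two copies of $K_4$ minus an edge joined by two edges is prime and $3$-regular yet has a $2$-edge-cut, and then no weight at all makes the sink-subdivision tight), so you must choose a $3$-edge-connected $\Gamma$, as your theta/prism instances and the paper's ladder-with-diagonals indeed are.
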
 

\begin{proof} The number of inequalities defining $\polytope(\quiver,\theta)$ in its affine span is obviously bounded by the number of arrows of $\quiver$, 
so the number of facets of a quiver polyhedron $\polytope(\quiver,\theta)$ is bounded by $|\quiver_1|$.  
On the other hand the number of facets is an invariant of the corresponding toric variety, as it equals the number of rays in the toric fan of $\moduli(\quiver,\theta)$. 
Therefore by Corollary~\ref{cor:tightfacets} it is sufficient to show the existence of a tight  $(\quiver,\theta)$ with $|\quiver_1|=6(d-1)$ and $|\quiver_0|=5(d-1)$. 
Such a pair $(\quiver,\theta)$ is provided in Example~\ref{example:2sharp}. 
\end{proof} 

\begin{example}\label{example:2sharp}
{\rm  For  $d\ge 3$ consider the graph below  with 
$2(d-1)$ vertices. Removing any two edges from this graph we obtain a connected graph. Now let $\quiver$ be the quiver obtained by putting a sink on each of the edges (so the graph below is the skeleton of $\quiver$).  Then $(\quiver,\delta_{\quiver})$ is tight by Remark~\ref{remark:combinatorial-tightness} ($\delta_{\quiver}$ takes value $2$ on each sink and value $-3$ on each source).  
\[\begin{tikzpicture}
\draw (0,0)--(3,0)--(3,1)--(0,1)--(0,0) (4,0)--(6,0)--(6,1)--(4,1)--(4,0) (1,0)--(1,1) (2,0)--(2,1)  (3,0)--(3,1) (4,0)--(4,1) (5,0)--(5,1);
\draw [dotted] (3,0)--(4,0)  (3,1)--(4,1); 
\draw (0,1) to  (6,0); 
\draw (0,0) to (6,1);
\filldraw (0,0) circle (1.5pt)  (1,0) circle (1.5pt)  (2,0) circle (1.5pt)  (3,0) circle (1.5pt)  (4,0) circle (1.5pt)  (5,0) circle (1.5pt)  (6,0) circle (1.5pt) 
(0,1) circle (1.5pt)  (1,1) circle (1.5pt)  (2,1) circle (1.5pt)  (3,1) circle (1.5pt)  (4,1) circle (1.5pt)  (5,1) circle (1.5pt)  (6,1) circle (1.5pt);
\end{tikzpicture} 
\]}
\end{example} 

Relaxing the condition on tightness it is possible to come up with a shorter list of quivers whose moduli spaces exhaust all possible toric quiver varieties. 
A key role is played by the following statement: 

\begin{proposition}\label{prop:addanarrow} Suppose that $\quiver$ has no oriented cycles and $a\in\quiver_1$ is an arrow such that contracting it we get a quiver (i.e. the quiver $\hat\quiver$ described in Definition~\ref{def:contractable}) 
that has no oriented cycles. Then for a sufficiently   large integer $d$ we have that $a$ is contractable for the pair $(\quiver,\theta+d(\varepsilon_{a^+}-\varepsilon_{a^-}))$, 
where $\varepsilon_v\in\mz^{\quiver_0}$ stands for the characteristic function of $v\in\quiver_0$. 
\end{proposition}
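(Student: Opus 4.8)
The plan is to apply the contractibility criterion from Lemma~\ref{lemma:contractable}(ii), which says that $a$ is contractable for a pair $(\quiver,\theta')$ precisely when the inequality $x(a)\ge 0$ is a consequence of the remaining inequalities $x(b)\ge 0$ ($b\in\quiver_1\setminus\{a\}$) on the affine space $\floweqs^{-1}(\theta')$. So I would fix the weight $\theta_d:=\theta+d(\varepsilon_{a^+}-\varepsilon_{a^-})$ and try to show that for $d$ large, every point of the polyhedron $\{x\in\floweqs^{-1}(\theta_d)\mid x(b)\ge 0\ \forall b\neq a\}$ automatically satisfies $x(a)\ge 0$.

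First I would understand how $\theta_d$ changes the flow equations. Adding $d(\varepsilon_{a^+}-\varepsilon_{a^-})$ to the weight $\theta$ shifts the balance condition at the two endpoints of $a$: it forces more flow to terminate at $a^+$ and less to originate at $a^-$. Since $\floweqs(\varepsilon_a)=\varepsilon_{a^+}-\varepsilon_{a^-}$, a clean way to track this is to note that $x\in\floweqs^{-1}(\theta_d)$ if and only if $x-d\varepsilon_a\in\floweqs^{-1}(\theta)$. Thus translation by $d\varepsilon_a$ carries $\floweqs^{-1}(\theta)$ isomorphically onto $\floweqs^{-1}(\theta_d)$, sending the coordinate $x(a)$ to $x(a)-d$ while leaving all other coordinates $x(b)$ unchanged. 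Under this translation the target polyhedron $\{x\in\floweqs^{-1}(\theta_d)\mid x(b)\ge 0\ \forall b\neq a\}$ corresponds to $P_0:=\{y\in\floweqs^{-1}(\theta)\mid y(b)\ge 0\ \forall b\neq a\}$, and the condition $x(a)\ge 0$ becomes $y(a)\ge -d$. So the claim reduces to showing that the coordinate function $y\mapsto y(a)$ is bounded below on $P_0$, i.e. that $y(a)\ge -d$ holds on all of $P_0$ once $d$ is large enough.

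The key step is to establish that $y(a)$ is bounded below on $P_0$. This is exactly where the hypothesis that $\hat\quiver$ has no oriented cycles (together with $\quiver$ having none) does the work, and I expect it to be the main obstacle. The point is that $P_0$ is the feasible set of $\floweqs^{-1}(\theta)$ with all sign constraints \emph{except} the one on $a$, which means $P_0$ equals the polyhedron $\polytope(\hat\quiver,\hat\theta)$ of the contracted quiver after the identification of $\hat\quiver_1$ with $\quiver_1\setminus\{a\}$ (using the explicit formula for $x(a)$ from the proof of Lemma~\ref{lemma:contractable}). Since $\hat\quiver$ has no oriented cycles, $\polytope(\hat\quiver,\hat\theta)$ is a \emph{polytope} (bounded), as noted right after the definition of quiver polytopes. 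A bounded polyhedron has every linear functional bounded on it, so in particular $y(a)$—expressed via that linear formula in the coordinates $y(b)$, $b\neq a$—attains a finite minimum $-d_0$ on $P_0$. Choosing any integer $d\ge d_0$ then gives $y(a)\ge -d$ throughout $P_0$, which is precisely the contractibility criterion for $(\quiver,\theta_d)$.

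To finish I would verify the reduction is airtight: that $\hat\theta$ for the pair $(\quiver,\theta_d)$ is the same as for $(\quiver,\theta)$ so that $\hat\quiver$ remains acyclic independently of $d$ (the contraction merges $a^-,a^+$ and sets the weight there to $\theta(a^-)+\theta(a^+)$, and the added term $d(\varepsilon_{a^+}-\varepsilon_{a^-})$ cancels upon summing over the glued vertex, so $\hat\theta$ is unaffected by $d$), and that the formula $x(a)=\theta(a^+)-\sum_{b^+=a^+}x(b)+\sum_{b^-=a^+}x(b)$ used to embed $P_0$ is valid with the weight $\theta$. The only subtlety to keep in mind is that $P_0$ must genuinely be bounded; this is guaranteed by acyclicity of $\hat\quiver$, and I would make explicit that boundedness of $\polytope(\hat\quiver,\hat\theta)$—not merely nonemptiness—is what supplies the finite lower bound and hence the existence of a suitable $d$.
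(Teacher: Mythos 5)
Your proof is correct and takes essentially the same route as the paper's: both apply the criterion of Lemma~\ref{lemma:contractable}(ii) and reduce the claim to the boundedness of $\polytope(\hat\quiver,\hat\theta)$, which holds because $\hat\quiver$ has no oriented cycles, so that the coordinates $x(b)$, $b\neq a$, are uniformly bounded and the flow equation forces $x(a)\ge 0$ once $d$ is large. The only cosmetic difference is that you make the $d$-dependence explicit via the translation $x\mapsto x-d\varepsilon_a$ identifying $\floweqs^{-1}(\theta_d)$ with $\floweqs^{-1}(\theta)$, whereas the paper reads the same shift by $d$ directly off the flow equation at the vertex $a^-$.
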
 

\begin{proof} Set $\psi_d = \theta+d(\varepsilon_{a^+}-\varepsilon_{a^-})$, and note that $\hat\psi_d = \hat\theta$ for all d. Considering the embeddings   $\pi:\floweqs^{-1}(\psi_d)\to \floweqs'^{-1}(\hat\theta)$ described in the proof of Lemma~\ref{lemma:contractable}, we have that for any $d$, any $y\in\floweqs^{-1}(\psi_d)$ and $b\in\quiver_1\setminus\{a\}$, \[\min\{x(b)\mid x\in\polytope(\hat\quiver,\hat\theta)\}\leq y(b) \leq \max\{x(b)\mid x\in\polytope(\hat\quiver,\hat\theta)\}\] 
Since we assumed that $\hat\quiver$ has no oriented cycles, the minimum and the maximum in the inequality above are finite. Now considering the arrows incident to $a^-$ we obtain that for any $x\in\floweqs^{-1}(\psi_d)$ we have $x(a) = d-\theta(a^-)+\sum_{b^+ = a^-}x(b) - \sum_{b^- = a^-,b\neq a}x(b)$. Thus for $d \geq \theta(a^-)-\min\{ \sum_{b^+ = a^-}x(b) - \sum_{b^- = a^-,b\neq a}x(b) \mid x\in\floweqs'^{-1}(\hat\theta)\}$ the arrow $a$ is contractable for $(\quiver,\psi_d)$ by Lemma~\ref{lemma:contractable}.
\end{proof} 

For $d\ge 2$ introduce a partial  ordering $\ge$ on $\sklist_d$: we set $\Gamma\ge \Gamma'$ if $\Gamma'$ is obtained from $\Gamma$ by contracting an edge, and take the transitive closure of this relation. 
Now for each positive integer $d\ge 2$ denote by $\sklist'_d\subseteq\sklist_d$ the set of undirected graphs $\Gamma\in \sklist_d$  that are maximal with respect to the relation $\ge$, and set $\sklist'_1:=\sklist_1$. 
It is easy to see that for $d\ge 2$,  $\sklist'_d$ consists of $3$-regular graphs (i.e. graphs in which all vertices have valency $3$). Now denote by $\reducedquivers'_d$ the quivers which are obtained by putting a sink on each edge from a graph from $\sklist'_d$. 

\begin{theorem}\label{thm:3regular} 
For $d\ge 2$ any prime $d$-dimensional projective toric quiver variety is isomorphic to $\moduli(\quiver,\theta)$ where $\quiver\in\reducedquivers'_d$.
\end{theorem}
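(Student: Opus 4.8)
The plan is to start from an arbitrary prime $d$-dimensional projective toric quiver variety and reduce it, through the tools assembled in this section, to one of the form $\moduli(\quiver',\theta')$ with $\quiver'\in\reducedquivers'_d$. By Theorem~\ref{thm:finitelymanymoduli}(i) we may already assume our variety is realized by a tight pair $(\quiver,\theta)$ with $\quiver\in\reducedquivers_d$; in particular $\quiver$ has no oriented cycles, is prime, has no arrow joining two valency-$2$ vertices, and every valency-$2$ vertex is a sink. So $\skeleton(\quiver)=\Gamma\in\sklist_d$. The task is to pass from such a $\Gamma$ to some $\Gamma^*\in\sklist'_d$ lying above it in the ordering $\ge$, while tracking the effect on the moduli space.

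The heart of the argument is the relation $\ge$ on $\sklist_d$, generated by edge contractions. First I would observe that if $\Gamma\notin\sklist'_d$, then (by maximality) there is an edge $e$ of $\Gamma$ whose contraction yields a graph $\Gamma''$ still in $\sklist_d$ — i.e. $\Gamma''$ is still prime with all valencies at least $3$. I must then translate a single such edge-contraction of skeletons into an operation on quivers that does not change the moduli space. The natural candidate is Proposition~\ref{prop:addanarrow}: contracting the edge $e$ of the skeleton corresponds to contracting the corresponding arrow of the quiver. Since $\quiver$ has no oriented cycles and the contracted quiver $\hat\quiver$ still has no oriented cycles (one checks this survives the sink/orientation bookkeeping of Definition~\ref{def:skeleton}), Proposition~\ref{prop:addanarrow} lets me replace the weight $\theta$ by $\theta+d_0(\varepsilon_{a^+}-\varepsilon_{a^-})$ for large $d_0$, making the arrow $a$ contractable \emph{without changing $\moduli(\quiver,\theta)$}, because reweighting by an integer multiple of $(\varepsilon_{a^+}-\varepsilon_{a^-})$ preserves the semistable locus (equivalently, the lattice of one-parameter subgroups and the fan of Theorem~\ref{thm:directprod}'s description are unchanged). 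Contracting then produces a pair realizing the same toric variety with skeleton $\Gamma''$ strictly larger in the $\ge$-order.

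Because $\ge$ is defined on the finite set $\sklist_d$ (finite by Proposition~\ref{prop:boundonskeletons}) and each step strictly increases the skeleton, this process terminates at some $\Gamma^*\in\sklist'_d$, which by the stated fact consists of $3$-regular graphs. At the end I re-express the resulting pair in the standard $\reducedquivers'_d$ form: put a sink on every edge of $\Gamma^*$, which is exactly the definition of $\reducedquivers'_d$. It remains to confirm that the quiver so obtained does realize our variety; for this I would use the reduction moves of Corollary~\ref{cor:shrinking} and Proposition~\ref{prop:reflection} to normalize orientations along the valency-$2$ vertices, exactly as in the proof of Theorem~\ref{thm:finitelymanymoduli}(i), so that every edge of the skeleton carries a sink and the result lands in $\reducedquivers'_d$.

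The main obstacle I anticipate is the bookkeeping that guarantees \emph{acyclicity is preserved} under the arrow-contraction dictated by a skeleton edge-contraction, together with checking that the large reweighting of Proposition~\ref{prop:addanarrow} genuinely leaves $\moduli(\quiver,\theta)$ unchanged rather than merely integral-affinely comparable. One has to verify that contracting an arrow of a quiver in $\reducedquivers_d$ (after possibly reorienting via Proposition~\ref{prop:reflection}) corresponds cleanly to contracting the associated edge of the skeleton and does not accidentally create an oriented cycle or violate the valency-$3$ condition on $\Gamma''$; and one must argue that the hypothesis of Proposition~\ref{prop:addanarrow} — that $\hat\quiver$ has no oriented cycles — is met at each step. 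The subtle point is that $\moduli(\quiver,\theta)\cong\moduli(\quiver,\psi_{d_0})$ requires the reweighting to stay within one GIT chamber (cf. Remark~\ref{remark:fourier-motzkin}); I would justify this by noting that for thin representations a subrepresentation's semistability is governed only by its dimension-vector inequalities, and adding a multiple of $\varepsilon_{a^+}-\varepsilon_{a^-}$ changes each such inequality by the same bounded amount that is swamped once $a$ becomes contractable, so the variety is literally unchanged.
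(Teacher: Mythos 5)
There is a genuine gap, and it is fatal in two linked places. First, you have the partial order on $\sklist_d$ backwards. By Definition, $\Gamma\ge\Gamma'$ when $\Gamma'$ is obtained from $\Gamma$ by contracting an edge, so contraction moves \emph{down} in the order; the maximal elements ($\sklist'_d$, the $3$-regular graphs, which have the largest possible number of vertices $2d-2$) are precisely those that are \emph{not} contractions of anything else in $\sklist_d$. Your step ``if $\Gamma\notin\sklist'_d$ then some edge-contraction of $\Gamma$ stays in $\sklist_d$'' characterizes non-\emph{minimality}, not non-maximality, and your claim that contracting produces a skeleton ``strictly larger in the $\ge$-order'' contradicts the definition. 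Consequently your iteration of contractions descends toward the minimal elements of $\sklist_d$ and can never terminate at a $3$-regular graph; even if every step were moduli-preserving, you would land in the wrong set.

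Second, the steps are not moduli-preserving, because you misread Proposition~\ref{prop:addanarrow}. It does \emph{not} assert $\moduli(\quiver,\theta)\cong\moduli(\quiver,\theta+d_0(\varepsilon_{a^+}-\varepsilon_{a^-}))$; replacing $\theta$ by $\psi_{d_0}$ generally moves to a different GIT chamber and changes the variety. What it asserts is that $a$ is contractable \emph{for the new weight}, i.e.\ $\moduli(\quiver,\psi_{d_0})\cong\moduli(\hat\quiver,\hat\theta)$, the key point being that the contracted weight $\hat\psi_{d_0}=\hat\theta$ does not depend on $d_0$. Note also that from a tight pair no arrow is contractable at the original weight (Definition~\ref{def:contractable}(iii)), and by Corollary~\ref{cor:tightfacets} each arrow of a tight pair gives a facet, so contracting at the weight $\theta$ necessarily drops a ray of the fan and changes the variety: ``contract without changing $\moduli(\quiver,\theta)$'' is impossible from a tight pair. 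The proposition must therefore be used in the opposite, \emph{un-contracting} direction, which is the paper's argument: having realized the variety by a tight pair $(\quiver,\theta)$ with skeleton $\Gamma$ (Theorem~\ref{thm:finitelymanymoduli}(i)), choose $\Gamma^*\in\sklist'_d$ with $\Gamma^*\ge\Gamma$ in the finite poset $\sklist_d$, let $\quiver^*\in\reducedquivers'_d$ be the quiver with a sink on every edge of $\Gamma^*$, and note that $(\quiver,\theta)$ is reached from $\quiver^*$ by a sequence of arrow contractions; then apply Proposition~\ref{prop:addanarrow} once per step, each time lifting the weight from the smaller quiver to the bigger one so that the contraction is an integral-affine equivalence of polytopes, producing $\theta^*$ with $\moduli(\quiver^*,\theta^*)\cong\moduli(\quiver,\theta)$. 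Your draft never runs the argument in this direction, and the invariance claim it substitutes is false.
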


\begin{proof} This is an immediate consequence of Theorem~\ref{thm:finitelymanymoduli} and Proposition~\ref{prop:addanarrow}. 
\end{proof} 

\begin{example}\label{example:threedim} {\rm $\sklist'_3$ consists of two graphs: 

\begin{tikzpicture} 
\foreach \x in {(0,0),(1,0),(0,1),(1,1)} \filldraw \x circle (2pt); 
\draw (0,0)--(1,0)--(1,1)--(0,1)--(0,0); \draw (0,0) to [out=135,in=225] (0,1); \draw (1,0) to [out=45,in=315] (1,1);
\end{tikzpicture} 
\qquad \qquad 
\begin{tikzpicture} 
\foreach \x in {(0,0),(1,0),(0,1),(1,1)} \filldraw \x circle (2pt); 
\draw (0,0)--(1,0)--(1,1)--(0,1)--(0,0); \draw (0,0)--(1,1); \draw (1,0)--(0,1);
\end{tikzpicture} 

Now put a sink on each edge of the above graphs. The first of the two resulting quivers is not tight for the canonical weight. After tightening we obtain the following two quivers among whose moduli spaces all $3$-dimensional prime projective toric quiver varieties occur: 

\begin{tikzpicture}[>=open triangle 45]   
\foreach \x in {(0,0),(0,1),(0,2),(-1,1),(0,1),(1,0),(1,2),(1,1),(2,1)} \filldraw \x circle (1.5pt); 
 \draw [->] (0,0) to (1,0);  \draw [->] (0,2) to (1,2);  \draw [->] (0,0) to (0,1);  \draw [->] (0,2) to (0,1);   \draw [->] (0,0) to (-1,1);   \draw [->] (0,2) to (-1,1);   \draw [->] (1,2) to (1,1); \draw [->] (1,0) to (1,1);  \draw [->] (1,0) to (2,1);  \draw [->] (1,2) to (2,1);
\end{tikzpicture} 
\qquad \qquad 
\begin{tikzpicture}[>=open triangle 45]   
\foreach \x in {(0,0),(1,0),(2,0),(1,1),(1,2),(0,1),(2,1),(0.5,0.5),(1.5,0.5),(1,1.5)} \filldraw \x circle (1.5pt); 
\draw [->] (0,0) to (0,1); \draw [->] (1,2) to (0,1); \draw [->] (0,0) to (1,0); \draw [->] (2,0) to (1,0); \draw [->] (2,0) to (2,1); \draw [->] (1,2) to (2,1); \draw [->] (0,0) to (0.5,0.5); \draw [->] (1,1) to (0.5,0.5); \draw [->] (1,1) to (1,1.5); \draw [->] (1,2) to (1,1.5);  \draw [->] (2,0) to (1.5,0.5); \draw [->] (1,1) to (1.5,0.5);
\end{tikzpicture}
 
}\end{example}


\section{The $2$-dimensional case} \label{sec:2-dimensional} 

As an illustration of the general classification scheme explained in Section~\ref{sec:reductions}, 
we quickly reproduce the classification of $2$-dimensional toric quiver varieties (this result is known, see  Theorem 5.2 in \cite{hille:chemnitz} and Example 6.14 in \cite{fei}): 

\begin{proposition}\label{prop:twodim}
(i) A $2$-dimensional toric quiver variety is isomorphic to one of the following: 

\noindent The projective plane $\mathbb{P}^2$, the blow up of $\mathbb{P}^2$ in one, two, or three points in general position, or 
$\mathbb{P}^1\times\mathbb{P}^1$. 

(ii) The above varieties are realized (in the order of their listing) by the following quiver-weight pairs:  
\[
\begin{tikzpicture}[>=open triangle 45,scale=0.8] 
\foreach \x in {(0,0),(1,0),(1,1),(1,-1),(2,0)} \filldraw \x circle (2pt); 
\foreach \y in {(1,0),(1,1),(1,-1)} \draw  [->]  (0,0)--\y; 
\foreach \x in {(1,0),(1,1),(1,-1)} \draw  [<-] \x--(2,0); 
\node [left] at (0,0) {$-1$};  \node [above] at (1,1) {$1$}; \node [above] at (1,0) {$1$}; \node [below] at (1,-1) {$1$}; \node [right] at (2,0) {$-2$}; 
\draw [dotted] (3,-1.5)--(3,1.5);
\end{tikzpicture}
\begin{tikzpicture}[>=open triangle 45,scale=0.8] 
\foreach \x in {(0,0),(1,0),(1,1),(1,-1),(2,0)} \filldraw \x circle (2pt); 
\foreach \y in {(1,0),(1,1),(1,-1)} \draw  [->]  (0,0)--\y; 
\foreach \x in {(1,0),(1,1),(1,-1)} \draw  [<-] \x--(2,0); 
\node [left] at (0,0) {$-3$};  \node [above] at (1,1) {$2$}; \node [above] at (1,0) {$1$}; \node [below] at (1,-1) {$2$}; \node [right] at (2,0) {$-2$}; 
\draw [dotted] (3,-1.5)--(3,1.5);
\end{tikzpicture}
\begin{tikzpicture}[>=open triangle 45,scale=0.8] 
\foreach \x in {(0,0),(1,0),(1,1),(1,-1),(2,0)} \filldraw \x circle (2pt); 
\foreach \y in {(1,0),(1,1),(1,-1)} \draw  [->]  (0,0)--\y; 
\foreach \x in {(1,0),(1,1),(1,-1)} \draw  [<-] \x--(2,0); 
\node [left] at (0,0) {$-4$};  \node [above] at (1,1) {$3$}; \node [above] at (1,0) {$2$}; \node [below] at (1,-1) {$2$}; \node [right] at (2,0) {$-3$}; 
\draw [dotted] (3,-1.5)--(3,1.5);
\end{tikzpicture} 
\begin{tikzpicture}[>=open triangle 45,scale=0.8] 
\foreach \x in {(0,0),(1,0),(1,1),(1,-1),(2,0)} \filldraw \x circle (2pt); 
\foreach \y in {(1,0),(1,1),(1,-1)} \draw  [->]  (0,0)--\y; 
\foreach \x in {(1,0),(1,1),(1,-1)} \draw  [<-] \x--(2,0); 
\node [left] at (0,0) {$-3$};  \node [above] at (1,1) {$2$}; \node [above] at (1,0) {$2$}; \node [below] at (1,-1) {$2$}; \node [right] at (2,0) {$-3$}; 
\draw [dotted] (3,-1.5)--(3,1.5);
\end{tikzpicture} 
\begin{tikzpicture}[>=open triangle 45]  \draw [->] (0,0.8) to [out=45, in=135] (1,0.8); \node [above] at (0,0) {}; \node [above] at (1,0) {}; 
\draw  [->] (0,0.8) to [out=315, in=225] (1,0.8) ;
\filldraw (0,0.8) circle (1.5pt) (1,0.8) circle (1.5pt); 
 \draw [->] (0,1.8) to [out=45, in=135] (1,1.8); 
\draw  [->] (0,1.8) to [out=315, in=225] (1,1.8) ;
\filldraw (0,1.8) circle (1.5pt) (1,1.8) circle (1.5pt); 
\node [left] at (0,0.8) {$-1$}; \node [left] at (0,1.8) {$-1$}; \node [right] at (1,0.8) {$1$}; \node [right] at (1,1.8) {$1$};
\end{tikzpicture}
\]
\end{proposition}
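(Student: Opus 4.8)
The plan is to first observe that all five surfaces in the list are projective, so the statement amounts to classifying the \emph{projective} $2$-dimensional toric quiver varieties, equivalently those $\moduli(\quiver,\theta)$ with $\quiver$ having no oriented cycles, and then to separate the prime case from the non-prime one. By Theorem~\ref{thm:directprod}(iii) every $2$-dimensional toric quiver variety is a product of prime toric quiver varieties, hence is either prime or a product of two $1$-dimensional prime ones. In the non-prime case each factor is a $1$-dimensional projective toric variety, hence isomorphic to $\mathbb{P}^1$, so the variety is $\mathbb{P}^1\times\mathbb{P}^1$; this is realized by the disjoint union of two Kronecker quivers, the last pair in (ii). Everything thus reduces to the prime case.

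Next I would pin down the quiver. By Theorem~\ref{thm:3regular} a prime $2$-dimensional projective toric quiver variety is isomorphic to $\moduli(\quiver,\theta)$ for some $\quiver\in\reducedquivers'_2$, so I first determine $\reducedquivers'_2$. A prime graph $\Gamma$ with $\chi(\Gamma)=2$ and all valencies at least $3$ satisfies $|\Gamma_1|=|\Gamma_0|+1$ and $2|\Gamma_1|\ge 3|\Gamma_0|$, forcing $|\Gamma_0|=2$, $|\Gamma_1|=3$; being prime with $\chi\ge 2$ it has no loops, so $\Gamma$ is the unique graph consisting of two vertices joined by three parallel edges. This graph is already $3$-regular, whence $\sklist'_2=\sklist_2=\{\Gamma\}$ and $\reducedquivers'_2$ consists of the single quiver $\quiver^\ast$ with two sources $s_1,s_2$, three sinks $t_1,t_2,t_3$, and the six arrows $a_{ij}\colon s_i\to t_j$. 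Hence every prime $2$-dimensional projective toric quiver variety is $\moduli(\quiver^\ast,\theta)$ for a suitable weight $\theta$.

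The computational core is to describe $\polytope(\quiver^\ast,\theta)$ as $\theta$ varies. Writing $x_{ij}$ for the coordinate on $a_{ij}$, the flow equations identify $\polytope(\quiver^\ast,\theta)$ with the $2\times 3$ transportation polygon of nonnegative matrices with row sums $-\theta(s_1),-\theta(s_2)$ and column sums $\theta(t_1),\theta(t_2),\theta(t_3)$. Eliminating all but $(x_{11},x_{12})$, I would record the six resulting inequalities and observe that their outward normals are, independently of $\theta$, exactly $\pm(1,0),\pm(0,1),\pm(1,1)$; consequently the normal fan of every $\polytope(\quiver^\ast,\theta)$ is a complete fan whose rays form a subset of these six directions. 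A direct determinant check shows that any two cyclically adjacent directions among these six form a basis of $\mz^2$, so each such fan is smooth and the surface is determined by its cyclically ordered ray set.

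It then remains to enumerate the combinatorial types that occur as $\theta$ ranges over the weights making $\polytope(\quiver^\ast,\theta)$ two-dimensional, and to identify each: the triangle gives the fan of $\mathbb{P}^2$; the hexagon (all six normals) gives the degree-six del Pezzo surface, i.e. the blow-up of $\mathbb{P}^2$ at three points in general position; the pentagon gives the blow-up at two points; and the two types of quadrilateral give the blow-up at one point (when the four rays subdivide the $\mathbb{P}^2$ fan) and $\mathbb{P}^1\times\mathbb{P}^1$ (the square fan, which is precisely the non-prime case already found). To prove (ii) I would compute $\polytope(\quiver^\ast,\theta)$ for each explicit pair in the figures and read off its normal fan, checking that the four weights produce a triangle, a quadrilateral subdividing the $\mathbb{P}^2$ fan, a pentagon, and a hexagon respectively. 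The main obstacle I anticipate is this final case analysis: organizing the degenerations of the hexagonal transportation polygon so that precisely the five listed surfaces arise and no others, and correctly separating the two quadrilateral types so that the outcome is consistent with the prime/non-prime split made at the outset.
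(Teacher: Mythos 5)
Your proposal is correct and takes essentially the same approach as the paper's proof: both reduce, via the structure results of Section~\ref{sec:reductions} (Theorems~\ref{thm:directprod} and~\ref{thm:3regular}), to the Kronecker quiver and the single quiver in $\reducedquivers'_2$, then express the quiver polytope in two free coordinates as a polygon whose edge normals lie among $\pm(1,0),\pm(0,1),\pm(1,1)$, and classify the possible normal fans. The only difference is presentational: the paper quotes the identification of the five resulting fans as well known, whereas you verify it directly via the unimodularity (determinant) check and the explicit split into triangle, quadrilateral, pentagon and hexagon cases.
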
 

\begin{proof} 
$\reducedquivers_1$ consists only of the Kronecker quiver. 
The only weights yielding a non-empty moduli space  are $(-1,1)$ and its positive integer multiples, hence the corresponding moduli space is $\mathbb{P}^1$. 
Thus $\mathbb{P}^1\times \mathbb{P}^1$, the product of two projective lines occurs as a $2$-dimensional toric quiver variety, say for 
the disjoint union of two copies of 
\begin{tikzpicture}[>=open triangle 45]  \draw [->] (0,0) to [out=45, in=135] (1,0); \node [above] at (0,0) {}; \node [above] at (1,0) {}; 
\draw  [->] (0,0) to [out=315, in=225] (1,0) ;
\filldraw (0,0) circle (1.5pt) (1,0) circle (1.5pt); 
\node [left] at (0,0) {$-1$};  \node [right] at (1,0) {$1$}; \
\end{tikzpicture}. 

$\sklist_2$ consists of the graph with  two vertices and  three edges connecting them (say by Proposition~\ref{prop:boundonskeletons}). 
Thus  $\reducedquivers'_2$ consists of the following quiver: 
\[
\begin{tikzpicture}[>=open triangle 45] 
\foreach \x in {(0,0),(1,0),(1,1),(1,-1),(2,0)} \filldraw \x circle (2pt); 
\foreach \y in {(1,0),(1,1),(1,-1)} \draw  [->]  (0,0)--\y; 
\foreach \x in {(1,0),(1,1),(1,-1)} \draw  [<-] \x--(2,0);
\node at (0,1) {$A$:};
\end{tikzpicture}
\] 
Choosing a spanning tree $\subtree$ in $\quiver$,  the $x(a)$ with $a\in \quiver_1\setminus \subtree_1$ can be used as free coordinates in $\affspan(\polytope(\quiver,\theta))$. 
For example, take in the quiver $A$  the spanning tree with thick arrows in the following figure:  
\[
\begin{tikzpicture}[>=open triangle 45,scale=0.8]
\node [left] at (1.5,2) {$\affspan(\polytope(A,\theta))$:}; 
\node [left] at (0,0) {$\theta_1$};   \node [above] at (3,2) {$\theta_2$}; \node [above] at (3,0) {$\theta_3$}; \node [below] at (3,-2) {$\theta_4$}; 
\node [right]  at (6,0) {$\theta_5$}; \node [right] at (8,2) {$ \sum_{i=1}^5\theta_i=0$}; 
\foreach \x in {(0,0),(3,0),(3,2),(3,-2),(6,0)} \filldraw \x circle (2pt); 
\foreach \y in {(3,0),(3,2)} \draw  [->]  (0,0)--\y;  \draw [->, very thick] (0,0)--(3,-2);
\foreach \x in {(3,0),(3,2),(3,-2)} \draw  [<-,very thick] \x--(6,0);
\node [above] at (1.5,1) {$x$}; \node [above] at (1.5,0) {$y$}; \node [left] at (1.5,-1) {$-\theta_1-x-y$};
\node [right] at (4.5,1.1) {$\theta_2-x$}; 
\node [above] at (4.5,0) {$\theta_3-y$}; \node [right] at (4.5,-1.1) {$\theta_4+\theta_1+x+y$};
\end{tikzpicture}
\]

\begin{figure}
\caption{{\it The polytope $\polytope(A,\theta)$}}
\begin{tikzpicture}[scale=0.7]
 \draw [->] (0,0)--(5,0); \draw [->] (0,0)--(0,5); \node [right] at (5,0) {$x$}; \node [above] at (0,5) {$y$}; \draw (-0.3,4.8)--(4.8,-0.3); 
 \node [above right] at (2.1,2.1) {$x+y=-\theta_1$};
\draw (-0.3,4)--(1,4); \node [left] at (-0.3,4) {$y=\theta_3$};
\draw (3.7,1.5)--(3.7,-0.3); \node [below] at (3.7,-0.3) {$x=\theta_2$}; 
\draw (-0.3,1.3)--(1.3,-0.3); \node [left] at (-0.3,1.3) {$x+y=-\theta_1-\theta_4$};
\draw [pattern=north east lines] (0,1)--(1,0)--(3.7,0)--(3.7,0.8)--(0.5,4)--(0,4)--(0,1);
\node  at (9,5) {Defining inequalitites:}; 
\node at (9,4) {$ 0 \le x\le \theta_2$};  
\node at (9,3) {$0 \le y\le \theta_3 $}; 
\node at (9,2) {$-\theta_4-\theta_1\le x+y\le -\theta_1$.};
\end{tikzpicture}
\end{figure}
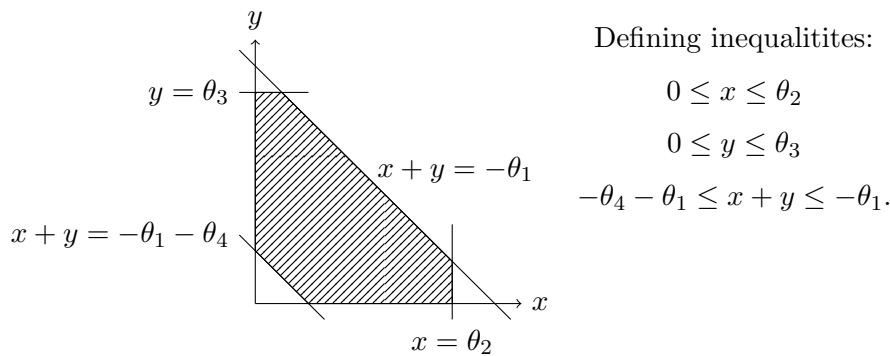 

Clearly $\polytope(A,\theta)$ is integral-affinely equivalent to the polytope in $\mathbb{R}^2=\{(x,y)\mid x,y\in \mathbb{R}\}$ shown on 
Figure~1. 
Depending on the order of $-\theta_1,\theta_3,-\theta_1-\theta_4,\theta_2$, its normal fan is one of the following: 
\[\begin{tikzpicture} \draw (0,0)--(1.5,0) (-1.5,-1.5)--(0,0) (0,0)--(0,1.5);
\node at (0.2,-0.4) {$\sigma_1$}; \node at (1,0.5) {$\sigma_2$}; \node at (-0.4,0.4) {$\sigma_3$}; 
\end{tikzpicture} 
\qquad
\begin{tikzpicture} \draw (0,0)--(1.5,0) (0,0)--(0,1.5) (-1.5,0)--(0,0) (0,-1.5)--(0,0);
\node at (0.8,0.8) {$\sigma_1$}; \node at (0.8,-0.8) {$\sigma_4$}; \node at (-0.8,-0.8) {$\sigma_3$}; \node at (-0.8,0.8) {$\sigma_2$}; 
\end{tikzpicture} 
\qquad 
\begin{tikzpicture} \draw (0,0)--(1.5,0) (0,0)--(0,1.5) (-1.5,-1.5)--(1.5,1.5);
\node at (0.2,-0.4) {$\sigma_1$}; \node at (1,0.5) {$\sigma_2$}; \node at (0.6,1) {$\sigma_3$}; \node at (-0.4,0.4) {$\sigma_4$}; 
\end{tikzpicture} 
\qquad 
\]
\[\begin{tikzpicture} \draw (-1.5,0)--(1.5,0) (0,0)--(0,1.5) (-1.5,-1.5)--(1.5,1.5);
\node at (0.2,-0.4) {$\sigma_1$}; \node at (1,0.5) {$\sigma_2$}; \node at (0.6,1) {$\sigma_3$}; \node at (-0.4,0.4) {$\sigma_4$}; 
\node at (-1,-0.5) {$\sigma_5$}; 
\end{tikzpicture} 
\qquad
\begin{tikzpicture} \draw (-1.5,0)--(1.5,0) (0,-1.5)--(0,1.5) (-1.5,-1.5)--(1.5,1.5);
\node at (0.4,-0.4) {$\sigma_1$}; \node at (1,0.5) {$\sigma_2$}; \node at (0.6,1) {$\sigma_3$}; \node at (-0.4,0.4) {$\sigma_4$}; 
\node at (-1,-0.5) {$\sigma_5$}; \node at (-0.6,-1) {$\sigma_6$}; 
\end{tikzpicture} 
\qquad 
\]
It is well known that the corresponding toric varieties are the projective plane $\mathbb{P}^2$, $\mathbb{P}^1\times \mathbb{P}^1$ and the projective plane blown up in one, two, or three points in general position, so (i) is proved. 
Taking into account the explicit inequalities in Figure 1, we see that for the pairs $(A,\theta)$ given in (ii), the variety $X_{\polytope(A,\theta)}=\moduli(A,\theta)$ has the desired isomorphism type. 
 \end{proof}

\begin{remark}\label{remark:nosmallerquiver} {\rm 
(i) Since the toric fan of the blow up of $\mathbb{P}^2$ in three generic points has $6$ rays, to realize it as a toric quiver variety we need a quiver with at least $6$ arrows and hence with at least $5$ vertices (see Proposition~\ref{prop:optimal}). 

(ii) Comparing Proposition~\ref{prop:twodim} with Section 3.3 in \cite{altmann-nill-schwentner-wiercinska} we conclude that for each isomorphism class of a $2$-dimensional toric quiver variety there is a quiver $\quiver$ such that  $\moduli(\quiver,\delta_{\quiver})$ belongs to the given isomorphism class (recall that $\delta_{\quiver}$ is the so-called canonical weight), in particular  in dimension $2$ every projective toric quiver variety is Gorenstein Fano.. This is explained by the following two facts: (1) in dimension $2$, a complete fan is determined by the set of rays; (2) if $(\quiver,\theta)$ is tight, then $(\quiver,\delta_{\quiver})$ is tight. Now (1) and (2) imply that if $(\quiver,\theta)$ is tight and $\chi(\quiver)=2$, then $\moduli(\quiver,\theta)\cong\moduli (\quiver,\delta_{\quiver})$. 

(iii) The above does not hold in dimension three or higher. Consider for example the quiver-weight pairs:
\[
\begin{tikzpicture}[>=open triangle 45,scale=0.8] 
\foreach \x in {(0,0),(2,0),(0,2)} \filldraw \x circle (2pt); 
\draw  [->]  (2,0)--(0,0);
\draw  [->]  (2,0) to [out=225,in=315] (0,0);
\draw  [->]  (0,0)--(0,2);
\draw  [->]  (0,0) to [out=135,in=225] (0,2);
\draw  [->]  (2,0)--(0,2);
\node [left] at (0,0) {$0$};  \node [above] at (0,2) {$3$}; \node [right] at (2,0) {$-3$}; 

\end{tikzpicture}
\hspace*{3em}{\begin{tikzpicture}[>=open triangle 45,scale=0.8] 
\foreach \x in {(2,0),(4,0),(2,2)} \filldraw \x circle (2pt); 
\draw  [->]  (4,0)--(2,0);
\draw  [->]  (4,0) to [out=225,in=315] (2,0);
\draw  [->]  (2,0)--(2,2);
\draw  [->]  (2,0) to [out=135,in=225] (2,2);
\draw  [->]  (4,0)--(2,2);
\node [left] at (2,0) {$1$};  \node [above] at (2,2) {$1$}; \node [right] at (4,0) {$-2$}; 
\end{tikzpicture}}
\]

The weight on the left is the canonical weight $\delta_{\quiver}$ for this quiver, and it is easy to check that $(\quiver,\delta_{\quiver})$ is tight and $\moduli(\quiver,\delta_{\quiver})$ is a Gorenstein Fano variety with one singular point. The weight on the right is also tight for this quiver, however it gives a smooth moduli space which can not be isomorphic to $\moduli(\quiver,\delta_{\quiver})$, consequently it also can not be Gorenstein Fano since the rays in its fan are the same as those in the fan of $\moduli(\quiver,\delta_{\quiver})$.

(iv) It is also notable in dimension $2$ that each toric moduli space can be realized by precisely one quiver from $\reducedquivers_d$. This does not hold in higher dimensions. For example consider the following quivers:
\[
\begin{tikzpicture}[>=open triangle 45,scale=0.8] 
\foreach \x in {(0,0),(0,2),(0,-2),(2,0)} \filldraw \x circle (2pt); 
\draw  [->]  (0,2)--(0,0);
\draw  [->]  (0,-2)--(0,0);
\draw  [->]  (0,2)--(2,0);
\draw  [->]  (0,2) to [out=0,in=90] (2,0);
\draw  [->]  (0,-2)--(2,0);
\draw  [->]  (0,-2) to [out=0,in=270] (2,0);
\end{tikzpicture}
\hspace*{3em}{\begin{tikzpicture}[>=open triangle 45,scale=0.8] 
\foreach \x in {(0,-1),(0,1),(2,-1),(2,1)} \filldraw \x circle (2pt); 
\draw  [->]  (0,-1)--(0,1);
\draw  [->]  (2,-1)--(2,1);
\draw  [->]  (0,-1)--(2,-1);
\draw  [->]  (0,-1) to [out=-45,in=235] (2,-1);
\draw  [->]  (0,1)--(2,1);
\draw  [->]  (0,1) to [out=45,in=135] (2,1); 
\end{tikzpicture}}
\]

These quivers are both tight with their canonical weights, and give isomorpic moduli, since they are both obtained after tightening:
\[
\begin{tikzpicture}[>=open triangle 45,scale=0.8] 
\foreach \x in {(0,0),(0,1),(0,-1),(2,1),(2,-1)} \filldraw \x circle (2pt); 
\draw  [->]  (0,1)--(0,0);
\draw  [->]  (0,-1)--(0,0);
\draw  [->]  (0,1)--(2,1);
\draw  [->]  (0,1) to [out=45,in=135] (2,1);
\draw  [->]  (0,-1)--(2,-1);
\draw  [->]  (0,-1) to [out=315,in=235] (2,-1);
\draw  [->]  (2,-1)--(2,1);
\end{tikzpicture}
\]
}
\end{remark}


\section{Affine quotients} \label{sec:affine} 

We need a result concerning representation spaces that we discuss for general dimension vectors. 
Consider the following situation. Let $\subforest$ be a (not necessarily full) subquiver of $\quiver$ which is the disjoint union of trees $\subforest=\coprod_{i=1}^r\subtree^i$ 
(where by a {\it tree} we mean a quiver whose underlying graph is a tree). 
Let  $\alpha$ be a dimension vector 
taking the same value $d_i$ on the vertices of each $\subtree^i$ $(i=1,\dots,r)$.  Let $\theta\in\mz^{\quiver_0}$ be a weight such that there exist positive integers $n_a$ $(a\in \subforest_1)$ with $\theta(v)=\sum_{a\in\subforest_1:a^+=v}n_a-\sum_{a\in\subforest_1:a^-=v}n_a$. The representation space $\rep(\quiver,\alpha)$ contains the Zariski dense open subset 
\[U_{\subforest}:=\{R\in\rep(\quiver,\alpha)\mid \forall a\in\subforest_1:\ \det(R(a))\neq 0\}.\] 
Note that $U_{\subforest}$ is a principal affine open subset in $\rep(\quiver,\alpha)$ given by the non-vanishing of the relative invariant 
$f:R\mapsto \prod_{a\in\subforest_1}\det^{n_a}(R(a))$  of weight $\theta$, hence $U_{\subforest}$ is contained in $\rep(\quiver,\alpha)^{\theta-ss}$. 
Moreover, $U_{\subforest}$ is $\pi$-saturated with respect to the quotient morphism $\pi:\rep(\quiver,\alpha)^{\theta-ss}\to \moduli(\quiver,\alpha,\theta)$, 
hence $\pi$ maps $U_{\subforest}$ onto an open subset $\pi(U_{\subforest})\cong U_{\subforest}/\!/\! GL(\alpha)$ of $\moduli(\quiver,\alpha,\theta)$ (here for an affine $GL(\alpha)$-variety $X$ we denote by $X/\!/\! GL(\alpha)$ the 
affine quotient, that is, the variety with coordinate ring the ring of invariants $\coord(X)^{GL(\alpha)}$), see \cite{newstead}.   
Denote by $\hat\quiver$ the quiver obtained from $\quiver$ by contracting each connected component $\subtree^i$ of $\subforest$ to a single vertex $t_i$ $(i=1,\dots,r)$. 
So $\hat\quiver_0=\quiver_0\setminus \subforest_0\coprod \{t_1,\dots,t_r\}$ and its arrow set can be identified with $\quiver_1\setminus\subforest_1$, but if an end vertex of an arrow belongs to $\subtree^i$ in $\quiver$ then viewed as an arrow in $\hat\quiver$ the correspoding end vertex is $t_i$ (in particular, an arrow in $\quiver_1\setminus\subforest_1$ connecting two vertices of $\subtree^i$ becomes a loop at vertex $t_i$).  Denote by $\hat\alpha$ the dimension vector obtained by contracting $\alpha$ accordingly, so $\hat\alpha(t_i)=d_i$ for $i=1,\dots,r$ and $\hat\alpha(v)=\alpha(v)$ for $v\in\hat\quiver_0\setminus\{t_1,\dots,t_r\}$. 
Sometimes we shall identify $GL(\hat\alpha)$ with the subgroup of $GL(\alpha)$ consisting of the elements $g\in GL(\alpha)$ with the property that 
$g(v)=g(w)$ whenever $v,w$ belong to the same component $\subtree^i$ of $\subforest$. 
We have a $GL(\hat\alpha)$-equivariant embedding 
\begin{equation}\label{eq:iota}\iota:\rep(\hat\quiver,\hat\alpha)\to \rep(\quiver,\alpha)\end{equation}
defined by $\iota(x)(a)=x(a)$ for $a\in\hat\quiver_1$ and $\iota(x)(a)$ the identity matrix for $a\in\quiver_1\setminus \hat\quiver_1$. Clearly $\mathrm{Im}(\iota)\subseteq \rep(\quiver,\alpha)^{\theta-ss}$. 

\begin{proposition}\label{prop:slices} 
\begin{itemize}
\item[(i)] $U_{\subforest}\cong GL(\alpha)\times_{GL(\hat\alpha)}\rep(\hat\quiver,\hat\alpha)$ 
as affine $GL(\alpha)$-varieties. 
\item[(ii)] The map $\iota$ induces an isomorphism $\bar\iota: \moduli(\hat\quiver,\hat\alpha,0)\stackrel{\cong} \longrightarrow \pi(U_{\subforest})\subseteq \moduli(\quiver,\alpha,\theta)$. 
 \end{itemize} 
\end{proposition}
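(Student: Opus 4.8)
The plan is to prove (i) by realizing the map $\Phi\colon GL(\alpha)\times\rep(\hat\quiver,\hat\alpha)\to U_{\subforest}$, $(g,x)\mapsto g\cdot\iota(x)$, as the quotient for the twisted $GL(\hat\alpha)$-action $h\cdot(g,x)=(gh^{-1},h\cdot x)$, and then to deduce (ii) from (i) by the standard identification of invariants on an associated fibre bundle. First I would check that $\Phi$ takes values in $U_{\subforest}$: since $\iota(x)$ is the identity on every arrow of $\subforest$, its determinant there is $1$, and acting by the invertible $g$ preserves nonvanishing of these determinants. Equivariance of $\iota$ gives $\iota(h\cdot x)=h\cdot\iota(x)$ for $h\in GL(\hat\alpha)$, so $\Phi(gh^{-1},h\cdot x)=gh^{-1}\cdot(h\cdot\iota(x))=g\cdot\iota(x)=\Phi(g,x)$; thus $\Phi$ is constant on the twisted orbits and descends to a $GL(\alpha)$-equivariant morphism $\bar\Phi$ on $GL(\alpha)\times_{GL(\hat\alpha)}\rep(\hat\quiver,\hat\alpha)$.

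The heart of (i) is the construction of an inverse, and here the forest hypothesis on $\subforest$ together with the constancy of $\alpha$ on each tree is exactly what makes gauge-fixing possible. Given $R\in U_{\subforest}$, I would build $g\in GL(\alpha)$ trivializing $R$ along $\subforest$: put $g(v)$ equal to the identity for $v\notin\subforest_0$, choose a root in each tree $\subtree^i$ with value the identity, and propagate by the rule $g(a^+)=R(a)g(a^-)$ along each arrow $a\in\subforest_1$. Because $\subtree^i$ is a tree there are no cycles to obstruct consistency, and because $\alpha$ is constant $=d_i$ on $\subtree^i$ all the matrices $R(a)$ and $g(v)$ lie in $GL_{d_i}(\mc)$, so each step is invertible and regular on $U_{\subforest}$ (a polynomial in the entries of the $R(a)$ and in $\det(R(a))^{-1}$). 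By construction $g^{-1}\cdot R$ is the identity on every arrow of $\subforest$, hence equals $\iota(x)$ for a unique $x\in\rep(\hat\quiver,\hat\alpha)$, and I send $R\mapsto[(g,x)]$. Changing the chosen roots alters $g$ by right translation by an element of $GL(\hat\alpha)$ and so does not change the class in the bundle, giving a well-defined regular inverse to $\bar\Phi$. For the underlying bijection of points, surjectivity is the existence of $g$ just built, while if $g\cdot\iota(x)=g'\cdot\iota(x')$ then $h:=g'^{-1}g$ satisfies $h\cdot\iota(x)=\iota(x')$; evaluating on a tree arrow forces $h(a^+)=h(a^-)$, so by connectedness $h$ is constant on each $\subtree^i$, i.e. $h\in GL(\hat\alpha)$, and then $x'=h\cdot x$, exhibiting the two preimages in a single twisted orbit.

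For (ii) I would combine (i) with the identification $\pi(U_{\subforest})\cong U_{\subforest}/\!/GL(\alpha)$ recorded in the text and with the elementary fact that $(GL(\alpha)\times_{GL(\hat\alpha)}Y)/\!/GL(\alpha)\cong Y/\!/GL(\hat\alpha)$. On coordinate rings this is a short computation: the left $GL(\alpha)$-invariants of $\coord(GL(\alpha))\otimes\coord(Y)$ are just $\coord(Y)$, since a function invariant under all left translations of the $GL(\alpha)$-factor is determined by its value at the identity; the remaining commuting $GL(\hat\alpha)$-symmetry then acts in the standard way on $\coord(Y)$ and cuts this down to $\coord(Y)^{GL(\hat\alpha)}$. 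Taking $Y=\rep(\hat\quiver,\hat\alpha)$ yields $U_{\subforest}/\!/GL(\alpha)\cong\rep(\hat\quiver,\hat\alpha)/\!/GL(\hat\alpha)=\moduli(\hat\quiver,\hat\alpha,0)$. Finally I would observe that this isomorphism is the one induced by $\iota$: under the bundle identification $\iota(x)$ corresponds to the class $[(1,x)]$, so $\bar\iota$ is precisely the composite $\moduli(\hat\quiver,\hat\alpha,0)\to U_{\subforest}/\!/GL(\alpha)\cong\pi(U_{\subforest})$ produced above.

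I expect the main obstacle to be the regularity and choice-independence of the trivializing gauge transformation $g$ in (i) — that is, upgrading the evident set-theoretic bijection to an honest isomorphism of affine $GL(\alpha)$-varieties — whereas once (i) is in place, (ii) reduces to the routine associated-bundle invariant computation.
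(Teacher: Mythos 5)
Your proof is correct, and it reaches the statement by a more self-contained route than the paper. The paper's proof of (i) sets $p:=\iota(0)$, observes that $GL(\hat\alpha)$ is exactly the stabilizer of $p$ in $GL(\alpha)$ and that the orbit $O$ of $p$ is the whole set of representations which are invertible on the arrows of $\subforest$ and zero elsewhere (proved by induction on the number of arrows of $\subforest$, which is where the forest hypothesis enters); it then identifies $U_{\subforest}=\rep(\hat\quiver,\hat\alpha)\times O$, notes that the projection onto $O$ is $GL(\alpha)$-equivariant with fibre $\iota(\rep(\hat\quiver,\hat\alpha))$ over $p$, and concludes by citing a standard slice lemma (Lemma 5.17 in \cite{bongartz}) that an equivariant fibration over an orbit is an associated bundle. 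You instead construct the bundle map $\bar\Phi$ and its inverse explicitly; your gauge-fixing propagation $g(a^+)=R(a)g(a^-)$ along each tree (consistent because there are no cycles, and regular because the inverses $R(a)^{-1}$ are regular on $U_{\subforest}$) is precisely the content hidden in the paper's induction showing that $O$ exhausts the supported invertible representations, so the tree hypothesis and the constancy of $\alpha$ on each $\subtree^i$ play identical roles in both arguments. What your version buys is independence from the cited lemma together with an explicit, visibly regular inverse; what the paper's version buys is brevity and a formulation that generalizes without extra work. For (ii) the two arguments essentially coincide: the paper invokes ``standard properties of associated fiber products'' together with the saturation fact $\pi(U_{\subforest})\cong U_{\subforest}/\!/ GL(\alpha)$ recorded before the proposition, and identifies the image as $\pi(\iota(\rep(\hat\quiver,\hat\alpha)))$, while you carry out the same invariant-ring computation $\coord\bigl(GL(\alpha)\times_{GL(\hat\alpha)}Y\bigr)^{GL(\alpha)}\cong\coord(Y)^{GL(\hat\alpha)}$ by hand and then check that the resulting isomorphism is the one induced by $\iota$ via $x\mapsto[(1,x)]$, which is the step needed to make the words ``induced by $\iota$'' in the statement literally true.
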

 
\begin{proof} (i) Set $p:=\iota(0)\in\rep(\quiver,\alpha)$. Clearly $GL(\hat\alpha)$ is the stabilizer of $p$ in $GL(\alpha)$ acting on $\rep(\quiver,\alpha)$, hence the $GL(\alpha)$-orbit $O$ of $p$ is isomorphic to 
$GL(\alpha)/GL(\hat\alpha)$ via the map sending the coset $gGL(\hat\alpha)$ to $g\cdot p$. 
On the other hand $O$ is the subset  consisting of all those points $R\in\rep(\quiver,\alpha)$ for which 
$\det(R(a))\neq 0$ for $a\in\subforest_1$ and $R(a)=0$ for all $a\notin \subforest_1$.  This can be shown by induction on the number of arrows of $\subforest$, using  the assumption that $\subforest$ is the disjoint union of trees.  Recall also that the arrow set of $\hat\quiver$ is identified with a subset   $\quiver_1\setminus\subforest_1$. This yields an obvious identification $U_{\subforest}=\rep(\hat\quiver,\hat\alpha)\times O$. Projection $\varphi:U_{\subforest}\to O$ onto the second component is $GL(\alpha)$-equivariant by construction. 
Moreover, the fibre $\varphi^{-1}(p)=\iota(\rep(\hat\quiver,\hat\alpha))\cong \rep(\hat\quiver,\hat\alpha)$ as a $\mathrm{Stab}_{GL(\alpha)}(p)=GL(\hat\alpha)$-varieties. 
It is well known that this implies the isomorphism $U_{\subforest} \cong GL(\alpha)\times_{GL(\hat\alpha)}\rep(\hat\quiver,\hat\alpha)$, see for example Lemma 5.17 in \cite{bongartz}. 

(ii) It follows from (i) that $U_{\subforest} /\!/\! GL(\alpha)\cong \rep(\hat\quiver,\hat\alpha)/\!/\! GL(\hat\alpha)=\moduli(\hat\quiver,\hat\alpha,0)$ by standard properties of associated fiber products. Furthermore, 
taking into account the proof of (i) we see 
$U_{\subforest}/\!/\! GL(\alpha)=\pi(\varphi^{-1}(p))=\pi(\iota(\rep(\hat\quiver,\hat\alpha))$ where $\pi$ is the quotient morphism \eqref{eq:quotient}.  
 \end{proof} 
 
 Let us apply Proposition~\ref{prop:slices} in the toric case. 
It is well known that for a  lattice point $m$ in a lattice polyhedron $\polytope$  there is an affine open toric subvariety $U_m$ of $X_{\polytope}$, and $X_{\polytope}$ is covered by these affine open subsets as $m$ ranges over the set of vertices of $\polytope$  (see Section 2.3 in \cite{cox-little-schenck}). For a toric quiver variety realized as $\mathrm{Im}(\rho)$ as in \eqref{eq:rho} and Proposition~\ref{prop:realizationofmoduli}, this can be seen explicitly as follows: $U_{m_i}$ is the complement in $\mathrm{Im}(\rho)$ of the affine hyperplane $\{(x_0:\dots:x_d)\mid x_i=0\}\subseteq \mathbb{P}^d$, 
and for a general lattice point $m$ in the quiver polyhedron, $U_m$ is the intersection of finitely many $U_{m_i}$.

A subset $S$ of $\quiver_0$ is {\it successor closed} if for any $a\in\quiver_1$ with $a^-\in S$ we have $a^+\in S$. A subquiver $\quiver'$ of $\quiver$ is {\it $\theta$-stable} if 
$\theta(\quiver'_0)=0$ and for any non-empty $S\subsetneq \quiver'_0$ which is successor closed in $\quiver'$ we have that $\theta(S)>0$. 
The {\it support} of $x\in\mz^{\quiver_1}$ is the quiver with vertex set $\support(x)_0:=\quiver_0$ and arrow set $\support(x)_1:=\{a\in\quiver_1\mid x(a)\neq 0\}$. Now $m\in \polytope(\quiver,\theta)$ is a vertex if and only if the connected components of $\support(m)$ are $\theta$-stable subtrees of $\quiver$. On the other hand for each subquiver $T$ of $\quiver$ that is the disjoint union of $\theta$-stable subtrees and satisfies $T_0=\quiver_0$ there is  precisely one vertex $m\in \polytope(\quiver,\theta)$ such that $\support(m)=T$ (see for example Corollary 8 in \cite{altmann-straten}). 
Given a vertex $m$ of the polyhedron $\polytope(\quiver,\theta)$ denote by $(\quiver^m,\theta^m)$ the quiver and weight obtained by  successively contracting the arrows in 
$\support(m)$. Clearly $\theta^m$ is the zero weight. 
The following statement can be viewed as a stronger version for the toric case of the results \cite{adriaenssens-lebruyn} on the {\it local quiver settings} of a moduli space of quiver representations: the \'etale morphisms used for general dimension vectors in  \cite{adriaenssens-lebruyn} can be replaced by isomorphisms in the toric case.

\begin{theorem}\label{thm:vertices} 
For any vertex $m$ of the quiver polyhedron $\polytope(\quiver,\theta)$ the affine open toric subvariety $U_m$ in $\moduli(\quiver,\theta)$ is isomorphic to 
$\moduli(\quiver^m,0)$. 
Moreover, $\iota:\rep(\quiver^m)\to \rep(\quiver)$ defined as in \eqref{eq:iota} induces an isomorphism 
$\bar\iota:\moduli(\quiver^m,0)\stackrel{\cong}\longrightarrow U_m\subseteq \moduli(\quiver,\theta)$. 
\end{theorem}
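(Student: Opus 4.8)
The plan is to realize the chart $U_m$ as the image $\pi(U_{\support(m)})$ of the principal open set studied in Proposition~\ref{prop:slices}, applied to the spanning forest $T:=\support(m)$ with the thin dimension vector $\alpha=(1,\dots,1)$; the isomorphism furnished by that proposition will then be exactly $\bar\iota$.

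First I would check the hypotheses of Proposition~\ref{prop:slices} for $T=\support(m)$. Because $m$ is a \emph{vertex} of $\polytope(\quiver,\theta)$, the connected components of $T$ are $\theta$-stable subtrees and $T_0=\quiver_0$, so $T$ is a disjoint union of trees on which $\alpha$ is constant. The one remaining condition—existence of positive integers $n_a$ $(a\in T_1)$ realizing $\theta$ through the flow map on $T$—is supplied by $m$ itself: as a lattice point of $\polytope(\quiver,\theta)$ we have $m\ge\mathbf 0$ and $\theta(v)=\sum_{a^+=v}m(a)-\sum_{a^-=v}m(a)$, while $m(a)\ne 0$ precisely for $a\in T_1$. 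Hence $n_a:=m(a)>0$ for $a\in T_1$ works, since the terms with $a\notin T_1$ drop out of the flow equation.

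With the hypotheses in force, Proposition~\ref{prop:slices}(ii) gives an isomorphism $\bar\iota:\moduli(\hat\quiver,\hat\alpha,0)\stackrel{\cong}\longrightarrow\pi(U_T)\subseteq\moduli(\quiver,\theta)$. As $\alpha$ is thin, $\hat\alpha=(1,\dots,1)$, and the contraction $\hat\quiver$ of the components of $\support(m)$ is by definition the quiver $\quiver^m$; thus $\moduli(\hat\quiver,\hat\alpha,0)=\moduli(\quiver^m,0)$ and $\bar\iota$ is the map appearing in the statement.

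It remains to identify $\pi(U_T)$ with the affine chart $U_m$, and here I would use the explicit embedding $\rho$ of \eqref{eq:rho}. Since $m$ lies in $\bigcup_i\polytope(\quiver^i,\theta)$ it occurs among the lattice points $m_0,\dots,m_d$, so $U_m$ is the locus in $\mathrm{Im}(\rho)=\moduli(\quiver,\theta)$ where the coordinate $x^m$ does not vanish. For $x\in\rep(\quiver)$ one has $x^m=\prod_{a\in T_1}R(a)^{m(a)}$ (the factors with $a\notin T_1$ being trivial), which is nonzero exactly when $R(a)\ne 0$ for all $a\in T_1$, i.e.\ exactly when $x\in U_T$. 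Thus $\rho^{-1}(U_m)=U_T$ and therefore $\pi(U_T)=U_m$, and combining the two displayed isomorphisms proves the theorem. I expect the only delicate points to be this last identification—namely confirming that $U_m$ is literally the nonvanishing locus of $x^m$ for a vertex $m$—together with the verification that a vertex furnishes strictly positive flow values on exactly its support; the geometric content is entirely carried by the slice decomposition of Proposition~\ref{prop:slices}, so no further quotient-theoretic argument should be needed.
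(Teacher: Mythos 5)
Your proposal is correct and follows exactly the paper's route: the paper proves Theorem~\ref{thm:vertices} by declaring it a special case of Proposition~\ref{prop:slices}(ii), and your argument simply makes explicit the verifications this entails (that $T=\support(m)$ with $n_a:=m(a)$ satisfies the hypotheses for the thin dimension vector, and that $\pi(U_T)$ is the chart $U_m$, i.e.\ the nonvanishing locus of $x^m$ in the realization of Proposition~\ref{prop:realizationofmoduli}). These details are exactly what the paper leaves implicit, so there is no substantive difference in approach.
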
 

\begin{proof} This is a special case of Proposition~\ref{prop:slices} (ii). 
\end{proof} 

Conversely, any affine toric quiver variety $\moduli(\quiver',0)$ can be obtained as $U_m\subseteq \moduli(\quiver,\theta)$ for some projective toric quiver variety 
$\moduli(\quiver,\theta)$ and a vertex $m$ of the quiver polytope $\polytope(\quiver,\theta)$. In fact we have a more general result, which is a refinement for the toric case of 
Theorem 2.2 in \cite{domokos:gmj}: 

\begin{theorem}\label{thm:compactification} 
For any quiver polyhedron $\polytope(\quiver,\theta)$ with $k$ vertices there exists a bipartite quiver $\tilde\quiver$, a weight $\theta' \in\mz^{\tilde\quiver_1}$, and a set 
$m_1,\dots,m_k$ of vertices of the quiver polytope $\polytope(\tilde\quiver,\theta')$ such  that the quasiprojective toric variety 
$\moduli(\quiver,\theta)$ is isomorphic to the open subvariety $\bigcup_{i=1}^kU_{m_i}$ of the projective toric quiver variety $\moduli(\tilde\quiver,\theta')$. 
\end{theorem}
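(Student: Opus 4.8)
The plan is to realize $\moduli(\quiver,\theta)$ as an open toric subvariety of a projective toric quiver variety in two steps: first replace the (possibly unbounded) quiver polyhedron by a bounded flow polytope that agrees with it near all of its vertices, and then present that flow polytope by a bipartite quiver. The guiding principle is that, by Theorem~\ref{thm:vertices} together with the description of the affine charts $U_m$ preceding it, the chart $U_m$ attached to a vertex $m$ depends only on the tangent (feasible) cone of the polyhedron at $m$, equivalently on the contracted pair $(\quiver^m,0)$. Hence if I can find a bipartite quiver polytope having $k$ vertices with exactly the same tangent cones as the $k$ vertices $m_1,\dots,m_k$ of $\polytope(\quiver,\theta)$, then the union of the corresponding charts will reassemble $\moduli(\quiver,\theta)$.

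For the first step I would fix $\mathbf u\in\mn^{\quiver_1}$ with $\mathbf u(a)$ strictly larger than $m_i(a)$ for every arrow $a$ and every vertex $m_i$, and consider the flow polytope $P:=\polytope(\quiver,\theta,\mathbf 0,\mathbf u)\subseteq\polytope(\quiver,\theta)$. Since none of the new facets $\{x(a)=\mathbf u(a)\}$ passes through any $m_i$, each $m_i$ remains a vertex of $P$ and the edges of $P$ emanating from $m_i$ point in the same directions as before (the formerly unbounded edges are merely truncated far away). Thus the tangent cone of $P$ at $m_i$ equals that of $\polytope(\quiver,\theta)$, so the normal fan of $\polytope(\quiver,\theta)$ is a subfan of the complete normal fan of $P$. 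Because the toric variety of a subfan is an open toric subvariety, this identifies $\moduli(\quiver,\theta)\cong X_{\polytope(\quiver,\theta)}$ with the open union $\bigcup_{i=1}^k U_{m_i}$ inside the projective toric variety $X_P$.

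For the second step I would present $P$ as a bipartite quiver polytope by a complementation trick, a bipartite variant of the construction in Proposition~\ref{prop:flowpolytope-quiverpolytope}. Introduce a source $s_a$ for each arrow $a\in\quiver_1$ and a sink $t_v$ for each vertex $v\in\quiver_0$, and for each $a$ attach two arrows $s_a\to t_{a^+}$ and $s_a\to t_{a^-}$; set $\theta'(s_a)=-\mathbf u(a)$ and $\theta'(t_v)=\theta(v)+\sum_{a^-=v}\mathbf u(a)$. The resulting quiver $\tilde\quiver$ is bipartite, hence has no oriented cycles, so $\moduli(\tilde\quiver,\theta')$ is projective. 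The affine map $\varphi$ sending $x$ to the flow with value $x(a)$ on $s_a\to t_{a^+}$ and value $\mathbf u(a)-x(a)$ on $s_a\to t_{a^-}$ restricts to an isomorphism of affine spans satisfying (i) and (ii) of Definition~\ref{def:isomorphicpolytopes}: the source equations reduce to $x(a)+(\mathbf u(a)-x(a))=\mathbf u(a)$, while the sink equations reproduce $\floweqs(x)=\theta$ once one rewrites $-x(a)$ as $(\mathbf u(a)-x(a))-\mathbf u(a)$. Thus $P$ and $\polytope(\tilde\quiver,\theta')$ are integral-affinely equivalent, and under the induced isomorphism $X_P\cong\moduli(\tilde\quiver,\theta')$ the $m_i$ go to vertices $\tilde m_i$ of $\polytope(\tilde\quiver,\theta')$ with $U_{m_i}\cong U_{\tilde m_i}$. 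Combining the two steps yields $\moduli(\quiver,\theta)\cong\bigcup_{i=1}^k U_{\tilde m_i}$, as required.

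The step I expect to require the most care is the claim in the second paragraph that, for $\mathbf u$ large, the tangent cones at the $m_i$ are genuinely unchanged, i.e. that capping the flow introduces no new edge at any $m_i$ and that the normal fan of the unbounded polyhedron embeds in that of $P$ as a subfan; this is precisely what guarantees that the charts $U_{m_i}$, and not merely the underlying point sets, are preserved. The bipartite construction of the third paragraph, by contrast, is a routine verification entirely parallel to Proposition~\ref{prop:flowpolytope-quiverpolytope}.
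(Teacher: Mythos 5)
Your proof is correct, and it takes a genuinely different route from the paper's. The paper argues representation-theoretically: it doubles the quiver (each vertex $v$ becomes a source $v_-$ and a sink $v_+$ joined by a new arrow $e_v$, and each arrow from $v$ to $w$ is rerouted from $v_-$ to $w_+$), takes as $\theta'$ a weight $\tilde\theta+d\kappa$ with $d\gg 0$, shows that $\theta$-stable subtrees of $\quiver$ become stable subtrees of the doubled quiver (so every vertex $m$ of $\polytope(\quiver,\theta)$ has a counterpart $\tilde m$), and then uses the equivariant map $\mu:\rep(\quiver)\to\rep(\tilde\quiver)$ sending $\mu(x)(e_v)=1$, together with Theorem~\ref{thm:vertices}, to produce chart isomorphisms $U_m\stackrel{\cong}{\longrightarrow} U_{\tilde m}$ (both charts being $\moduli(\quiver^m,0)$) that glue to the desired open immersion. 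You instead stay entirely on the polyhedral side: you cap the polyhedron by a large $\mathbf{u}$, note that the new inequalities are inactive at each $m_i$ so the tangent and normal cones there are unchanged, conclude (since every nonempty face of a pointed polyhedron contains a vertex) that the normal fan of $\polytope(\quiver,\theta)$ is exactly the subfan of the normal fan of $P$ generated by the maximal cones at the $m_i$, and invoke the standard fact that a subfan gives an open toric subvariety; you then present the capped flow polytope by a bipartite quiver via a complementation trick (one source per arrow of $\quiver$, one sink per vertex) that differs both from the paper's doubling and from Proposition~\ref{prop:flowpolytope-quiverpolytope} --- your weight bookkeeping at sources and sinks checks out, and the map $x\mapsto(x,\mathbf{u}-x)$ is indeed an integral-affine equivalence. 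As for what each approach buys: the paper's construction produces an explicit morphism $\bar\mu$ of moduli spaces induced by a map of representation spaces, keeping everything inside the stability formalism of Section 6 (which the remark following the theorem, on embedding into smooth projective toric quiver varieties chart by chart, exploits); yours is shorter and more elementary, outsourcing the work to general toric facts (Proposition~\ref{prop:hille}, vertex charts of a polyhedron's normal fan, subfan $\Rightarrow$ open immersion), at the cost of having to track the chart identifications through the chain $\moduli(\quiver,\theta)\cong X_{\polytope(\quiver,\theta)}\subseteq X_P\cong\moduli(\tilde\quiver,\theta')$.
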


\begin{proof} Double the quiver $\quiver$ to get a bipartite quiver $\tilde\quiver$ as follows: to each $v\in\quiver_0$ there corresponds a source $v_-$ and a sink $v_+$ in 
$\tilde\quiver$, for each $a\in\quiver_1$ there is an arrow in $\tilde\quiver$ denoted by the same symbol $a$, such that if $a\in\quiver_1$ goes from $v$ to $w$, then 
$a\in\tilde\quiver_1$ goes from $v_-$ to $w_+$, and for each $v\in\quiver_0$ there is a new arrow $e_v\in\tilde\quiver_1$ from $v_-$ to $v_+$. 
Denote by $\tilde\theta\in\mz^{\tilde\quiver_0}$ the weight $\tilde\theta(v_-)=0$, $\tilde\theta(v_+)=\theta(v)$, and set $\kappa\in\mz^{\tilde\quiver_0}$ with 
$\kappa(v_-)=-1$ and $\kappa(v_+)=1$ for all $v\in\quiver_0$.  

Suppose that $\subforest$ is a $\theta$-stable subtree in $\quiver$. Denote by $\tilde\subforest$ the subquiver of $\tilde\quiver$ consisting of the arrows with the same label as the arrows of $\subforest$, in addition to the arrows $e_v$ for each $v\in \subforest_0$. It is clear that $\tilde\subforest$ is a subtree of $\tilde\quiver$. We claim that 
$\tilde\subforest$ is $(\tilde\theta+d\kappa)$-stable for sufficiently large $d$. Obviously $(\tilde\theta+d\kappa)(\tilde\subforest_0)=0$. Let $\tilde S$ be a proper successor closed subset of $\tilde\subforest_0$ in $\tilde\quiver$. Denote by $S\subset \subforest_0$ consisting of $v\in\subforest_0$ with $v_+\in\tilde S$ (note that $v_-\in S$ implies $v_+\in S$, since $e_v\in \tilde\subforest$). We have the equality 
$(\tilde\theta+d\kappa)(\tilde S)=\theta(S)+\sum_{v^+\in\tilde S,v_-\notin \tilde S}(\theta(v)+d)$. 
If the second summand is the empty sum (i.e. $v_+\in\tilde S$ implies $v_-\in\tilde S$), then $S$ is successor closed, hence $\theta(S)>0$ by assumption. Otherwise the sum is positive for sufficiently large $d$. This proves the claim. It follows that if $d$ is sufficiently large, then for any vertex $m$ of $\polytope(\quiver,\theta)$, setting 
$\subforest:=\support(m)$, there exists a vertex $\tilde m$ of $\polytope(\tilde\quiver,\tilde\theta+d\kappa)$ with $\support(\tilde m)=\tilde\subforest$. 

Denote by $\mu:\rep(\quiver)\to\rep(\tilde\quiver)$ the map defined by $\mu(x)(e_v)=1$ for each $v\in\quiver_0$, and $\mu(x)(a)=x(a)$ for all $a\in\tilde\quiver_1$. 
This is equivariant, where we identify $(\mc^\times)^{\quiver_0}$ with the stabilizer of $\mu(0)$  in $(\mc^{\times})^{\tilde\quiver_0}$. 
The above considerations show that $\mu(\rep(\quiver)^{\theta-ss})\subseteq \rep(\quiver)^{(\tilde\theta+d\kappa)-ss}$, whence $\mu$ induces a morphism 
$\bar\mu:\moduli(\quiver,\theta)\to\moduli(\tilde\quiver,\tilde\theta+d\kappa)$. 
Restrict $\bar\mu$ to the affine open subset $U_m\subseteq\moduli(\quiver,\theta)$, and compose $\bar\mu\vert_{U_m}$ with the isomorphism 
$\bar\iota:\moduli(\quiver^m,0)\to U_m\subseteq \moduli(\quiver,\theta)$ from Theorem~\ref{thm:vertices}. 
By construction we see that $\bar\mu\vert_{U^m}\circ\bar\iota$ is the isomorphism $\moduli(\quiver^m,0)\to U_{\tilde m}$ of Theorem~\ref{thm:vertices}. 
It follows that $\bar\mu\vert_{U_m}:U_m\to U_{\tilde m}$ is an isomorphism. As $m$ ranges over the vertices of $\polytope(\quiver,\theta)$, these isomorphisms 
glue together to the isomorphism $\bar\mu:\moduli(\quiver,\theta)\to \bigcup_{\tilde m}U_{\tilde m}\subseteq \moduli(\tilde\quiver,\tilde\theta)$.  
\end{proof} 
 
 We note that similarly to Theorem 2.2 in \cite{domokos:gmj}, it is possible to embed $\moduli(\quiver,\theta)$ as an open subvariety into 
 a projective variety $\moduli(\tilde\quiver,\theta')$, such that for any vertex $m'$ of $\polytope(\tilde\quiver,\theta')$ the affine open subvariety $U_{m'}\subseteq \moduli(\tilde\quiver,\theta')$ is isomorphic to $U_m\subseteq \moduli(\quiver,\theta)$ for some vertex $m$ of $\polytope(\quiver,\theta)$ (but of course typically 
 $\polytope(\tilde\quiver,\theta')$ has more vertices than $\polytope(\quiver,\theta)$). In particular, a smooth variety $\moduli(\quiver,\theta)$ can be embedded into 
 a smooth projective toric quiver variety $\polytope(\tilde\quiver,\theta')$, where $\tilde\quiver$ is bipartite. 
 

\section{Classifying affine toric quiver varieties}\label{sec:affine-class} 
 
In this section we deal with the zero weight. It is well known and easy to see (say by Remark~\ref{remark:combinatorial-tightness}) that $\quiver$ is $0$-stable if and only if 
$\quiver$ is {\it strongly connected}, that is, for any ordered pair $v,w\in\quiver_0$ there is an oriented path in $\quiver$ from $v$ to $w$. 

\begin{proposition}\label{prop:valencyfour} 
Let $\quiver$ be a prime quiver with $\chi(\quiver)\ge 2$, such that $(\quiver,0)$ is tight. 
\begin{itemize}
\item[(i)] For any $v\in \quiver_0$ we have  
$|\{a\in\quiver_1\mid a^-=v\}|\ge 2$ and $|\{a\in\quiver_1\mid a^+=v\}|\ge 2$.  
\item[(ii)] We have $|\quiver_0|\le \chi(\quiver)-1$ (and consequently $|\quiver_1|=|\quiver_0|+\chi(\quiver)-1\le 2(\chi(\quiver)-1)$. 
\end{itemize}
\end{proposition}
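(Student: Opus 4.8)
The plan is to read off the combinatorial content of tightness at the zero weight directly from Lemma~\ref{lemma:contractable}: an arrow $a$ is removable precisely when $x(a)=0$ identically on $\polytope(\quiver,0)$, and contractable precisely when the single inequality $x(a)\ge 0$ is already a consequence of the inequalities $x(b)\ge 0$ $(b\neq a)$ on the affine space $\floweqs^{-1}(0)$. Since $\quiver$ is prime with $\chi(\quiver)\ge 2$ it is connected, has at least two vertices, and contains no loops (as recorded in the paragraph after Theorem~\ref{thm:directprod}); in particular no arrow is a loop, so every arrow is a legitimate candidate for contraction, and $(\quiver,0)$ being tight means no arrow is either removable or contractable.

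For part (i) I would argue by contradiction, and reduce the in-degree statement to the out-degree statement by reversing all arrows (this preserves strong connectedness, hence tightness via the opening observation of Section~\ref{sec:affine-class} together with Remark~\ref{remark:combinatorial-tightness}, and it interchanges in- and out-degrees). Since $(\quiver,0)$ is tight and $\quiver$ is connected, $\quiver$ is strongly connected, so every vertex has positive in- and out-degree; thus it suffices to exclude out-degree exactly $1$. Suppose $v$ has a unique out-arrow $a$. Evaluating the flow equation \eqref{eq:flow} at $v$ with $\theta(v)=0$ gives, for every $x\in\floweqs^{-1}(0)$, the identity $\sum_{b^+=v}x(b)=x(a)$, where $a$ does not appear on the left because there are no loops. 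Hence $x(a)$ is a nonnegative combination of the coordinates $x(b)$ with $b\neq a$, so $x(a)\ge 0$ follows from $x(b)\ge 0$ $(b\neq a)$ on $\floweqs^{-1}(0)$, and $a$ is contractable by Lemma~\ref{lemma:contractable}(ii) --- contradicting tightness. Therefore every vertex has out-degree at least $2$, and by the reversal symmetry in-degree at least $2$ as well.

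For part (ii) the remaining step is a counting argument. Summing out-degrees over all vertices counts each arrow exactly once, so part (i) yields $|\quiver_1|=\sum_{v\in\quiver_0}|\{a\in\quiver_1\mid a^-=v\}|\ge 2|\quiver_0|$. As $\quiver$ is connected, $\chi(\quiver)=|\quiver_1|-|\quiver_0|+1$, i.e. $|\quiver_1|=|\quiver_0|+\chi(\quiver)-1$. Combining these gives $|\quiver_0|+\chi(\quiver)-1\ge 2|\quiver_0|$, that is $|\quiver_0|\le \chi(\quiver)-1$; feeding this back into the Euler-characteristic identity gives $|\quiver_1|=|\quiver_0|+\chi(\quiver)-1\le 2(\chi(\quiver)-1)$.

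The entire substance lies in part (i), and specifically in the single observation that at a vertex of out-degree $1$ the zero-weight flow balance expresses the lone out-arrow's coordinate as the sum of the in-arrow coordinates --- which is exactly the contractibility criterion of Lemma~\ref{lemma:contractable}(ii). I expect no genuine obstacle here, only the bookkeeping needed to invoke the absence of loops (so that the in-arrow and out-arrow sets at $v$ are disjoint) and to confirm that strong connectedness disposes of the degenerate degree-$0$ possibility; once (i) is established, part (ii) is an immediate consequence of the relation $\chi(\quiver)=|\quiver_1|-|\quiver_0|+1$.
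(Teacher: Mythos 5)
Your proof is correct and follows essentially the same route as the paper: at a vertex of out-degree $1$ the zero-weight flow equation expresses the lone out-arrow's coordinate as the sum of the in-arrow coordinates, so Lemma~\ref{lemma:contractable}(ii) makes it contractable (the paper handles the in-degree case by the symmetric identity rather than by reversing arrows, which is an equivalent packaging), and then (ii) follows by the same degree count combined with $|\quiver_1|=|\quiver_0|+\chi(\quiver)-1$. Your explicit use of strong connectedness to rule out vertices of in- or out-degree $0$ is a small point the paper leaves implicit, but the substance is identical.
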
 

\begin{proof} (i) Suppose $v\in\quiver_0$ and $a\in\quiver_1$ is the only arrow with $a^-=v$. Then the equations \eqref{eq:flow} imply that for any $x\in\polytope(\quiver,0)$ we have $x(a)=\sum_{b^+=v}x(b)$, so by Lemma~\ref{lemma:contractable} the arrow $a$ is contractable. The case when $a$ is the only arrow with $a^+=v$ is similar. 

(ii) By (i) the valency of any vertex is at least $4$, hence similar considerations as in the proof  of  Proposition~\ref{prop:boundonskeletons} yield the desired bound 
on $|\quiver_0|$. 
\end{proof} 

Denote by $\reducedquivers''_d$ the set of prime quivers $\quiver$ with $\chi(\quiver)=d$ and $(\quiver,0)$ tight. 
Then $\reducedquivers''_1$ consists of the one-vertex quiver with a single loop, $\reducedquivers''_2$ is empty, 
$\reducedquivers''_3$ consists of the quiver with $2$ vertices and $2-2$ arrows in both directons. 
$\reducedquivers''_4$ consists of three quivers:

\begin{tikzpicture}[>=open triangle 45]  
\foreach \x in {(0,0),(2,0)} \filldraw \x circle (2pt); 
\draw [->] (0,0) to [out=90,in=90] (2,0); \draw [->] (0,0) to [out=45,in=135] (2,0); \draw [->] (0,0) to (2,0); 
\draw [<-] (0,0) to [out=270,in=270] (2,0);  \draw [<-] (0,0) to [out=315,in=225] (2,0); 
\end{tikzpicture}
\qquad\qquad
\begin{tikzpicture}[>=open triangle 45]  
\foreach \x in {(0,0),(1,1.5),(2,0)} \filldraw \x circle (2pt); 
\draw[->] (0,0) to (1,1.5); \draw[->] (1,1.5) to (2,0); \draw[->] (2,0) to (0,0); 
\draw[->] (0,0) to [out=90,in=180] (1,1.5); \draw[->] (1,1.5) to [out=0,in=90] (2,0); \draw [<-] (0,0) to [out=315,in=225] (2,0);
\end{tikzpicture} 
\qquad\qquad
\begin{tikzpicture}[>=open triangle 45]  
\foreach \x in {(0,0),(1,1.5),(2,0)} \filldraw \x circle (2pt); 
\draw[<-] (0,0) to (1,1.5); \draw[<-] (1,1.5) to (2,0); \draw[<-] (2,0) to (0,0); 
\draw[->] (0,0) to [out=90,in=180] (1,1.5); \draw[->] (1,1.5) to [out=0,in=90] (2,0); \draw [<-] (0,0) to [out=315,in=225] (2,0);
\end{tikzpicture} 

\begin{example}\label{example:affinedegree} {\rm 
Consider the quiver $\quiver$ with $d$ vertices and $2d$ arrows $a_1,\dots,a_d,b_1,\dots,b_d$, where $a_1\dots a_d$ is a primitive cycle and $b_i$ is the obtained by reversing $a_i$ for $i=1,\dots,d$.  Then $\chi(\quiver)=d+1$, and after the removal of any of the arrows of $\quiver$ we are left with a strongly connected quiver. 
So $(\quiver,0)$ is tight, showing that the bound in Proposition~\ref{prop:valencyfour} (ii) is sharp. The coordinate ring $\coord(\moduli(\quiver,0))$ is the subalgebra of 
$\coord(\rep(\quiver))$ generated by $\{x(a_i)x(b_i),x(a_1)\cdots x(a_d), x(b_1)\cdots x(b_d)\mid i=1,\dots,d\}$, so it is the factor ring of the $(d+2)$-variable polynomial ring 
$\mc[t_1,\dots,t_{d+2}]$ modulo the ideal generated by $t_1\cdots t_d-t_{d+1}t_{d+2}$.  }
\end{example}


 \section{Presentations of semigroup algebras}\label{sec:semigroupalgebras} 
 
 Let $\quiver$ be  a quiver with no oriented cycles, $\theta\in\mz^{\quiver_0}$ a weight. For $a\in\quiver_1$ denote by $x(a):R\mapsto R(a)$ the corresponding coordinate function on $\rep(\quiver)$, and for a lattice point $m\in\polytope(\quiver,\theta)$ set 
 $x^m:=\prod_{a\in\quiver_1}x(a)^{m(a)}$. The homogeneous coordinate ring $\homcoord(\quiver,\theta)$ of $\moduli(\quiver,\theta)$ is the subalgebra of 
 $\coord(\rep(\quiver))$ generated by $x^m$  where $m$ ranges over $\polytope(\quiver,\theta)\cap\mz^{\quiver_1}$. 
 In Section~\ref{sec:equations} we shall study  {\it the ideal of relations} among the generators $x^m$.  This leads us to the context of presentations of polytopal semigroup algebras  (cf. section 2.2 in \cite{bruns-gubeladze}),  since 
 $\homcoord(\quiver,\theta)$ is naturally identified with the semigroup algebra $\mc[\semigr(\quiver,\theta)]$, where 
 \[\semigr(\quiver,\theta):=\coprod_{k=0}^\infty \polytope(\quiver,k\theta)\cap\mz^{\quiver_1}.\] 
This is a submonoid  of $\mz^{\quiver_1}$. By normality of the polytope $\polytope(\quiver,\theta)$ it is the same as the submonoid of $\mn_0^{\quiver_1}$ generated by $\polytope(\quiver,\theta)\cap\mz^{\quiver_1}$.    
 
First we formulate a statement  (Lemma~\ref{lemma:joo}; a version of it was introduced in \cite{joo}) in a slightly more general situation than what is needed here.  
Let $\semigr$ be any finitely generated commutative monoid (written additively) with non-zero generators $s_1,\dots,s_d$, and denote by $\mz[\semigr]$ the corresponding semigroup algebra over $\mz$: its elements are formal integral linear combinations of the symbols $\{x^s\mid s\in\semigr\}$, with multiplication given by $x^s\cdot x^{s'}=x^{s+s'}$. 
Write $R:=\mz[t_1,\dots,t_d]$ for the $d$-variable polynomial ring  over the integers, and 
$\phi:R \to \mz[\semigr]$ the ring surjection $t_i\mapsto x^{s_i}$. Set $I:=\ker(\phi)$. It is well known and easy to see that 
\begin{equation}\label{eq:binomspan}I=\ker(\phi)=\mathrm{Span}_{\mz}\{t^a-t^b\mid \sum_{i=1}^da_is_i=\sum_{j=1}^db_js_j\in\semigr\}\end{equation} 
where for $a=(a_1,\dots,a_d)\in\mn_0^d$ we write $t^a=t_1^{a_1}\dots t_d^{a_d}$. 

Introduce a binary relation on the set of monomials in $R$: we write $t^a\sim t^b$ if $t^a-t^b\in R_+I$, where $R_+$ is the ideal in $R$ consisting of the  polynomials 
with zero constant term. 
Obviously $\sim$ is an equivalence relation. Let $\Lambda$ be a complete set of representatives of the equivalence classes. 
We have $\Lambda=\coprod_{s\in \semigr}\Lambda_s$, where 
for $s\in\semigr$ set $\Lambda_s:=\{t^a\in\Lambda\mid \sum a_is_i=s\}$. For the $s\in \semigr$ with $|\Lambda_s|>1$, set 
$\mathcal{G}_s:=\{t^{a_1}-t^{a_i}\mid i=2,\dots,p\}$, where $t^{a_1},\dots,t^{a_p}$ is an arbitrarily chosen ordering of the elements of $\Lambda_s$. 

\begin{lemma}\label{lemma:mingen}
Suppose that $\semigr=\coprod_{k=0}^\infty\semigr_k$ is graded (i.e. $\semigr_k\semigr_l\subseteq \semigr_{k+l}$) and   $\semigr_0=\{0\}$ (i.e. the generators $s_1,\dots,s_d$ have positive degree). 
Then $\coprod_{s\in\semigr:|\Lambda_s|>1}\mathcal{G}_s$ is a minimal homogeneous generating system of the ideal $I$, where the grading on $\mz[t_1,\dots,t_d]$ is defined by setting the degree of $t_i$ to be equal to the  degree of $s_i$. 
In particular, $I$ is minimally generated by $\sum_{s\in\semigr}(|\Lambda_s|-1)$ elements. 
\end{lemma}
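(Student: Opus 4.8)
The plan is to compute the $\mz$-module $I/R_+I$ explicitly and to show that the images of $\coprod_{|\Lambda_s|>1}\mathcal{G}_s$ form a $\mz$-basis of it. Minimality and the asserted cardinality then follow formally, while generation of $I$ follows from a graded Nakayama argument. The first step is to refine the grading: since $\semigr$ is graded and $\phi$ preserves degrees once $t^a$ is assigned the $\semigr$-degree $\sum_i a_i s_i\in\semigr$, the ring $R$ carries an $\semigr$-grading $R=\bigoplus_{s\in\semigr}R_s$ with $R_s=\mathrm{Span}_{\mz}\{t^a\mid\sum_i a_is_i=s\}$, refining the $\mz$-grading. By \eqref{eq:binomspan} both $I$ and $R_+I$ are $\semigr$-homogeneous, so one may work fibre by fibre. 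Writing $M_s$ for the finite set of monomials of $\semigr$-degree $s$ and identifying $R_s=\mz^{M_s}$, one reads off directly that $I_s$ is the sum-zero sublattice $\{v\mid\sum_{t^a\in M_s}v_{t^a}=0\}$, namely the integer span of the differences $t^a-t^b$ with $t^a,t^b\in M_s$.

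The key computation is the identification of $(R_+I)_s$. I would introduce on each $M_s$ the graph $\Gamma_s$ whose edges join two monomials sharing a common variable factor. If $t^a,t^b$ share a factor $t^c$ with $c\neq 0$, then $t^a-t^b=t^c(t^{a'}-t^{b'})\in R_+I$; conversely, every $\semigr$-homogeneous element of $R_+I$ is an integer combination of edge-differences $t^{c+u}-t^{c+v}$ with $c\neq 0$, since $R_+$ is spanned by the monomials $t^c$ with $c\neq 0$ and $I$ by the binomials $t^u-t^v$. Hence $(R_+I)_s$ is precisely the integer span of the edge-differences of $\Gamma_s$, which by a standard spanning-tree argument equals the sublattice of vectors summing to zero on each connected component of $\Gamma_s$. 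The component-sum map then identifies $I_s/(R_+I)_s$ with $\{w\in\mz^r\mid\sum_j w_j=0\}$, where $r$ is the number of connected components of $\Gamma_s$; in particular this quotient is $\mz$-free of rank $r-1$.

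Comparing with the relation $\sim$, one sees that $t^a\sim t^b$ holds if and only if $t^a-t^b\in(R_+I)_s$, i.e.\ if and only if $t^a,t^b$ lie in the same connected component of $\Gamma_s$; thus the components are exactly the $\sim$-classes and $r=|\Lambda_s|$. The chosen representatives $t^{a_1},\dots,t^{a_p}$ lie in distinct components, so under the component-sum map the images of $\mathcal{G}_s=\{t^{a_1}-t^{a_i}\mid i=2,\dots,p\}$ become $\{e_1-e_i\mid i=2,\dots,p\}$, a basis of the sum-zero lattice. Consequently the images of $\mathcal{G}_s$ form a $\mz$-basis of $I_s/(R_+I)_s$, and assembling over all $s$ shows that $I/R_+I$ is $\mz$-free with basis the images of $\coprod_{|\Lambda_s|>1}\mathcal{G}_s$, of total rank $\sum_{s\in\semigr}(|\Lambda_s|-1)$.

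Finally, generation of $I$ by $\coprod_s\mathcal{G}_s$ follows from the graded Nakayama lemma applied to the positively graded ring $R$ with $R_0=\mz$ (valid here because $\semigr_0=\{0\}$ forces every $s_i$ to have positive degree, so $R_+$ is the irrelevant ideal and $I$ is homogeneous and bounded below): a homogeneous set generates $I$ as soon as its images generate $I/R_+I$. Minimality and the cardinality are then immediate, since any homogeneous generating set must surject onto the free rank-$N$ module $I/R_+I$ with $N=\sum_{s}(|\Lambda_s|-1)$, hence has at least $N$ elements, while $\coprod_s\mathcal{G}_s$ attains this bound and maps to a basis, so no proper subset can generate. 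I expect the main obstacle to be the clean identification of $(R_+I)_s$ with the edge-difference lattice of $\Gamma_s$, together with the saturatedness point that this lattice is exactly the per-component sum-zero sublattice; the graded Nakayama step over $\mz$ is routine but worth spelling out precisely because $R_0$ is a ring rather than a field.
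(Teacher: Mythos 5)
Your overall skeleton (compute $I/R_+I$, exhibit the images of $\coprod_s\mathcal{G}_s$ as a $\mz$-basis, conclude by graded Nakayama) matches the paper's, but there is a genuine gap in your key identification of $(R_+I)_s$. You claim that if two monomials of the same $\semigr$-degree share a variable, say $t^a=t_it^{a'}$ and $t^b=t_it^{b'}$, then $t^a-t^b=t_i(t^{a'}-t^{b'})\in R_+I$. For this you need $t^{a'}-t^{b'}\in I$, i.e. $\sum_j a'_js_j=\sum_j b'_js_j$; but all you know is $s_i+\sum_j a'_js_j=s_i+\sum_j b'_js_j$, and cancelling $s_i$ is legitimate only in a cancellative monoid. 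Lemma~\ref{lemma:mingen} is stated for an \emph{arbitrary} finitely generated commutative monoid --- cancellativity is a hypothesis only of Lemma~\ref{lemma:joo} --- so this step fails at the stated level of generality. Concretely, let $\semigr$ be generated by two degree-one elements $s_1,s_2$ subject to the single relation $s_1+s_1=s_1+s_2$ (so $s_1\neq s_2$, and $\semigr$ is graded with $\semigr_0=\{0\}$). Then $t_1^2-t_1t_2\in I$, while $I_1=0$ and $I_0=0$, so $(R_+I)_2=R_1I_1+R_2I_0=0$ and hence $t_1^2\not\sim t_1t_2$; yet these two monomials share the factor $t_1$, so your graph $\Gamma_{2s_1}$ is connected and your computation predicts no minimal generator in this degree, whereas $t_1^2-t_1t_2$ is in fact an indispensable minimal generator of $I$. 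In general the components of your $\Gamma_s$ can be strictly coarser than the $\sim$-classes, and the count $\sum_s(|\Lambda_s|-1)$ comes out wrong.

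The gap is local and repairable: define the edges of $\Gamma_s$ not by ``sharing a variable'' but as the pairs $\{t^{c+u},t^{c+v}\}$ with $c\neq 0$ and $t^u-t^v\in I$ (equivalently, pairs of monomials whose difference lies in $R_+I$). Then your ``conversely'' direction shows $(R_+I)_s$ is exactly the edge-difference lattice, the spanning-tree argument identifies it with the per-component sum-zero sublattice, the components become exactly the $\sim$-classes, and the rest of your proof (freeness of $I_s/(R_+I)_s$, basis $\mathcal{G}_s$, graded Nakayama over $R_0=\mz$) goes through for arbitrary $\semigr$. With this fix your argument is essentially the paper's, which dispenses with the graph language and directly observes that $\sum_{t^a\in\Lambda}\mz t^a$ is a $\mz$-module complement of $R_+I$ in $R$: spanning is immediate from the definition of $\Lambda$, and directness holds because $R_+I$ is spanned by differences of $\sim$-equivalent monomials and is therefore annihilated by the coefficient-sum functional of each $\sim$-class. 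As written, your proof is valid under the extra hypothesis that $\semigr$ is cancellative (which does hold for the paper's application to $\semigr(\quiver,\theta)\subseteq\mz^{\quiver_1}$), but not for the lemma as stated.
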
 

\begin{proof} It is easy to see that a $\mz$-module direct complement of $R_+I$ in $R$ is $\sum_{t^a\in\Lambda}\mz t^a$. Thus the statement follows by the graded Nakayama Lemma. 
\end{proof} 

Next for a cancellative commutative monoid $\semigr$ we give a more explicit  description of the relation $\sim$ (a special case occurs in \cite{joo}). 
For some elements $s,v\in\semigr$ we say that $s$ {\it divides} $v$ and write $s\mid v$ if there exists an element $w\in\semigr$ with $v=s+w$. 
For any $s\in\semigr$ introduce a binary relation $\sim_s$ on the subset of $\{s_1,\dots,s_d\}$ consisting of the generators $s_i$ with $s_i\mid s$ as follows: 
\begin{eqnarray}\label{eq:joo} 
s_i\sim_s s_j \mbox{ if }i=j\mbox{ or there exist }u_1,\dots,u_k\in\{s_1,\dots,s_d\}
\\ \notag \mbox{ with }u_1=s_i,\ u_k=s_j,\ u_lu_{l+1}\mid s\mbox{ for }l=1,\dots,k-1.
\end{eqnarray} 
Obviously $\sim_s$ is an equivalence relation, and $s_i\sim_s s_j$ implies $s_i\sim_t s_j$ for any $s\mid t\in\semigr$. 

\begin{lemma}\label{lemma:joo} Let $\semigr$ be a cancellative commutative monoid generated by $s_1,\dots,s_d$. 
Take $t^a-t^b\in I$, so  $s:=\sum_{i=1}^da_is_i=\sum_{j=1}^db_js_j\in\semigr$. Then the following are equivalent: 
 \begin{itemize} 
\item[(i)] $t^a-t^b\in R_+I$;  
\item[(ii)] For some $t_i\mid t^a$ and  $t_j\mid t^b$ we have $s_i\sim_s s_j$; 
\item[(iii)] For all  $t_i\mid t^a$ and  $t_j\mid t^b$ we have $s_i\sim_s s_j$.  
\end{itemize} 
\end{lemma}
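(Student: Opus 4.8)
The plan is to establish the cyclic implications $(iii)\Rightarrow(ii)\Rightarrow(i)\Rightarrow(iii)$, exploiting the combinatorial description \eqref{eq:joo} of $\sim_s$ together with the binomial description \eqref{eq:binomspan} of $I$. The implication $(iii)\Rightarrow(ii)$ is immediate once we observe that $t^a-t^b\in I$ forces $a\neq 0$ and $b\neq 0$ (otherwise $s=0$ and, since the $s_i$ are nonzero generators of a graded monoid in the relevant application, both sides would be trivial); so there exist $t_i\mid t^a$ and $t_j\mid t^b$, and the universally quantified statement specializes to the existential one.

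The heart of the argument is $(ii)\Rightarrow(i)$. Here I would first reduce to the case $k=2$ in \eqref{eq:joo}, i.e.\ to a single step $s_is_j\mid s$, and argue by induction on the length $k$ of the chain $u_1=s_i,\dots,u_k=s_j$. The key reduction is the following: if $s_is_j\mid s$, write $s=s_i+s_j+w$ for some $w\in\semigr$, and choose (using $t^a\mid$-divisibility and cancellativity) monomials $t^c,t^d$ with $\sum c_ls_l=\sum d_ls_l=w$ such that $t^a=t_i\cdot t^c$ and $t^b=t_j\cdot t^d$. Then I can write
\[
t^a-t^b=(t_i t^c - t_i t^d)+(t_i t^d - t_j t^d)=t_i(t^c-t^d)+t^d(t_i-t_j).
\]
Both $t^c-t^d$ and $t_i-t_j$ lie in $I$ by \eqref{eq:binomspan} (the first because $t^c,t^d$ evaluate to $w$, the second because $s_i+w=s_j+w$ after adding, hence $s_i$ and $s_j$ become equal modulo the relation; more precisely $t_i-t_j\in I$ exactly when $s_i=s_j$, so one must instead use that $t_is_i\mid s$ pairs the two generators through $w$). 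This places $t^a-t^b$ inside $R_+I$. The induction on the chain length then threads through intermediate generators $u_l$, each step contributing a factor in $R_+$, and concatenating the memberships in $R_+I$ (which is an ideal) closes the argument.

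For $(i)\Rightarrow(iii)$ I would argue contrapositively: suppose some $t_i\mid t^a$ and $t_j\mid t^b$ have $s_i\not\sim_s s_j$. The relation $\sim_s$ partitions the divisor-generators of $s$ into $\sim_s$-classes; I would construct a monoid homomorphism (or a grading/valuation) $\chi:\semigr\to\mz$ that separates the class of $s_i$ from that of $s_j$ in a way compatible with $s$, and extend it to a ring homomorphism $R\to\mc[\mathbb{Z}]$ detecting the failure of $t^a-t^b$ to lie in $R_+I$. Concretely, membership in $R_+I$ is equivalent to $t^a$ and $t^b$ lying in the same $\sim$-class (Lemma~\ref{lemma:mingen}), and two monomials mapping to $s$ are $\sim$-equivalent precisely when their generator-supports can be linked by the chain condition \eqref{eq:joo}; so the obstruction is exactly an unbridgeable gap between $\sim_s$-classes.

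\textbf{Main obstacle.}
I expect the delicate point to be the bookkeeping in $(ii)\Rightarrow(i)$: ensuring that at each step of the chain one can genuinely extract the shared factor $t^d$ (equivalently, that cancellativity of $\semigr$ lets us realize the "common complement" $w$ as an honest monomial dividing both $t^a$ and $t^b$ after the swap), and that the telescoping sum stays inside the \emph{ideal} $R_+I$ rather than merely inside $I$. The condition $u_lu_{l+1}\mid s$ is precisely what guarantees each elementary swap is legal, so the proof must track how divisibility in $\semigr$ translates into divisibility of monomials in $R$; this is where cancellativity is essential and where a naive argument would break down.
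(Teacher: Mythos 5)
Your reduction in (ii) $\Rightarrow$ (i) contains a genuine error, and it occurs precisely where the key idea of the argument is needed. Suppose $s_is_j\mid s$, say $s=s_i+s_j+w$, and write $t^a=t_it^c$, $t^b=t_jt^d$. Then cancellativity forces $\phi(t^c)=x^{s_j+w}$ and $\phi(t^d)=x^{s_i+w}$: the two complements do \emph{not} both evaluate to $w$ (your requested choice with $\sum c_ls_l=\sum d_ls_l=w$ is in fact impossible, since $\phi(t_it^c)$ would then be $x^{s_i+w}\neq x^s$). Consequently neither $t^c-t^d$ nor $t_i-t_j$ lies in $I$ unless $s_i=s_j$, so your telescoping $t^a-t^b=t_i(t^c-t^d)+t^d(t_i-t_j)$ is a sum of two terms neither of which need lie in $R_+I$; your own parenthetical remark notices the problem but does not repair it. The repair needs a \emph{bridge monomial}: since $w\in\semigr$ is a sum of generators, choose $t^e$ with $\phi(t^e)=x^w$ and write
\[
t^a-t^b \;=\; t_i\bigl(t^c-t_jt^e\bigr)+t_j\bigl(t_it^e-t^d\bigr),
\]
where now both parenthesized differences lie in $I$ by cancellativity, so both summands lie in $R_+I$. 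This interpolating monomial $t_it_jt^e$ mapping to $x^s$ is exactly what the divisibility $u_lu_{l+1}\mid s$ in \eqref{eq:joo} supplies at each step of the chain, and it is what the monomials $w_l$ accomplish in the paper's telescoping identity \eqref{eq:3binomspan}; without it the induction on chain length has nothing to stand on.

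The direction (i) $\Rightarrow$ (iii) is circular as written. You appeal to the claim that two monomials mapping to $x^s$ are $\sim$-equivalent precisely when their generator-supports are linked by the chain condition \eqref{eq:joo} --- but that claim \emph{is} the lemma being proved; the definition of $\sim$ and Lemma~\ref{lemma:mingen} give only tautologies here, and the separating valuation $\chi:\semigr\to\mz$ is never constructed (it is not clear such a homomorphism exists in general). What is actually required is a combinatorial argument of the following kind, which is absent from your proposal: starting from $t^a-t^b=\sum_{l=1}^kt_{i_l}(t^{a_l}-t^{b_l})$ with each $t^{a_l}-t^{b_l}\in I$, reorder the summands so that the monomials cancel consecutively ($t_{i_1}t^{a_1}=t^a$, $t_{i_l}t^{b_l}=t_{i_{l+1}}t^{a_{l+1}}$, $t_{i_k}t^{b_k}=t^b$), merge adjacent terms having $t_{i_l}=t_{i_{l+1}}$, and then observe that for distinct consecutive variables $t_{i_{l+1}}$ must divide $t^{b_l}$, which gives $s_{i_l}s_{i_{l+1}}\mid s$ and hence a chain certifying $s_{i_1}\sim_s s_{i_k}$; this proves (i) $\Rightarrow$ (ii), and (ii) $\Rightarrow$ (iii) then follows from the easy observation that any two generators dividing a single monomial with image $x^s$ are already $\sim_s$-related by a length-two chain. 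As it stands, both nontrivial implications in your proposal remain unproved.
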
 

\begin{proof} (iii) trivially implies (ii). Moreover, if $t_i,t_j$ are two different variables occuring in $t^a$ with $\sum a_is_i=s$, then $s_is_j\mid s$, hence 
taking $k=2$ and $u_1=s_1$, $u_2=s_2$ in \eqref{eq:joo} we see that  $s_i\sim_s s_j$. This shows that (ii) implies (iii). 

To show that (ii)  implies (i)  assume  that for some $t_i\mid t^a$ and $t_j\mid t^b$ we have $s_i\sim_s s_j$.  If $s_i=s_j$, then $t^a$ and $t^b$ have a common variable, say  $t_1$,  so $t^a=t_1t^{a'}$ and $t^b=t_1t^{b'}$ for some 
$a',b'\in\mn_0^d$. We have 
\[x^{s_1}\phi(t^{a'}-t^{b'})=\phi(t_1(t^{a'}-t^{b'}))=\phi(t^a-t^b)=0\]
hence $x^{s_1}\phi(t^{a'})=x^{s_1}\phi(t^{b'})$. Since $\semigr$ is cancellative, we conclude $\phi(t^{a'})=\phi(t^{b'})$, thus $t^{a'}-t^{b'}\in I$, implying in turn that 
$t^a-t^b=t_1(t^{a'}-t^{b'})\in R_+I$. 
If $s_i\neq s_j$, then there exist 
$z_1,\dots,z_k\in\{t_1,\dots,t_d\}$ such that $u_l\in\semigr$ with $\phi(z_l)=x^{u_l}$ satisfy \eqref{eq:joo}. 
Then there exist monomials (possibly empty) $w_0,\dots,w_k$ in the variables $t_1,\dots.,t_d$ such that 
\[z_1w_0=t^a,\quad \phi(z_lz_{l+1}w_l)=x^s\quad  (l=1,\dots,k-1), \quad z_kw_k=t^b.\] 
 It follows that 
 \begin{equation}\label{eq:3binomspan} t^a-t^b=z_1(w_0-z_2w_1)+\sum_{l=2}^{k-1}z_l(z_{l-1}w_{l-1}-z_{l+1}w_l)+z_k(z_{k-1}w_{k-1}-w_k).\end{equation}
 Note that $\phi(z_1w_0-z_1z_2w_1)=x^s-x^s=0$. Hence $z_1(w_0-z_2w_1)\in I$.  Since $\semigr$ is cancellative, we conclude that $z_1(w_0-z_2w_1)\in R_+I$. 
 Similarly all the other summands on the right hand side of  \eqref{eq:3binomspan} belong to $R_+I$, hence $t^a-t^b\in R_+I$. 

Finally we show that (i) implies (ii). 
Suppose that $t^a-t^b\in R_+I$. By \eqref{eq:binomspan} we have 
\begin{equation}\label{eq:2binomspan} 
t^a-t^b=\sum_{l=1}^k t_{i_l}(t^{a_l}-t^{b_l})\ \mbox{ where }\ t^{a_l}-t^{b_l}\in I\mbox{ and }i_l\in\{1,\dots,d\}\end{equation} 
After a possible renumbering we may assume that 
\begin{equation}\label{eq:cancellations}t_{i_1}t^{a_1}=t^a, \ t_{i_l}t^{b_l}=t_{i_{l+1}}t^{a_{l+1}}\  \mbox{ for }\  l=1,\dots,k-1, \ \mbox{ and }\ t_{i_k}t^{b_k}=t^b.\end{equation} 
Observe that if $t_{i_l}=t_{i_{l+1}}$ for some $l\in\{1,\dots,k-1\}$, then necessarily $t^{b_l}=t^{a_{l+1}}$, hence  
$t_{i_l}(t^{a_l}-t^{b_l})+t_{i_{l+1}}(t^{a_{l+1}}-t^{b_{l+1}})=t_{i_l}(t^{a_l}-t^{b_{l+1}})$. 
Thus in \eqref{eq:2binomspan} we may replace the sum of the $l$th and $(l+1)$st terms by a single summand $t_{i_l}(t^{a_l}-t^{b_{l+1}})$. 
In other words, we may achieve that in \eqref{eq:2binomspan} we have $t_{i_l}\neq t_{i_{l+1}}$ for each $l=1,\dots,k-1$, in addition to \eqref{eq:cancellations}. 
If $k=1$, then $t^a$ and $t^b$ have a common variable and (ii) obviously holds. From now on assume that $k\ge 2$. 
From $t_{i_l}t^{a_l}=t_{i_{l+1}}t^{b_l}$ and the fact that $t_{i_l}$ and $t_{i_{l+1}}$  are different variables in $\mz[t_1,\dots,t_d]$ we deduce that 
$t^{a_l}=t_{i_{l+1}}t^{c_l}$ for some $c_l\in\mn_0^d$, implying that 
$x^s=\phi(t_{i_l}t^{a_l})=\phi(t_{i_l}t_{i_{l+1}}t^{c_l})=\phi(t_{i_l})\phi(t_{i_{l+1}})\phi(t^{c_l})$. Thus $u_l:=s_{i_l}$ satisfy \eqref{eq:joo} and  hence $s_{i_1}\sim_s s_{i_k}$. 
\end{proof}

\begin{corollary}\label{cor:joo} 
Suppose that $\semigr=\coprod_{k=0}^\infty\semigr_k$ is a finitely generated graded cancellative commutative monoid generated by $\semigr_1=\{s_1,\dots,s_d\}$.  
The kernel of $\phi:\mz[t_1,\dots,t_d] \to \mz[\semigr]$, $t_i\mapsto x^{s_i}$ is generated by homogeneous elements of degree at most $r$ (with respect to the standard grading on $\mz[t_a,\dots,t_d]$) if and only if for all $k>r$ and 
$s\in\semigr_k$, the elements in 
$\semigr_1$ that divide $s$ in the monoid $\semigr$ form a single equivalence class with respect to  $\sim_s$. 
\end{corollary}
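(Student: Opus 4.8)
The plan is to read everything off the two preceding lemmas, so the proof will be essentially bookkeeping. First I would note that the standard grading on $\mz[t_1,\dots,t_d]$ (each $t_i$ of degree $1$) agrees with the $\semigr$-grading through $\phi$: if $t^a$ is a monomial with $\sum_i a_is_i=s\in\semigr_k$, then since every $s_i$ lies in $\semigr_1$ we have $\sum_i a_is_i\in\semigr_{\sum_i a_i}$, and as the grading is a disjoint union this forces $\sum_i a_i=k$. Thus a monomial has standard degree $k$ exactly when its image lies in $\semigr_k$, so for $s\in\semigr_k$ the binomials in $\mathcal{G}_s$ of Lemma~\ref{lemma:mingen} are homogeneous of degree $k$. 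Consequently the minimal homogeneous generating system $\coprod_{|\Lambda_s|>1}\mathcal{G}_s$ produced there has a generator in degree $k$ if and only if there is some $s\in\semigr_k$ with $|\Lambda_s|>1$.

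Next I would translate ``$I$ is generated in degree at most $r$'' into a statement about this explicit system, namely that $|\Lambda_s|=1$ for every $s\in\semigr_k$ with $k>r$ (here $\Lambda_s\neq\emptyset$ since $s$ is a nonnegative integer combination of the $s_i$). One direction is immediate: if the system of Lemma~\ref{lemma:mingen} has no generator of degree $>r$, then $I$ is generated by those elements, all of degree $\le r$. For the converse, suppose $I$ is generated by elements of degree $\le r$ but some $s\in\semigr_k$ with $k>r$ has $|\Lambda_s|>1$, so $\mathcal{G}_s$ contains a generator $g$ of degree $k$. Expressing $g$ through generators of degree $\le r<k$ forces polynomial coefficients of positive degree, whence $g\in R_+I$; this contradicts that the elements of the system of Lemma~\ref{lemma:mingen} are $\mz$-independent modulo $R_+I$. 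This is just the graded Nakayama argument, already built into Lemma~\ref{lemma:mingen}.

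It then remains, for a fixed $s\in\semigr$, to identify the condition $|\Lambda_s|=1$ (all monomials of $\semigr$-degree $s$ are $\sim$-equivalent) with the stated condition that the generators $s_i$ dividing $s$ form a single $\sim_s$-class; this is exactly where Lemma~\ref{lemma:joo} enters. For the forward implication, given $s_i\mid s$ and $s_j\mid s$, I would write $s=s_i+\sum_l c_ls_l$ and $s=s_j+\sum_l c'_ls_l$ (possible since the $s_l$ generate $\semigr$) to obtain monomials $t^a$ containing $t_i$ and $t^b$ containing $t_j$, both of degree $s$; since $|\Lambda_s|=1$ we have $t^a\sim t^b$, so $s_i\sim_s s_j$ by Lemma~\ref{lemma:joo}(iii). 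For the converse, any two monomials $t^a,t^b$ of degree $s$ contain variables $t_i,t_j$ with $s_i,s_j\mid s$; the single-class hypothesis gives $s_i\sim_s s_j$, and then Lemma~\ref{lemma:joo}(ii) yields $t^a-t^b\in R_+I$, i.e.\ $t^a\sim t^b$, so all monomials of degree $s$ coincide in $\Lambda$. Combining the three steps gives the corollary.

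I do not expect a serious obstacle: the substance is carried entirely by Lemmas~\ref{lemma:mingen} and \ref{lemma:joo}, and the only points demanding care are the degree bookkeeping that matches the standard grading on $\mz[t_1,\dots,t_d]$ with the $\semigr$-grading, and the routine verification that the explicit minimal generating system controls generation in bounded degree.
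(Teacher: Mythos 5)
Your proposal is correct and follows exactly the route the paper intends: the paper's own proof consists of the single sentence that the corollary is an immediate consequence of Lemma~\ref{lemma:mingen} and Lemma~\ref{lemma:joo}, and your three steps (matching the standard grading with the $\semigr$-grading, translating bounded-degree generation into $|\Lambda_s|=1$ for $\deg s>r$ via the minimality/graded Nakayama statement, and identifying $|\Lambda_s|=1$ with the single-$\sim_s$-class condition via the equivalences of Lemma~\ref{lemma:joo}) are precisely the bookkeeping that "immediate" suppresses. No gaps.
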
 

\begin{proof} This is an immediate consequence of Lemma~\ref{lemma:mingen} and Lemma~\ref{lemma:joo}.  
\end{proof} 

\section{Equations of toric quiver varieties}\label{sec:equations}

Corollary~\ref{cor:joo} applies for the monoid $\semigr(\quiver,\theta)$, where the grading is given by 
$\semigr(\quiver,\theta)_k=\polytope(\quiver,k\theta)\cap\mz^{\quiver_1}$. 
Recall that we may identify the complex semigroup algebra $\mc[\semigr(\quiver,\theta)]$ and the homogeneous coordinate ring 
$\homcoord(\quiver,\theta)$ by identifying the basis element $x^m$ in the semigroup algebra to the element of $\homcoord(\quiver,\theta)$ denoted by the same symbol $x^m$. 
Introduce a variable $t_m$ for each $m\in \polytope(\quiver,\theta)\cap\mz^{\quiver_1}$, take the polynomial ring 
\[F:=\mc[t_m\mid m\in \polytope(\quiver,\theta)\cap\mz^{\quiver_1} ]\] 
and consider the surjection 
\begin{equation}\label{eq:varphi}  \varphi:F\to\homcoord(\quiver,\theta), \qquad t_m\mapsto x^m.\end{equation}  
The kernel $\ker(\varphi)$ is a homogeneous ideal in the polynomial ring $F$ (endowed with the standard grading) called  {\it the ideal of relations among the $x^m$}, for which Corollary~\ref{cor:joo} applies. 
Note also that in the monoid $\semigr(\quiver,\theta)$ we have that $m\mid n$ for some $m,n$ if and only if $m\le n$, 
where the partial ordering $\le$ on $\mz^{\quiver_1}$ is defined by setting $m\le n$ if $m(a)\le n(a)$ for all  $a\in\quiver_1$. 
The following statement is a special case of the main result (Theorem 2.1) of \cite{yamaguchi-ogawa-takemura}: 

\begin{proposition}\label{prop:theta1}
Let $\quiver=K(n,n)$ be the complete bipartite quiver with $n$ sources and $n$ sinks, with a single arrow from each source to each sink. Let $\theta$ be the weight with $\theta(v)=-1$ for each source and $\theta(v)=1$ for each sink,  and   $\varphi:F \to \homcoord(Q,\theta)$ given in \eqref{eq:varphi}. 
Then the ideal $\ker(\varphi)$ is generated by elements of degree at most $3$.
\end{proposition}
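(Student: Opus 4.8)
The plan is to read off the combinatorial model and then apply Corollary~\ref{cor:joo}. For $\quiver=K(n,n)$ with the given $\theta$, the flow equations say exactly that at each source the outgoing coordinates sum to $1$ and at each sink the incoming coordinates sum to $1$; writing $x=(x_{ij})$ with $i$ indexing sources and $j$ indexing sinks, $\polytope(\quiver,\theta)$ is the Birkhoff polytope of doubly stochastic $n\times n$ matrices. Hence $\semigr(\quiver,\theta)_k=\polytope(\quiver,k\theta)\cap\mz^{\quiver_1}$ is the set of nonnegative integer $n\times n$ matrices with all row and column sums equal to $k$, the degree-$1$ generators are the permutation matrices $P_\sigma$ ($\sigma\in S_n$), and divisibility in $\semigr(\quiver,\theta)$ is the entrywise order $\le$. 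By Corollary~\ref{cor:joo}, $\ker(\varphi)$ is generated in degree $\le 3$ if and only if for every $k>3$ and every matrix $s$ with line sums $k$ the permutation matrices $P_\sigma\le s$ form a single class under $\sim_s$. Unwinding \eqref{eq:joo}, $P_\sigma$ and $P_\tau$ are related in one step precisely when $P_\sigma+P_\tau\le s$, and $\sim_s$ is the transitive closure of this relation; so I must prove that the graph $\Gamma_s$ on $\{P_\sigma\le s\}$, with an edge whenever $P_\sigma+P_\tau\le s$, is connected once $k\ge 4$.

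Two elementary observations will organise the argument. First, $P_\sigma+P_\tau\le s$ holds automatically whenever $\sigma$ and $\tau$ occupy no common cell, and it can fail only at a shared \emph{critical} cell, i.e. a position $(i,j)$ with $\sigma(i)=\tau(i)=j$ and $s_{ij}=1$; thus edge-disjoint perfect matchings of the support graph $G_s$ are always adjacent in $\Gamma_s$. Second, any Birkhoff--von Neumann decomposition $s=P_{\rho_1}+\dots+P_{\rho_k}$ is a clique in $\Gamma_s$, and every $P_\sigma\le s$ lies in such a clique, because $s-P_\sigma\ge 0$ has line sums $k-1\ge 3$ and so decomposes further. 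Consequently the difficulty is concentrated in the case where $s$ is a $0$--$1$ matrix, i.e. where every occupied cell is critical: then $G_s$ is a $k$-regular bipartite graph, adjacency in $\Gamma_s$ is exactly edge-disjointness of perfect matchings, and the statement becomes that the edge-disjointness graph of perfect matchings of a $k$-regular bipartite graph is connected for $k\ge 4$.

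For the connectivity itself I would argue by a rerouting/augmenting induction on the number of cells shared by $\sigma$ and $\tau$, exploiting the slack of $k\ge 4$: after deleting the cells used by two permutations the remaining matrix still has all line sums $\ge k-2\ge 2$, which should give enough room to build a perfect matching avoiding prescribed critical cells and thereby move $\sigma$ off a shared critical cell without creating new ones. The hard part is exactly this matching-existence step, and it is genuinely delicate. The naive hope of a single common neighbour $\pi\le s-\max(P_\sigma,P_\tau)$ can fail already at $k=4$: there are $0$--$1$ matrices $s$ for which $\sigma$ and $\tau$ force three rows of $s-\max(P_\sigma,P_\tau)$ to be supported on only two columns, so Hall's condition is violated and no common neighbour exists, and $\sigma$ must be joined to $\tau$ through a longer chain in $\Gamma_s$. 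The role of the hypothesis $k\ge 4$ (equivalently $k>3$, which is what pins the bound at degree $3$) is precisely to keep Hall's condition available after two deletions; by contrast, for $k=2$ the graph $G_s$ is a disjoint union of even cycles, each matching has a unique edge-disjoint partner, and $\Gamma_s$ is disconnected, showing that some lower bound on $k$ is unavoidable. The main obstacle is therefore to convert the slack $k-2\ge 2$ into a clean, uniform augmenting argument that produces the required chain in $\Gamma_s$ for every $0$--$1$ matrix $s$ with line sums $\ge 4$; the general case should then follow from the same argument, the presence of non-critical cells (entries $\ge 2$) only making adjacencies in $\Gamma_s$ easier to achieve.
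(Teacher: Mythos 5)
Your reduction is sound and matches the paper's own framework: via Corollary~\ref{cor:joo} the statement is equivalent to the assertion that for every $k\ge 4$ and every nonnegative integer matrix $s$ with all line sums equal to $k$, the permutation matrices $P_\sigma\le s$ form a single class under $\sim_s$, where one step means $P_\sigma+P_\tau\le s$; your identification of $\polytope(\quiver,\theta)$ with the Birkhoff polytope, of divisibility with the entrywise order, and your preliminary observations (Birkhoff--von Neumann decompositions are cliques in $\Gamma_s$, edge-disjoint matchings are always adjacent, $k=2$ gives disconnected examples) are all correct. But the proposal stops exactly where the proof has to begin: the connectivity of $\Gamma_s$ is never established. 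You describe a ``rerouting/augmenting induction on the number of cells shared by $\sigma$ and $\tau$'' and then concede that the matching-existence step is ``genuinely delicate'' and that the ``main obstacle'' is to convert the slack $k-2\ge 2$ into a uniform argument. That obstacle is the entire content of the proposition; as you yourself note, the naive common-neighbour step can fail already at $k=4$, so without the missing lemma nothing is proved. In addition, your reduction of the general case to the $0$--$1$ case is asserted (``non-critical cells only make adjacencies easier'') rather than argued, and it is not clear how to make it precise; the paper never performs such a reduction.

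For comparison, the paper resolves the difficulty by organizing the induction differently (Lemma~\ref{lemma:nearperfect}): instead of chaining directly between two given matchings inside a fixed $s$, it works with the two decompositions $s=m_1+\cdots+m_k=q_1+\cdots+q_k$ and performs degree-$3$ exchanges $m_1+m_2+m_j=m_1'+m_2'+m_j'$ that increase, one arrow at a time, the overlap of a near perfect matching $p\le m_1+m_2$ with $q_1$. The existence of each exchange is proved by applying the K\"onig--Hall theorem to the multigraph $r=m_1+m_2+m_j-p'$, whose degrees are all $2$ except for a single $3$ on each side of the bipartition --- this degree pattern is what makes Hall's condition automatic, i.e. it is precisely the ``clean, uniform'' step you were looking for --- and the hypothesis $k\ge 4$ enters only to provide a spare element $m_i$ with $i\notin\{1,2,j\}$, which keeps all old and new elements in one $\sim_s$-class throughout the process. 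Once $q_1\le m_1+m_2+m_3$ is reached, the compatibility $q_1+m_4\le s$ closes the argument. The content of Lemma~\ref{lemma:nearperfect} is essentially the statement your induction step needs; as submitted, your proposal is a correct setup with the central lemma missing.
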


For sake of completeness we present a proof. The argument below is based on the key idea of \cite{yamaguchi-ogawa-takemura}, but we use a different language and obtain a very short derivation of the result. 
For this quiver and weight generators of $\homcoord(\quiver,\theta)$ correspond to perfect matchings of the underlying graph of $K(n,n)$. Recall that a {\it perfect matching} 
of $K(n,n)$ is a set of  arrows $\{a_1,\dots,a_n\}$ such that  
for each source  $v$ there is a unique $i$ such that $a_i^-=v$ and for each sink $w$ there is a unique $j$ such that $a_j^+=w$. 
Now $\polytope(\quiver,\theta)\cap\mz^{\quiver_1}$ in this case consists of the characteristic functions of perfect matchings of $K(n,n)$. By a \textit{near perfect matching} we mean an incomplete matching that covers all but $2$ vertices ($1$ sink and $1$ source). Abusing language we shall freely identify a (near) perfect matching and its characteristic function (an element of $\mn_0^{\quiver_1}$). 
First we show the following lemma:

\begin{lemma} \label{lemma:nearperfect}
Let $\theta$ be the weight for $\quiver=K(n,n)$ as above, and $m_1+\dots+m_k=q_1+\dots+q_k$ for some  $k\ge 4$ and $m_i,q_j \in \polytope(\quiver,\theta)\cap\mz^{\quiver_1}$. Furthermore let us assume that for some $0\leq l \leq  n-2$ there is a near perfect matching $p$ such that $p \leq m_1 + m_2$ and $p$ contains $l$ arrows from $q_1$. Then there is a $j \geq 3$ and $m'_1,m'_2,m'_j\in \polytope(\quiver,\theta)\cap\mz^{\quiver_1}$ and a near perfect matching $p'$ such that $m_1 + m_2 + m_j=m'_1 + m'_2 + m'_j$, $p' \leq m'_1 + m'_2$ and $p'$ contains $l + 1$ arrows from $q_1$.
\end{lemma}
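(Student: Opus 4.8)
The plan is to prove Lemma~\ref{lemma:nearperfect} by a local surgery argument on matchings, treating the near perfect matching $p$ together with the arrows from $q_1$ as combinatorial data to be improved one arrow at a time. Since $p \le m_1 + m_2$ is a near perfect matching, it misses exactly one source $v_0$ and one sink $w_0$. We are given that $p$ shares $l$ arrows with $q_1$; the goal is to rearrange the three summands $m_1, m_2, m_j$ (for some suitable $j \ge 3$) to produce a new near perfect matching $p'$, again dominated by the first two summands, that agrees with $q_1$ in one more arrow.

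\medskip

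\emph{First} I would identify the arrow of $q_1$ that we wish to incorporate. Since $p$ covers all but $v_0$ and $w_0$, there is a well-defined arrow $a_0 \in q_1$ with $a_0^- = v_0$ (the source missed by $p$). The natural candidate for $p'$ is to adjoin $a_0$ to $p$ and then remove whatever arrow of $p$ is displaced at the sink $a_0^+$, so that $p'$ still misses one source and one sink but now contains $a_0$ in addition to (at least) the $l$ arrows $p$ already shared with $q_1$. The content of the lemma is that this surgery can be realized \emph{inside} the sum $m_1 + m_2 + m_j$ for an appropriate third index $j$, after redistributing arrows among three perfect matchings. Concretely, I would try to locate $a_0$ among the matchings $q_1, \dots, q_k$ (it lies in $q_1$ by hypothesis) and, using the hypothesis $m_1 + \dots + m_k = q_1 + \dots + q_k$, trace an alternating path or cycle in the union of the multiset of arrows that allows $a_0$ to be swapped into the first two summands while some arrow is pushed out into a third summand $m_j$.

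\medskip

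\emph{The key step} is to set up the alternating-path/cycle exchange correctly. Consider the arrow $a_0 \in q_1$. If $a_0$ already appears in $m_1 + m_2$, then simply augmenting $p$ by $a_0$ and deleting the conflicting arrow may produce $p'$ directly with no need to touch a third summand, so the interesting case is when $a_0$ appears only among $m_3, \dots, m_k$; pick $j$ so that $a_0 \in m_j$. Now I would form the union of the arrow-supports of $m_1 + m_2$ and trace the component through $a_0$. Because $m_1, m_2$ are perfect matchings and $p \le m_1 + m_2$ is a near perfect matching missing $v_0, w_0$, the symmetric difference structure gives an alternating path from $v_0$ to $w_0$; grafting $a_0$ onto the relevant end lets me reroute one arrow of $m_j$ into $m_1$ or $m_2$ and simultaneously push out an arrow of $m_1 + m_2$ into $m_j$, preserving that each $m'_i$ is a perfect matching (hence in $\polytope(\quiver,\theta) \cap \mz^{\quiver_1}$) and preserving the total sum $m_1 + m_2 + m_j = m'_1 + m'_2 + m'_j$. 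The new $p'$ is then obtained as the near perfect matching dominated by $m'_1 + m'_2$ that contains $a_0$ together with the previous $l$ common arrows.

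\medskip

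\emph{The main obstacle} I anticipate is controlling the parity/endpoint bookkeeping so that after the swap the resulting $p'$ is genuinely a \emph{near} perfect matching (missing exactly one source and one sink) and genuinely shares $l+1$ rather than fewer arrows with $q_1$ — i.e. verifying that incorporating $a_0$ does not accidentally eject one of the $l$ arrows already agreeing with $q_1$. This requires checking that the alternating path used for the exchange can be chosen disjoint from those $l$ shared arrows, which is where the freedom in choosing the third index $j$ and the hypothesis $l \le n - 2$ (guaranteeing there is still room to improve) must be used. I would handle this by arguing that the shared arrows lie on vertices already saturated compatibly by $q_1$, so the augmenting path, being forced to start at the uncovered source $v_0$, cannot pass through them without contradicting that $p$ already matched those vertices via $q_1$-arrows; the hypothesis $k \ge 4$ ensures enough summands are available that a clean third matching $m_j$ can always be selected.
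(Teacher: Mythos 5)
Your set-up (take the $q_1$-arrow $a_0$ at the uncovered source, pick $j\ge 3$ arbitrarily if $a_0$ lies in $m_1+m_2$ and otherwise with $a_0\in m_j$, then adjoin $a_0$ to $p$ and delete a displaced arrow) matches the paper's, but the core of your argument --- realizing the surgery by an alternating-path exchange between $m_1,m_2$ and $m_j$ --- has a genuine gap. First, a single ``reroute one arrow of $m_j$ into $m_1$ and push one arrow out'' cannot preserve the perfect-matching property: in $K(n,n)$ two distinct arrows never have the same pair of endpoints, so any genuine exchange must swap along an entire alternating cycle $C$ in $m_1\cup m_j$ through $a_0$. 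But after such a swap, the new first matching $m_1'=(m_1\setminus C)\cup(m_j\cap C)$ has lost all arrows of $m_1\cap C$, and nothing prevents $C$ from passing through arrows of $p$ that are covered \emph{only} by $m_1$ (not by $m_2$); for those arrows the domination $p'\le m_1'+m_2$ simply fails. Your proposed remedy addresses only the $l$ arrows shared with $q_1$ and rests on a non-argument: the cycle $C$ lives in $m_1\cup m_j$, both of which are perfect matchings covering every vertex, so there is no contradiction in $C$ passing through vertices already matched by $p$, whether via $q_1$-arrows or not. The hypotheses $k\ge 4$ and $l\le n-2$ do not repair this.

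The paper avoids tracking arrow movements altogether. It first builds $p'$ explicitly (with a case distinction you also need but elide: if $a_0$ ends at the uncovered sink, then $p+a_0$ is a \emph{perfect} matching and one removes an arrow of $p$ not in $q_1$, which exists precisely because $l\le n-2$; if $a_0$ ends at a covered sink $w_i$, one removes the $p$-arrow at $w_i$, which cannot lie in $q_1$ since $q_1$'s unique arrow at $w_i$ is $a_0$ itself). Then, instead of modifying $m_1,m_2,m_j$ by swaps, it considers the residual multiset $r:=m_1+m_2+m_j-p'\in\mn_0^{\quiver_1}$, notes that $r$ has degree $2$ at every vertex except degree $3$ at the one uncovered source and sink, verifies Hall's condition by this degree count, and extracts a perfect matching $m_j'$ from the support of $r$ by the K\"onig--Hall theorem; the remaining $2$-regular multigraph $m_1+m_2+m_j-m_j'$ splits into two perfect matchings $m_1'+m_2'$ (again K\"onig--Hall, i.e.\ normality). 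The inequality $p'\le m_1'+m_2'$ then holds automatically, because $m_j'$ was chosen inside $r=m_1+m_2+m_j-p'$. This global ``choose the third matching in the complement of $p'$'' step is the idea your proposal is missing, and without it (or a substitute that controls which arrows the alternating cycle may use) the proof does not go through.
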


\begin{proof}
Let $v_1, \dots, v_n$ be the sources and $w_1, \dots, w_n$  the sinks of $\quiver$, and let us assume that $p$ covers all vertices but $v_1$ and $w_1$. Let $a$ be the arrow incident to $v_1$ in $q_1$. If $a$ is contained in $m_1 + m_2$ then pick an arbitrary $j \geq 3$, otherwise take $j$ to be such that $m_j$ contains $a$. We can obtain a near perfect matching $p' < m_1 + m_2 + m_j$ that intersects $q_1$ in $l + 1$ arrows in the following way: if $a$ connects $v_1$ and $w_1$ we add $a$ to $p$ and remove one arrow from it that was not contained in $q_1$ (this is possible due to $l \leq n-2$); if $a$ connects $v_1$ and $w_i$ for some $i \neq 1$ then we add $a$ to $p$ and remove the arrow from $p$ which was incident to $w_i$ (this arrow is not contained in $q_1$). 
Set $r:=m_1+m_2+m_j-p'\in\mn_0^{\quiver_1}$, and denote by $S$ the subquiver of $K(n,n)$ with $S_0=\quiver_0$ and $S_1=\{c\in\quiver_1\mid r(c)\ne 0\}$.  
We have $S_0=S_0^-\coprod S_0^+$ where  $S_0^-$ denotes the set of sources and $S_0^+$ denotes the set of sinks. For a vertex $v\in S_0$ set 
$\deg_r(v):= \sum_{c\in S_1} |r(c)|$. 
We have that  $\deg_r(v)=3$ for exactly one source and for exactly one sink, and  $\deg_r(v)=2$ for all the remaining vertices of $S$. Now let 
$A$ be an arbitrary subset of $S_0^-$, and denote by $B$ the subset of $S_0^+$ consisting of the sinks that are connected by an arrow in $S$ to a vertex in $A$. 
We have the inequality $\sum_{v\in A}\deg_r(v)\le \sum_{w\in B} \deg_r(w)$. Since on both sides of this inequality  the summands are $2$ or $3$, and $3$ can occur at most once on each side, we conclude that $|B|\ge |A|$.  Applying the K\"onig-Hall Theorem (cf. Theorem 16.7 in \cite{schrijver}) to $S$ we conclude that it contains a perfect matching. Denote  the characteristic vector of this perfect matching by $m'_j$.  Take perfect matchings  $m'_1$ and $m'_2$ of $S$ with $m_1 + m_2 + m_j - m'_j=m_1'+m_2'$ 
(note that $m_1',m_2'$ exist by normality of the polytope $\polytope(\quiver,\theta)$, which in this case can be seen as an imediate consequence of the K\"onig-Hall Theorem). By construction we have $m_1 + m_2 + m_j=m'_1 + m'_2 + m'_j$, $p' \leq m'_1 + m'_2$, and $p'$ has $l+1$ common arrows with $q_1$. 
\end{proof}

\begin{proofofprop}~\ref{prop:theta1}  By Corollary~\ref{cor:joo} it is sufficient to show that 
if $s=m_1+\cdots+m_k=q_1+\cdots+q_k$ where $m_i,q_j\in \polytope(\quiver,\theta)\cap\mz^{\quiver_1}$ and $k\ge 4$, then the $m_i,q_j$ all belong to the same equivalence class with respect to $\sim_s$. 
Note that since $k\ge 4$, the elements $m_1',m_2',m_j'$ from the statement of Lemma~\ref{lemma:nearperfect} belong to the same equivalence class with respect to $\sim_s$ as  $m_1,\dots,m_k$.  
Hence repeatedly applying Lemma~\ref{lemma:nearperfect} we may assume  that there is a near perfect matching $p\le m_1+m_2$ 
such that $p$ and $q_1$ have $n - 1$ common arrows. The only arrow of $q_1$ not belonging to $p$ belongs to some $m_j$, hence after a possible renumbering of 
$m_3,\dots,m_k$ we may assume that $q_1\le m_1+m_2+m_3$. It follows that $q_1\sim_s m_4$, implying in turn that the $m_i,q_j$ all belong to the same quivalence class with respect to $\sim_s$. 
\end{proofofprop}

Now we are in position to state and prove the main result of this section (this was stated in \cite{lenz} as well, but was withdrawn later, see \cite{lenz-withdrawn}):  

\begin{theorem}\label{thm:degreethree}  
Let $\quiver$ be a quiver with no oriented cycles, $\theta\in\mz^{\quiver_1}$ a weight, and $\varphi$ the $\mc$-algebra surjection given in \eqref{eq:varphi}. Then the ideal 
$\ker(\varphi)$ is generated by elements of degree at most $3$. 
\end{theorem}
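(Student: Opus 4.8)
The plan is to invoke Corollary~\ref{cor:joo} with $r=3$. Thus it suffices to show that for every $k\ge 4$ and every $s\in\semigr(\quiver,\theta)_k=\polytope(\quiver,k\theta)\cap\mz^{\quiver_1}$, the generators dividing $s$ — i.e. the lattice points $m\in\polytope(\quiver,\theta)\cap\mz^{\quiver_1}$ with $m\le s$ — form a single equivalence class under $\sim_s$. Concretely, writing $s=m_1+\cdots+m_k=q_1+\cdots+q_k$ as two sums of generators, I want all the $m_i$ and $q_j$ to lie in one $\sim_s$-class. Here one makes immediately the same observation used in the proof of Proposition~\ref{prop:theta1}: since $m_i+m_j\le s$ for every pair $i\neq j$, the definition of $\sim_s$ gives $m_i\sim_s m_j$ at once, so $\{m_1,\dots,m_k\}$ already forms one class, and likewise $\{q_1,\dots,q_k\}$. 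Hence the entire statement collapses to \emph{linking the two classes}: producing a chain of generators, each consecutive pair summing to something $\le s$, that joins some $q_j$ to some $m_i$.

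The heart of the matter is to reduce this linking problem, via the flow structure, to the complete bipartite case already settled in Proposition~\ref{prop:theta1}. Because $\quiver$ has no oriented cycles, every lattice point of $\polytope(\quiver,\theta)$ is a nonnegative integer flow of divergence $\theta$, and any such flow decomposes into $N:=\sum_{\theta(v)>0}\theta(v)$ directed paths running from the net-source vertices to the net-sink vertices. Recording the source-to-sink pairing data attached to such decompositions is exactly the passage to a matching in a complete bipartite graph $K(N,N)$, with $s$ corresponding to a sum of $k$ matchings, i.e. an element of $\semigr(K(N,N),\theta_0)_k$ for the $\pm1$-weight $\theta_0$ of Proposition~\ref{prop:theta1}. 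The plan is to transport the degree-$\le 3$ connectivity of matchings supplied by Proposition~\ref{prop:theta1} (whose engine is Lemma~\ref{lemma:nearperfect}) back to $\sim_s$-connectivity of the generators.

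I would carry out the linking by an analogue of Lemma~\ref{lemma:nearperfect}: starting from $q_1$, modify it one path at a time so as to push it, while remaining inside $\sim_s$, toward a generator that fits below $m_1+m_2$; once a generator $q_1'\sim_s q_1$ with $q_1'\le m_1+m_2$ is reached, it is linked through one of the remaining $m_i$ (using $k\ge 4$ to guarantee the needed room). At each single-path swap one removes a path and must re-complete to a genuine lattice point of $\polytope(\quiver,\theta)$ lying below $s$; the existence of the required integral flow is guaranteed by total unimodularity of the incidence map $\floweqs$ — the very normality exploited throughout Section~\ref{sec:flowpolytopes} — which here plays the role that the K\"onig--Hall theorem plays in the proof of Proposition~\ref{prop:theta1}.

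The hard part will be the compatibility of the two notions of $\sim$. The assignment generator $\mapsto$ boundary matching is far from injective and not even canonical, since a single flow may be decomposed into paths with different source-to-sink pairings, and distinct interior routings of the same pairing must also be reconciled. Thus Proposition~\ref{prop:theta1} controls only the pairing skeleton, and the real work is twofold: showing that generators sharing a pairing are themselves $\sim_s$-linked by rerouting single paths along the residual network (where acyclicity is essential), and verifying that each bipartite move, when lifted to the quiver, keeps the relation within degree $3$ while respecting the multiplicities $s(a)$ and a general — not merely $\pm1$ — weight $\theta$. Should this lifting prove delicate, the fallback is to reprove the swapping lemma directly, replacing K\"onig--Hall by the integral max-flow/circulation theorem, giving a self-contained argument in the exact style of Lemma~\ref{lemma:nearperfect} without an explicit detour through $K(N,N)$.
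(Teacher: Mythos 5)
Your framework is the correct one---invoking Corollary~\ref{cor:joo} with $r=3$, observing that $\{m_1,\dots,m_k\}$ and $\{q_1,\dots,q_k\}$ each form a single $\sim_s$-class automatically (since $m_i+m_j\mid s$ for $i\neq j$), so that only the \emph{linking} of the two classes is at issue---and your endgame (reaching some $q_1'\le m_1+m_2+m_3$ and linking through $m_4$ via $k\ge 4$) correctly mirrors the mechanism of Proposition~\ref{prop:theta1}. The genuine gap is in the proposed transport to $K(N,N)$. Proposition~\ref{prop:theta1} links the divisors of a \emph{single} element of $\semigr(K(N,N),\theta_0)_k$; but your correspondence (flow plus chosen path decomposition) $\mapsto$ (sum of perfect matchings) is multivalued, and the two sides of $m_1+\cdots+m_k=q_1+\cdots+q_k$ produce, in general, \emph{different} elements $S_M\neq S_Q$ of $\semigr(K(N,N),\theta_0)_k$: the total source-to-sink pairing multiplicities are not determined by $s$. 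So Proposition~\ref{prop:theta1} says nothing about connecting the $M_i$ to the $Q_j$. Your two remedial steps do not close this hole: step (a) links generators sharing a common pairing, and step (b) lifts moves among divisors of the fixed element $S_M$; neither crosses from $S_M$ to $S_Q$, and arranging $S_M=S_Q$ by choosing the two path decompositions compatibly is essentially as hard as the original linking problem---no argument for it is offered.

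This crossing problem is exactly what the paper's proof is engineered to solve, by a different mechanism. After reducing to bipartite $\quiver$ (Proposition~\ref{prop:addanarrow} together with the double-quiver construction from the proof of Theorem~\ref{thm:compactification}), it inducts on $\sum_{v\in\quiver_0}(|\theta(v)|-1)$, splitting a single vertex $w$ with $|\theta(w)|\ge 2$ into two vertices of weights $\theta(w)-1$ and $1$ and duplicating the incident arrows. The crux---and the piece with no analogue in your proposal---is the explicit construction, in the split quiver, of a common lift $s'$ of $s$ that is divisible by lifts $m_1'$, $q_1'$ of \emph{both} $m_1$ and $q_1$; this rests on the counting bound $\sum_{a^+=w}\bigl(s(a)-\max\{m_1(a),q_1(a)\}\bigr)\ge k$, which is where the hypotheses $|\theta(w)|\ge 2$ and $k\ge 4$ do their work. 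Induction then applies in the split quiver, and $\sim_{s'}$ is pushed forward along the contraction, since $a\sim_t b$ implies $\pi(a)\sim_{\pi(t)}\pi(b)$. Note also that the base case ($\pm 1$ weights) needs more than Proposition~\ref{prop:theta1}: the paper uses a restriction argument for subquivers of $K(n,n)$ and Proposition~\ref{prop:multiplearrow} to handle multiple arrows---issues your ``lifting of bipartite moves'' would likewise have to confront, because a given pair of boundary vertices may be joined by many distinct routes in $\quiver$. Your fallback (a direct swapping lemma on flows in the style of Lemma~\ref{lemma:nearperfect}, with max-flow integrality replacing K\"onig--Hall) is closer to viable, but it is only named, not carried out, and the delicate part---the analogue of the degree count for flows with multiplicities under a general weight---is precisely what is missing.
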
 

\begin{proof} 
By Proposition~\ref{prop:addanarrow} and the double quiver construction (cf. the proof of Theorem~\ref{thm:compactification}) it is sufficient to deal with the case when $\quiver$ is bipartite and $\polytope(\quiver,\theta)$ is non-empty. 
This implies that $\theta(v)\le 0$ for each source vertex $v$ and $\theta(w)\ge 0$ for each sink vertex $w$. Note that  if $\theta(v)=0$  for some vertex $v\in\quiver_0$, 
then omitting $v$ and the arrows adjacent to $v$ we get a quiver $\quiver'$ such that the lattice polytope $\polytope(\quiver,\theta)$ is integral-affinely equivalent to 
$\polytope(\quiver',\theta\vert_{\quiver'_0})$, hence we may assume that $\theta(v)\neq 0$ for each $v\in\quiver_0$. 
We shall apply induction on $\sum_{v\in\quiver_0}(|\theta(v)|-1)$. 

The induction starts with the case when $\sum_{v\in\quiver_0}(|\theta(v)|-1)=0$, in other words, $\theta(v)=-1$ for each source $v$ and $\theta(w)=1$ for each sink $w$. 
This forces that 
the number of sources equals to the number of sinks in $\quiver$. 
The case when $\quiver$ is the complete bipartite quiver $K(n,n)$ having $n$ sinks and $n$ sources, and each source is connected to each sink by a single arrow is covered by Proposition~\ref{prop:theta1}.  
Suppose next that $\quiver$ is a subquiver of $K(n,n)$ having a relative invariant of weight $\theta$ (i.e. $K(n,n)$ has a perfect matching all of whose arrows belong to 
$\quiver$).  The lattice polytope $\polytope(\quiver,\theta)$ can be identified with a subset of $\polytope(K(n,n),\theta)$: 
think of $m\in\mz^{\quiver_1}$ as $\tilde{m}\in\mz^{K(n,n)_1}$ where $\tilde{m}(a)=0$ for $a\in K(n,n)_1\setminus\quiver_1$ and $\tilde{m}(a)=m(a)$ for $a\in\quiver_1\subseteq K(n,n)_1$. The  surjection $\tilde{\varphi}:\mc[t_m\mid m\in\polytope(K(n,n),\theta)]\to\homcoord(K(n,n),\theta)$ 
restricts to $\varphi:\mc[t_m\mid m\in\polytope(\quiver,\theta)]\to\homcoord(\quiver,\theta)$. Denote by $\pi$ the surjection of polynomial rings that sends to zero 
the variables $t_m$ with $m\notin \polytope(\quiver,\theta)$. Then $\pi$ maps the ideal $\ker(\tilde{\varphi})$ onto $\ker(\varphi)$, consequently 
generators of $\ker(\tilde{\varphi})$ are mapped onto generators of $\ker(\varphi)$. Since we know already that the first ideal is generated by elements of degree at most $3$, the same holds for $\ker(\varphi)$. 
The case when $\quiver$ is an arbitrary bipartite quiver with $n$ sources and $n$ sinks having possibly multiple arrows, and $\theta(v)=-1$ for each source $v$ and 
$\theta(w)=1$ for each sink $w$ follows from the above case by a repeated application of Proposition~\ref{prop:multiplearrow} below. 

Assume next that $\sum_{v\in\quiver_0}(|\theta(v)|-1)\ge 1$, so there exists a vertex $w\in \quiver_0$ with $|\theta(w)|>1$. By symmetry we may assume that $w$ is a sink, so 
$\theta(w)>1$. Construct a new quiver $\quiver'$ as follows: add a new vertex $w'$ to $\quiver_0$, for each arrow $b$ with $b^+=w$ add an extra arrow $b'$ with 
$(b')^+=w'$ and $(b')^-=b^-$, and consider the weight $\theta'$ with $\theta'(w')=1$, $\theta'(w)=\theta(w)-1$, and $\theta'(v)=\theta(v)$ for all other vertices $v$. 
By Corollary~\ref{cor:joo} it is sufficient to show that if 
\[m_1+\dots+m_k=n_1+\dots+n_k=s\in \semigr:=\semigr(\quiver,\theta)\] 
for some $k\ge 4$ and $m_1,\dots,m_k,n_1,\dots,n_k \in \polytope(\quiver,\theta)\cap\mz^{\quiver_1}$, 
then   $m_i\sim_s n_j$ for some (and hence all) $i,j$.  
Set $\semigr':=\semigr(\quiver',\theta')$, and consider the semigroup homomorphism 
$\pi:\semigr'\to\semigr$ given by 
\[\pi(m')(a)=\begin{cases} m'(a)+m'(a')&\quad \mbox{ if } a^+=w;\\m'(a)&\quad \mbox{ if }a^+\neq w.\end{cases}\]
Take an arrow $\alpha$ with $\alpha^+=w$ and $s(\alpha)>0$. After a possible renumbering we may assume that $m_1(\alpha)>0$ and $n_1(\alpha)>0$. 
Define $m_1'\in\mn_0^{\quiver'_1}$ as 
$m_1'(\alpha)=m_1(\alpha)-1$, $m_1'(\alpha')=1$, and $m_1'(a)=m_1(a)$ for all other arrows $a\in\quiver'_1$. 
Similarly define $n_1'\in\mn_0^{\quiver'_1}$ as $n_1'(\alpha)=n_1(\alpha)-1$, $n_1'(\alpha')=1$, and $n_1'(a)=n_1(a)$ for all other arrows $a\in\quiver'_1$. 
Clearly  $\pi(m_1')=m_1$, $\pi(n_1')=n_1$. 
Now we construct $s'\in\semigr'$ with $\pi(s')=s$,   $s'-m_1'\in\mn_0^{\quiver'_1}$ and $s'-n_1'\in\mn_0^{\quiver'_1}$ (thus $m_1'$ and $n_1'$ divide  $s'$ in $\semigr'$). 
Note that $\sum_{a^+=w}s(a)=k\theta(w)$ and $\sum_{a^+=w} \max\{m_1(a),n_1(a)\}<\sum_{a^+=w}(m_1(a)+n_1(a))=2\theta(w)$ (since $m_1(\alpha)>0$ and $n_1(\alpha)>0$). The inequalities $\theta(w)\ge 2$ and $k\ge 3$ imply that $\sum_{a^+=w}(s(a)-\max\{m_1(a),n_1(a)\})\ge k$. Consequently there exist 
non-negative integers $\{t(a)\mid a^+=w \}$ such that $\sum_{a^+=w}t(a)=(\sum_{a^+=w}s(a))-k$, $s(a)\ge t(a)\ge \max\{m_1(a),n_1(a)\}$ for all $a\neq \alpha$ with $a^+=w$, and $s(\alpha)-1\ge t(\alpha)\ge \max\{m_1(\alpha),n_1(\alpha)\}-1$. Consider 
$s'\in \mz^{\quiver'_1}$ given by $s'(a')=s(a)-t(a)$ and $s'(a)=t(a)$ for $a\in \quiver_1$ with $a^+=w$ and $s'(b)=s(b)$ for all other $b\in\quiver'_1$. 
By construction $s'$ has the desired properties, and 
so there exist $m_i',n_j'\in\polytope(\quiver',\theta')$ with $s'=m_1'+\dots+m_k'=n_1'+\dots+n_k'$. 
Since $\sum_{v\in\quiver'_0}(|\theta'(v)|-1)$ is one less than $\sum_{v\in\quiver_0}(|\theta(v)|-1)$, by the induction hypothesis we have 
$m_1'\sim_{s'}n_1'$. It is clear that $a\sim_t b$ implies $\pi(a)\sim_{\pi(t)} \pi(b)$, so we deduce $m_1\sim_s n_1$. As we pointed out before, this shows by 
Corollary~\ref{cor:joo} that $\ker(\varphi)$ is generated by elements of degree at most $3$. 
\end{proof}

The above proof refered to a general recipe to derive a minimal generating system of $\ker(\varphi)$ from a minimal generating system for the quiver obtained by collapsing multiple arrows to a single arrow. Let us consider the following situation: let $\quiver$ be a quiver with no oriented cycles, $\alpha_1,\alpha_2\in\quiver_1$ with $\alpha_1^-=\alpha_2^-$ and $\alpha_1^+=\alpha_2^+$. Denote by $\quiver'$ the quiver obtained from $\quiver$ by collapsing the  $\alpha_i$ to a single arrow $\alpha$. 
Take a weight $\theta\in\mz^{\quiver_0}=\mz^{\quiver'_0}$. 
The map $\pi:\polytope(\quiver,\theta)\to\polytope(\quiver',\theta)$ mapping $m\mapsto m'$ with 
$m'(\alpha)=m(\alpha_1)+m(\alpha_2)$ and $m'(\beta)=m(\beta)$ for all $\beta\in\quiver'_1\setminus\{\alpha\}=\quiver_1\setminus\{\alpha_1,\alpha_2\}$ 
induces a surjection 
from the monoid $\semigr:=\semigr(\quiver,\theta)$  onto the monoid  
$\semigr':=\semigr(\quiver',\theta')$. This extends to a surjection of semigroup algebras $\pi:\mc[\semigr]\to\mc[\semigr']$, which are 
identified with $\homcoord(\quiver,\theta)$ and $\homcoord(\quiver',\theta)$, respectively. Keep the notation $\pi$ for the induced $\mc$-algebra surjection 
 $\homcoord(\quiver,\theta)\to \homcoord(\quiver',\theta)$. We have the commutative diagram of $\mc$-algebra surjections 
 
 \[\begin{array}{ccc}
F=\mc[t_m\mid m\in \polytope(\quiver,\theta)\cap\mz^{\quiver_1}] & \stackrel{\varphi }\longrightarrow & \homcoord(\quiver,\theta) \\
\downarrow{{\scriptstyle{\pi}}} &  &\downarrow{{\scriptstyle{\pi}}} \\
F'=\mc[t_{m'}\mid m'\in \polytope(\quiver',\theta)\cap\mz^{\quiver'_1}] & \stackrel{\varphi'}\longrightarrow & \homcoord(\quiver',\theta)
\end{array}\]
where the left vertical map (denoted also by $\pi$) sends the variable $t_m$ to $t_{\pi(m)}$.  
For any monomial $u \in F'$ and any $s\in \semigr$ with $\pi(x^s)=\varphi'(u)\in\semigr'$ we choose a monomial 
$\psi_s(u)\in F$  such that $\pi(\psi_s(u))=u$ and $\varphi(\psi_s(u))=x^s$. This is clearly possible: let $u=t_{m_1}\dots t_{m_r}$, then 
we take for $\psi_s(u)$ an element $t_{n_1}\dots t_{n_r}$ where $\pi(n_j)=m_j$, such that $(n_1+\dots+n_r)(\alpha_1)=s(\alpha_1)$.  
Denote by $\varepsilon_i\in\mn_0^{\quiver_1}$ the characteristic function of $\alpha_i\in\quiver_1$ $(i=1,2)$. 

\begin{proposition}\label{prop:multiplearrow} 
Let $u_{\lambda}-v_{\lambda}$ $(\lambda\in\Lambda)$ be a set of binomial relations generating the ideal $\ker(\varphi')$. 
Then $\ker(\varphi)$ is generated by $\mathcal{G}_1\bigcup\mathcal{G}_2$, where 
\begin{eqnarray*}\mathcal{G}_1&:=& \{\psi_s(u_{\lambda})-\psi_s(v_{\lambda})\mid \lambda\in\Lambda, \pi(s)=\varphi'(u_{\lambda})\}  \\ 
\mathcal{G}_2&:=& 
 \{t_mt_n-t_{m+\varepsilon_2-\varepsilon_1}t_{n+\varepsilon_1-\varepsilon_2}\mid m,n\in\polytope(\quiver,\theta)\cap\mz^{\quiver_1}, m(\alpha_1)>0, n(\alpha_2)>0\}.
 \end{eqnarray*}
\end{proposition}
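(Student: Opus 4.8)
The plan is to set $J:=(\mathcal{G}_1\cup\mathcal{G}_2)$ for the ideal generated by these binomials and to prove the two inclusions $J\subseteq\ker(\varphi)$ and $\ker(\varphi)\subseteq J$ separately. The first is a direct check. For a generator of $\mathcal{G}_2$ one has $\varphi(t_mt_n)=x^{m+n}=\varphi(t_{m+\varepsilon_2-\varepsilon_1}t_{n+\varepsilon_1-\varepsilon_2})$, because $\alpha_1,\alpha_2$ share both endpoints, so $m+\varepsilon_2-\varepsilon_1$ and $n+\varepsilon_1-\varepsilon_2$ again lie in $\polytope(\quiver,\theta)\cap\mz^{\quiver_1}$ (the assumptions $m(\alpha_1)>0$ and $n(\alpha_2)>0$ guarantee non-negativity, and the flow equations are unaffected). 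For a generator of $\mathcal{G}_1$ one has $\varphi(\psi_s(u_\lambda))=x^s=\varphi(\psi_s(v_\lambda))$ by the defining property of $\psi_s$, where $\psi_s(v_\lambda)$ is defined because $\varphi'(u_\lambda)=\varphi'(v_\lambda)$.

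The heart of the proof is the reverse inclusion. Since $\ker(\varphi)$ is spanned by binomials $t^a-t^b$ with $\varphi(t^a)=\varphi(t^b)$ (cf.\ \eqref{eq:binomspan}), it suffices to show each such binomial lies in $J$; put $s:=\varphi(t^a)=\varphi(t^b)\in\semigr$. The key observation is that $\mathcal{G}_2$ governs exactly the redistribution of the $\alpha_1/\alpha_2$ split among the factors of a monomial: for a monomial $T=t_{n_1}\cdots t_{n_r}$ both the $\pi$-image of each factor (the ``type'') and the total $\alpha_1$-content $\sum_i n_i(\alpha_1)=\varphi(T)(\alpha_1)$ are invariants of the $\varphi$-fibre, while one $\mathcal{G}_2$-move transfers a single unit of $\alpha_1$-content from one factor to another. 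I would first establish a \emph{connectivity lemma}: any two monomials $T,T'$ of $F$ with $\pi(T)=\pi(T')$ and $\varphi(T)=\varphi(T')$ satisfy $T-T'\in(\mathcal{G}_2)$. This reduces to connectivity of the lattice points of the box-constrained simplex $\{(k_i)\mid 0\le k_i\le c_i,\ \sum_i k_i=c\}$ under the unit exchanges $k_i\mapsto k_i-1,\ k_j\mapsto k_j+1$, a standard exchange argument (decrease $\sum_i|k_i-k_i'|$), each exchange being precisely a translate of a $\mathcal{G}_2$-generator with all intermediate factors genuine lattice points. In particular $t^a\equiv\psi_s(\pi(t^a))$ and $t^b\equiv\psi_s(\pi(t^b))\pmod{(\mathcal{G}_2)}$, so it remains to prove $\psi_s(U)-\psi_s(U')\in J$ where $U:=\pi(t^a)$ and $U':=\pi(t^b)$.

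Now $U-U'\in\ker(\varphi')$, since $\varphi'(\pi(t^a))=\pi(\varphi(t^a))=\pi(x^s)=\varphi'(\pi(t^b))$. Because the $u_\lambda-v_\lambda$ generate $\ker(\varphi')$, the monomials $U$ and $U'$ are connected by a sequence of elementary moves $W=w\,u_\lambda\leftrightarrow w\,v_\lambda=W'$ inside one $\varphi'$-fibre (the standard fact that a binomial generating set of a toric ideal connects each fibre). By telescoping it suffices to treat a single such move and show $\psi_s(W)-\psi_s(W')\in J$. Here I would factor $\psi_s(W)=A\cdot B$ with $\pi(A)=w$ and $\pi(B)=u_\lambda$, set $s_B:=\varphi(B)$ (so $\pi(s_B)=\varphi'(u_\lambda)$, making $\psi_{s_B}(u_\lambda)-\psi_{s_B}(v_\lambda)\in\mathcal{G}_1$). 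The connectivity lemma gives $B\equiv\psi_{s_B}(u_\lambda)\pmod{(\mathcal{G}_2)}$, hence $\psi_s(W)\equiv A\,\psi_{s_B}(u_\lambda)$; the $\mathcal{G}_1$-relation gives $A\,\psi_{s_B}(u_\lambda)\equiv A\,\psi_{s_B}(v_\lambda)\pmod{J}$; and since $A\,\psi_{s_B}(v_\lambda)$ is again a lift of $W'$ with $\varphi$-image $x^s$, the connectivity lemma yields $A\,\psi_{s_B}(v_\lambda)\equiv\psi_s(W')\pmod{(\mathcal{G}_2)}$. Chaining these congruences gives $\psi_s(W)-\psi_s(W')\in J$, which completes the argument.

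The main obstacle I anticipate is the bookkeeping in the final paragraph: one must choose the factorization $\psi_s(W)=A\,B$ compatibly with the move (possible because the multiset of $\pi$-images of the factors of $\psi_s(W)$ equals the variables of $w$ together with those of $u_\lambda$) and verify that the $\alpha_1$-contents add correctly, i.e.\ $s_A+s_B=s$ for $s_A:=\varphi(A)$, so that after applying the $\mathcal{G}_1$-relation the monomial $A\,\psi_{s_B}(v_\lambda)$ is still a lift of $W'$ with total content $s(\alpha_1)$, permitting the second appeal to the connectivity lemma. The connectivity lemma itself and the fibre-connectivity for $\ker(\varphi')$ are routine, but must be phrased so that every intermediate monomial remains an honest product of the variables $t_m$, $t_{m'}$ indexed by genuine lattice points of $\polytope(\quiver,\theta)$ and $\polytope(\quiver',\theta)$.
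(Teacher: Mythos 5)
Your proposal is correct and takes essentially the same route as the paper: your ``connectivity lemma'' is exactly the paper's Lemma~\ref{lemma:quadratic} (proved there by the same kind of exchange induction on the $\alpha_1$-content), and your move-by-move lifting of a telescoping decomposition of $\pi(t^a)-\pi(t^b)$ into generator moves, patched at the seams by that lemma, is precisely the paper's argument with the lifted summands $y_i-z_i:=\psi_{s-s_i}(w_i)\bigl(\psi_{s_i}(u_i)-\psi_{s_i}(v_i)\bigr)$.
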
 

\begin{proof} Clearly $\mathcal{G}_1$ and $\mathcal{G}_2$ are contained in $\ker(\varphi)$. Denote by $I$ the ideal generated by them in $F$, so $I\subseteq \ker(\varphi)$.  
In order to show the reverse inclusion, take any binomial relation $u-v\in\ker(\varphi)$, then $\varphi(u)=\varphi(v)=x^s$ for some $s\in\semigr$. It follows that $\pi(u)-\pi(v)\in\ker(\varphi')$, whence there exist monomials $w_i$ such that $\pi(u)-\pi(v)=\sum_{i=1}^kw_i(u_i-v_i)$, where 
$u_i-v_i\in \{u_{\lambda}-v_{\lambda}, \quad v_{\lambda}-u_{\lambda}\mid \lambda\in\Lambda\}$, 
$w_1u_1=\pi(u)$, $w_iv_i=w_{i+1}u_{i+1}$ for $i=1,\dots,k-1$ and $w_kv_k=\pi(v)$. 
Moreover, for each $i$ choose a divisor $s_i\mid s$ such that $\pi(x^{s_i})=\varphi'(u_i)$ (this is clearly possible). 
Then $I$ contains the element 
$\sum_{i=1}^k\psi_{s-s_i}(w_i)(\psi_{s_i}(u_i)-\psi_{s_i}(v_i))$, whose $i$th summand we shall denote by $y_i-z_i$ for notational simplicity. 
Then we have that $\pi(y_1)=\pi(u)$, $\pi(z_k)=\pi(v)$,  $\pi(z_i)=\pi(y_{i+1})$ for $i=1,\dots,k-1$, and $x^s=\varphi(y_i)=\varphi(z_i)$. 
It follows by Lemma~\ref{lemma:quadratic} below $u-y_1$, $v-z_k$,  and $y_{i+1}-z_i$ for $i=1,\dots,k-1$  are all contained in the ideal $J$ generated by $\mathcal{G}_2$. 
Whence $u-v$ is contained in $I$. 
\end{proof} 

\begin{lemma}\label{lemma:quadratic} Suppose that for monomials $u,v\in F$ 
we have $\varphi(u)=\varphi(v)\in \homcoord(\quiver,\theta)$ and $\pi(u)=\pi(v)\in F'$. 
Then $u-v$ is contained in the ideal $J$ generated by $\mathcal{G}_2$ (with the notation of Proposition~\ref{prop:multiplearrow}). 
\end{lemma}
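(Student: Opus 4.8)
The plan is to reduce the statement to a purely combinatorial reachability problem about moving units of flow between the two parallel arrows $\alpha_1$ and $\alpha_2$, and then to settle that problem by an elementary transportation argument.

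First I would record what the two hypotheses say about the monomials $u=t_{m_1}\cdots t_{m_r}$ and $v=t_{n_1}\cdots t_{n_r}$. From $\varphi(u)=\varphi(v)$ we get $\sum_i m_i=\sum_j n_j=:s$ in $\mz^{\quiver_1}$; in particular $r$ is the common degree, and the total multiplicities of $\alpha_1$ and of $\alpha_2$ agree on the two sides. From $\pi(u)=\pi(v)$, comparing monomials in $F'$, the multisets $\{\pi(m_1),\dots,\pi(m_r)\}$ and $\{\pi(n_1),\dots,\pi(n_r)\}$ coincide, so after reindexing I may assume $\pi(m_i)=\pi(n_i)$ for every $i$. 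Thus $m_i$ and $n_i$ agree on every arrow outside $\{\alpha_1,\alpha_2\}$, and $m_i(\alpha_1)+m_i(\alpha_2)=n_i(\alpha_1)+n_i(\alpha_2)=:c_i$. Writing $p_i:=m_i(\alpha_1)$ and $q_i:=n_i(\alpha_1)$, each factor of $u$ (resp.\ $v$) is completely determined by $p_i$ (resp.\ $q_i$) together with the common datum $\pi(m_i)$, and we have $0\le p_i,q_i\le c_i$ as well as $\sum_i p_i=s(\alpha_1)=\sum_i q_i$.

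Next I would interpret the generators of $J$ as elementary moves on the vector $(p_1,\dots,p_r)$. Applying the relation in $\mathcal{G}_2$ to two factors $t_{m_i},t_{m_j}$ of a monomial replaces $m_i$ by $m_i+\varepsilon_2-\varepsilon_1$ and $m_j$ by $m_j+\varepsilon_1-\varepsilon_2$, i.e.\ it performs $p_i\mapsto p_i-1$ and $p_j\mapsto p_j+1$; the admissibility conditions $m_i(\alpha_1)>0$ and $m_j(\alpha_2)>0$ read $p_i>0$ and $p_j<c_j$. The crucial point to verify is that the modified exponent vectors are again lattice points of $\polytope(\quiver,\theta)$, so that the corresponding variables actually exist in $F$: nonnegativity is clear from $p_i>0$ and $p_j<c_j$, and the flow equations are preserved because $\alpha_1$ and $\alpha_2$ share both endpoints, whence $\floweqs(\varepsilon_1)=\floweqs(\varepsilon_2)$ and $\floweqs(m_i+\varepsilon_2-\varepsilon_1)=\floweqs(m_i)=\theta$. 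Consequently each such move transforms one monomial of $F$ into another differing from it by an element of $\mathcal{G}_2$ times the product of the untouched variables, hence by an element of $J$.

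It then remains to show that $(p_1,\dots,p_r)$ can be carried to $(q_1,\dots,q_r)$ by a sequence of these admissible unit transfers; telescoping the intermediate differences will then give $u-v\in J$. This is a standard transportation argument by induction on $\sum_i|p_i-q_i|$: if this quantity is nonzero then, since $\sum_i p_i=\sum_i q_i$, there exist an index $i$ with $p_i>q_i$ (so $p_i>0$) and an index $j$ with $p_j<q_j$ (so $p_j<c_j$), and transferring one unit from $i$ to $j$ is admissible and strictly decreases $\sum_i|p_i-q_i|$. I expect the only genuinely delicate point to be the verification that every intermediate exponent vector stays inside $\polytope(\quiver,\theta)$, so that the whole telescoping takes place among honest variables of $F$; everything else is either bookkeeping or the elementary reachability just described.
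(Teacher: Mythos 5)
Your proposal is correct and follows essentially the same route as the paper: both proofs use the generators in $\mathcal{G}_2$ as admissible unit transfers of $\alpha_1/\alpha_2$-flow between factors of a monomial, and both rest on the same two observations (factors of $u$ and $v$ can be matched via $\pi$, and the transferred exponent vectors stay in $\polytope(\quiver,\theta)\cap\mz^{\quiver_1}$ because $\floweqs(\varepsilon_1)=\floweqs(\varepsilon_2)$). The only difference is bookkeeping: you match all factors at the outset and run a single transportation induction on $\sum_i|p_i-q_i|$, whereas the paper inducts on the degree, aligning one matched pair of factors at a time by an inner induction on $m_1(\alpha_1)-m_2(\alpha_1)$ and cancelling common variables.
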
 

\begin{proof} If $u$ and $v$ have a common variable $t$, then $u-v=t(u'-v')$, and $u',v'$ satisfy the conditions of the lemma. 
By induction on the degree we may assume that $u'-v'$ belongs to the ideal $J$. 
Take $m_1\in\polytope(\quiver,\theta)\cap\mz^{\quiver_1}$ such that $t_{m_1}$ is a variable occurring in $u$. 
There exists an $m_2\in\polytope(\quiver,\theta)\cap\mz^{\quiver_1}$ such that $t_{m_2}$ occurs in $v$, and $\pi(m_1)=\pi(m_2)$. 
By symmetry we may assume that $m_1(\alpha_1)\ge m_2(\alpha_1)$, and apply induction on the non-negative difference 
$m_1(\alpha_1)-m_2(\alpha_1)$. If $m_1(\alpha_1)-m_2(\alpha_1)=0$, then $m_1=m_2$ and we are done by the above considerations. 
Suppose next that $m_1(\alpha_1)-m_2(\alpha_1)>0$. 
By $\pi(m_1)=\pi(m_2)$ we have $m_2(\alpha_2)>0$, and 
the condition $\varphi(u)=\varphi(v)$ implies that 
there exists an 
$m_3\in \polytope(\quiver,\theta)\cap\mz^{\quiver_1}$ such that $t_{m_2}t_{m_3}$ divides $v$, and $m_3(\alpha_1)>0$. 
Denote  by $\varepsilon_i\in\mn_0^{\quiver_1}$ the characteristic function of $\alpha_i$, and set 
$m_2':=m_2+\varepsilon_1-\varepsilon_2$, $m_3':=m_3-\varepsilon_1+\varepsilon_2$. Clearly 
$m_2',m_3'\in\polytope(\quiver,\theta)\cap\mz^{\quiver_1}$ and $t_{m_2}t_{m_3}-t_{m_2'}t_{m_3'}\in J$. So modulo $J$ we may replace 
$v$ by $t_{m_2'}t_{m_3'}v'$ where $v=t_{m_2}t_{m_3}v'$. 
Clearly $0\le m_1(\alpha_1)-m_2'(\alpha_1)<m_1(\alpha_1)-m_2(\alpha_1)$, and by induction  we are finished.  
\end{proof} 

In the affine case one can also introduce a grading on  $\coord(\moduli(\quiver,0))$ by declaring the elements that correspond to primitve cycles of the quiver to be of degree $1$. The ideal of relations can be defined as above, but in this case it is not possible to give a degree bound independently of the dimension. 
This is illustrated by Example~\ref{example:affinedegree}, providing an instance where a degree  $d-1$ element is needed to generate the ideal of relations of a $d$-dimensional affine toric quiver variety. However the following theorem shows that this example is the worst possible from this respect.

\begin{theorem}\label{thm:affinebound} 
Let $\quiver$ be a quiver such that $d := \dim(\moduli(\quiver,0)) >0$. Then the ideal of relations of $\moduli(\quiver,0)$ is generated by elements of degree at most $d-1$.
\end{theorem}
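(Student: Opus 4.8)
The plan is to realize the coordinate ring as a quotient $\varphi\colon F=\mc[t_C\mid C\text{ a primitive cycle of }\quiver]\to\coord(\moduli(\quiver,0))=\mc[\semigr(\quiver,0)]$, where the characteristic vectors $\varepsilon_C$ of the primitive cycles form a Hilbert basis of $\semigr(\quiver,0)=\polytope(\quiver,0)\cap\mz^{\quiver_1}$, and to bound the degrees of a minimal binomial generating system of $\ker(\varphi)$ when $\deg t_C=1$. The cycle-grading is \emph{not} a grading of the monoid (this is exactly why Example~\ref{example:affinedegree} can be extremal), so Corollary~\ref{cor:joo} does not apply verbatim. Instead I would grade $F$ and $\semigr(\quiver,0)$ by the total arrow number $m\mapsto\sum_{a\in\quiver_1}m(a)$, which \emph{is} a grading with $\semigr_0=\{0\}$. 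Lemma~\ref{lemma:mingen} then produces a minimal generating system of binomials $t^p-t^q$ with $\sum_i p_i\varepsilon_{C_i}=\sum_j q_j\varepsilon_{C_j}=s$, and by Lemma~\ref{lemma:joo} such a binomial is a genuine (minimal) generator only when the cycles occurring in $p$ and those occurring in $q$ lie in different $\sim_s$-classes. It therefore suffices to bound, for every $s$ admitting at least two $\sim_s$-classes among its divisors, the number of primitive cycles appearing in any factorization of $s$.

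The combinatorial heart is the following counting lemma: \emph{if the primitive cycles dividing $s$ fall into at least two $\sim_s$-classes, then every factorization $s=\varepsilon_{C_1}+\dots+\varepsilon_{C_k}$ into primitive cycles has $k\le|\quiver_0|$.} To prove it I would first note that any two summands satisfy $\varepsilon_{C_i}+\varepsilon_{C_j}\le s$, whence $C_i\sim_s C_j$; thus all summands lie in one class $\mathcal{A}$. By hypothesis there is a primitive cycle $C'\le s$ in a different class. For each $i$ the relation $C'\not\sim_s C_i$ forces $\varepsilon_{C'}+\varepsilon_{C_i}\not\le s$, so $C'$ and $C_i$ share an arrow $e_i$ with $s(e_i)=1$. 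Since an arrow $e_i$ with $s(e_i)=1$ lies in a unique summand, namely $C_i$, the assignment $i\mapsto e_i$ is injective; and every $e_i$ lies on $C'$. Hence $k\le|\support(C')|$, which equals the length of the simple cycle $C'$ and is at most $|\quiver_0|$.

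To turn the bound $|\quiver_0|$ into $\dim-1$ I would first replace $(\quiver,0)$ by a tight pair via Proposition~\ref{prop:tightsufficient} (successive removals and contractions of arrows), and then decompose the underlying graph into maximal prime full subquivers by Theorem~\ref{thm:directprod}; by Proposition~\ref{prop:glueing} this writes $\coord(\moduli(\quiver,0))$ as a tensor product over prime tight components, so that $\ker(\varphi)$ is generated by the relation ideals of the components and $\dim$ is additive. These reductions are induced by monoid isomorphisms of $\semigr(\quiver,0)$ carrying the Hilbert basis onto the Hilbert basis, hence they preserve $\ker(\varphi)$ together with its cycle-grading. For a prime tight component with $\chi\ge2$, Proposition~\ref{prop:valencyfour}(ii) gives $|\quiver_0|\le\chi-1=\dim-1$, whereas a component with $\chi=1$ has free monoid and contributes no relations. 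The counting lemma then bounds every minimal generator $t^p-t^q$ by $\max(|p|,|q|)\le|\quiver_0|\le\dim-1$, which is the assertion. The main obstacle is precisely the failure of the cycle-grading to be an honest grading: one must argue through the arrow-number grading to extract a bona fide minimal generating system before the counting lemma can be invoked, and one must verify that the passage to a prime tight quiver preserves the cycle-degrees of the relations, rather than merely the isomorphism type of the variety.
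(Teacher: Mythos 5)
Your proposal is correct, but at the decisive step it takes a genuinely different route from the paper. Both arguments share the same outer frame: reduce to a tight, prime pair $(\quiver,0)$, note that the presentation is by the primitive cycles (the Hilbert basis of $\semigr(\quiver,0)$), and convert the vertex bound $|\quiver_0|\le\chi(\quiver)-1=d-1$ of Proposition~\ref{prop:valencyfour}(ii) into the degree bound. They differ in how the degree of a needed generator is bounded by $|\quiver_0|$. The paper simply cites Section 6 of \cite{joo}: a generator of degree $k$ is needed if and only if some sum $\varepsilon_{c_1}+\varepsilon_{c_2}$ of two primitive cycles can be rewritten as the sum of $k$ other primitive cycles $e_1,\dots,e_k$; since each $e_i$ must then contain an arrow of $c_1$ not in $c_2$ and an arrow of $c_2$ not in $c_1$, and such arrows have multiplicity one in the sum, one gets $2k\le\mathrm{length}(c_1)+\mathrm{length}(c_2)\le 2|\quiver_0|$. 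You avoid this external structural result entirely: you invoke the paper's own Lemma~\ref{lemma:mingen} and Lemma~\ref{lemma:joo} (correctly observing that Corollary~\ref{cor:joo} does not apply verbatim because the cycle count is not a grading of the monoid, and substituting the total-arrow grading, for which $\semigr_0=\{0\}$ does hold), and then prove that whenever the cycle divisors of $s$ fall into at least two $\sim_s$-classes, \emph{every} factorization of $s$ has at most $|\quiver_0|$ summands, via the injection sending each summand $C_i$ to an arrow of $s$-multiplicity one shared with a cycle $C'$ from another class, all such arrows lying on $C'$. This is the same multiplicity-one counting as in the paper, but applied to arbitrary factorizations rather than to the special two-cycle form, which is precisely why the citation of \cite{joo} becomes unnecessary. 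The paper's route buys brevity; yours buys self-containedness (everything follows from Section~\ref{sec:semigroupalgebras}) and a slightly stronger intermediate statement. The one point you assert rather than prove --- that removal, contraction and the prime decomposition induce monoid isomorphisms matching Hilbert bases, hence preserve the presentation ideal and its cycle-degrees --- is genuine but immediate: an integral-affine equivalence between the cones $\polytope(\quiver,0)$ must fix the apex, hence is linear, so it restricts to an isomorphism of the monoids of lattice points, and Hilbert bases are intrinsic; the paper disposes of the same reduction with ``clearly''.
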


\begin {proof} Up to dimension $2$ the only affine toric quiver varieties are the affine spaces. Suppose from now on that $d\ge 3$. 
Clearly it is sufficient to deal with the case when $(\quiver,0)$ is tight and $\quiver$ is prime. 
Suppose that a degree $k$ element is needed  to generate the ideal of relations of $\moduli(\quiver,0)$. 
In Section 6 of \cite{joo} it is shown that this holds  if and only if there is a pair of primitive cycles $c_1$, $c_2$ in $\quiver$ such that the multiset sum of their arrows can also be obtained as the multiset sum of some other $k$ primitive cycles $e_1,\dots,e_k$. 
Note that each $e_i$ has an arrow contained in $c_1$ but not in $c_2$, and has an arrow contained in $c_2$ but not in $c_1$.  It follows that 
$\mathrm{length}(c_1)+\mathrm{length}(c_2)\ge 2k$, implying that $\quiver$ has 
at least $k$ vertices.  By  Proposition~\ref{prop:valencyfour} (ii) we conclude that $d-1=\chi(\quiver)-1\geq |\quiver_0|\ge k$. 
 \end{proof}
 
 
 \section{The general case in \cite{yamaguchi-ogawa-takemura}}\label{sec:japanok}

\def\osmpoly{OSM(\quiver)} 

In this section we give a short derivation of the main result of 
 \cite{yamaguchi-ogawa-takemura} from the special case Proposition~\ref{prop:theta1}.
To reformulate the result in our context consider a bipartite quiver $\quiver$ with at least as many sinks as sources. By a \textit{one-sided matching} of $\quiver$ we mean an 
arrow set which has exactly one arrow incident to each source, and at most one arrow incident to each sink.  By abuse of language the characteristic vector in $\mz^{\quiver_1}$ of a one-sided matching will also be called a one-sided matching. The convex hull of the one-sided matchings in $\mz^{\quiver_1}$ is a lattice polytope in $\mr^{\quiver_1}$ which we will denote by $\osmpoly$. Clearly the lattice points of $\osmpoly$ are precisely the one-sided matchings. The normality of $\osmpoly$ is explained in section 4.2 of \cite{yamaguchi-ogawa-takemura} or it can be directly shown using the K\"onig-Hall Theorem for regular graphs and an argument similar to that in the proof below. 
Denote by $\semigr(\osmpoly)$ the submonoid of $\mn_0^{\quiver_1}$ generated by $\osmpoly\cap\mz^{\quiver_1}$. This is graded, the generators have degree $1$. 
Consider  the ideal of relations among the generators $\{x^m\mid m\in\osmpoly\cap\mz^{\quiver_1}\}$ of the semigroup algebra $\mc[\semigr(\osmpoly)]$. Theorem 2.1 from  \cite{yamaguchi-ogawa-takemura} can be stated as follows:

\begin{theorem}
The ideal of relations of $\mc[\semigr(\osmpoly)]$ is generated by binomials of degree at most 3.
\end{theorem}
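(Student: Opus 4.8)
The plan is to realize $\osmpoly$ (up to integral-affine equivalence) as a genuine quiver polytope $\polytope(\quiver'',\theta'')$ with $\quiver''$ acyclic, and then to quote Theorem~\ref{thm:degreethree} — whose proof rests on Proposition~\ref{prop:theta1} — to obtain the degree-$3$ bound. In this way the whole problem reduces to producing such a realization, i.e. to identifying the one-sided matching monoid $\semigr(\osmpoly)$ with some $\semigr(\quiver'',\theta'')$ as graded monoids.

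First I would model the one-sided matchings as the lattice points of a bounded flow polytope. Write $v_1,\dots,v_s$ for the sources and $w_1,\dots,w_t$ for the sinks of $\quiver$, with $t\ge s$. Form the acyclic quiver $\quiver'$ by adjoining a single new vertex $z$ together with an arrow $e_j\colon w_j\to z$ for every sink, keeping all arrows of $\quiver$; put $\theta'(v_i)=-1$, $\theta'(w_j)=0$, $\theta'(z)=s$, and let $\mathbf{u}$ be $1$ on every $e_j$ (and large on the original arrows). The balance condition at $v_i$ forces the total outflow of each source to be $1$, so each original arrow carries value $0$ or $1$ and every source emits exactly one unit; the balance condition at $w_j$ forces its incoming flow to equal $x(e_j)\in\{0,1\}$, so at most one arrow enters each sink. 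Hence the restriction of a lattice point of $\polytope(\quiver',\theta',\mathbf{0},\mathbf{u})$ to the coordinates indexed by $\quiver_1$ is exactly a one-sided matching, and conversely every one-sided matching extends uniquely (set $x(e_j)=1$ iff $w_j$ is covered). Since the $e_j$-coordinates are affine-linear functions of the $\quiver_1$-coordinates, this projection is an integral-affine equivalence $\polytope(\quiver',\theta',\mathbf{0},\mathbf{u})\to\osmpoly$.

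Next I would invoke Proposition~\ref{prop:flowpolytope-quiverpolytope} to replace the bounded flow polytope $\polytope(\quiver',\theta',\mathbf{0},\mathbf{u})$ by an integral-affinely equivalent quiver polytope $\polytope(\quiver'',\theta'')$ with $\quiver''$ having no oriented cycles. Composing the two equivalences identifies $\osmpoly$ with $\polytope(\quiver'',\theta'')$, which is normal by Proposition~\ref{prop:normal}; this identification carries the degree-$1$ generators $\{x^m\mid m\in\osmpoly\cap\mz^{\quiver_1}\}$ to the generators of $\homcoord(\quiver'',\theta'')$ and is graded, so it induces an isomorphism of semigroup algebras $\mc[\semigr(\osmpoly)]\cong\homcoord(\quiver'',\theta'')$ carrying the ideal of relations of $\mc[\semigr(\osmpoly)]$ to the ideal of relations of $\homcoord(\quiver'',\theta'')$. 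Theorem~\ref{thm:degreethree} then gives generation in degree at most $3$, and the statement follows. Note that any multiple arrows of $\quiver$ are simply retained in $\quiver'$, so no separate treatment of parallel arrows (via Proposition~\ref{prop:multiplearrow}) is needed.

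The main obstacle lies entirely in this reduction step, namely in choosing the gadget so that the constraint ``at most one arrow per sink'' is captured exactly. The explicit upper bound $\mathbf{u}(e_j)=1$ is essential here: without it the unbounded polytope $\polytope(\quiver',\theta')$ would also contain flows in which two sources feed the same sink and $x(e_j)=2$, which are not one-sided matchings, so the bound — and hence the passage through Proposition~\ref{prop:flowpolytope-quiverpolytope} — cannot be dispensed with. The one point requiring genuine (if routine) verification is that the projection forgetting the $e_j$ is not merely a bijection on lattice points but a bona fide integral-affine equivalence respecting the grading, so that the degree-$k$ graded pieces — and therefore the minimal generators of the relation ideal — transport faithfully. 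Once that is checked, all the substantive work is hidden inside Theorem~\ref{thm:degreethree} and Proposition~\ref{prop:theta1}.
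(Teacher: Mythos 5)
Your proof is correct, but it takes a genuinely different route from the paper's. You realize $\osmpoly$ up to integral-affine equivalence as a capacitated flow polytope (collector vertex $z$, capacity-$1$ arrows $e_j$), convert it to a quiver polytope by Proposition~\ref{prop:flowpolytope-quiverpolytope}, and then quote Theorem~\ref{thm:degreethree}. The paper goes the other way around: it enlarges $\quiver$ by new \emph{sources} (enough to equalize the numbers of sources and sinks, each new source joined to every sink) to get a bipartite quiver $\quiver'$ with the $\pm 1$ weight, and argues entirely inside the combinatorial criterion of Corollary~\ref{cor:joo}: given $k\ge 4$, a degree-$k$ element $s\in\semigr(\osmpoly)$ and divisors $m,n$, it constructs explicit lifts $s',m',n'$ with $m'\le s'$, $n'\le s'$ over the coordinate projection $\pi:\mr^{\quiver'_1}\to\mr^{\quiver_1}$, obtains $m'\sim_{s'}n'$ from Proposition~\ref{prop:theta1}, and pushes the relation down using that $a\sim_t b$ implies $\pi(a)\sim_{\pi(t)}\pi(b)$. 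The difference matters for the paper's stated purpose: Section~\ref{sec:japanok} is meant to show that the general Yamaguchi--Ogawa--Takemura theorem follows from the special case Proposition~\ref{prop:theta1} alone, whereas you invoke the full strength of Theorem~\ref{thm:degreethree}; this is legitimate (Theorem~\ref{thm:degreethree} is proved in Section~\ref{sec:equations} without any reference to Section~\ref{sec:japanok}, so there is no circularity), and it buys you a shorter, more modular argument. The price is exactly the two verifications you flag, which are routine but not omittable: (a) the image of the capacitated flow polytope under the projection forgetting the $e_j$ equals $\osmpoly$ --- a bijection on lattice points alone is not enough, since a priori the flow polytope could have non-integral vertices mapping outside the convex hull of the matchings; this is where the generalized Birkhoff--von Neumann theorem (Theorem 13.11 in \cite{schrijver}, already cited in Section~\ref{sec:flowpolytopes}) enters, guaranteeing the flow polytope is a lattice polytope and hence the convex hull of its lattice points; and (b) an integral-affine equivalence of lattice polytopes induces a graded isomorphism of the semigroup algebras generated by their lattice points (well defined because the equivalence is affine and both gradings are generated in degree one), hence identifies the two relation ideals variable-by-variable. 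With (a) and (b) spelled out, your reduction is complete.
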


\begin{proof}
Consider a quiver $\quiver'$ that we obtain by adding enough new sources to $Q$ so that it has the same number of sources and sinks, and adding an arrow from each new source to every sink. Let $\theta$ be the weight of $Q'$ that is $-1$ on each source and $1$ on each sink. Now the natural projection $\pi : \mr^{Q'_1} \rightarrow \mr^{Q_1}$ induces a surjective map from $\polytope(\quiver',\theta)\cap\mz^{Q'_1}$ onto $\osmpoly\cap\mz^{Q_1}$ giving us a degree preserving surjection between the corresponding semigroup algebras. 
By Corollary~\ref{cor:joo} 
it is sufficient to prove that for any $k \geq 4$, any degree $k$ element $s\in \semigr(\osmpoly)$, and any $m,n \in \osmpoly \cap\mz^{\quiver_1}$ with 
$m,n$ dividing $s$ we have $m\sim_s n$.  In order to show this we shall construct  an $s' \in \polytope(\quiver',k\theta)\cap\mz^{\quiver'_1}$ and $m' , n' \in  \polytope(\quiver',\theta)\cap\mz^{\quiver'_1}$ such that $m'\le s'$, $n' \leq s'$, $\pi(m') = m$, $\pi(n') = n$ and $\pi(s') = s$. By Proposition~\ref{prop:theta1} we have $m' \sim_{s'} n'$, hence the surjection $\pi$ yields $m \sim_s n$. The desired  $s'$, $m'$, $n'$ can be obtained as follows: think of $s$ as the multiset of arrows from $\quiver$, where the multiplicity of an arrow  $a$ is $s(a)$. Pairing off  the   new sources $\quiver'_0\setminus \quiver_0$ with the sinks in $\quiver$ not covered by $m$ and adding the corresponding arrows to $m$ we get a perfect matching $m'$ of $\quiver'$ with $\pi(m')=m$. Next do the same for $n$, with the extra condition that if none of $n$ and $m$ covers a sink in $\quiver$, then in $n'$ it is connected with the same new source as in $m'$. 
Let $t\in\mn_0^{\quiver'_1}$ be the multiset of arrows obtained  from $s$ by adding once each of the arrows $\quiver'_1\setminus \quiver_1$ occuring in $m'$ or $n'$. For a vertex $v\in\quiver'_1$ set 
$\deg_t(v):=\sum_{v\in\{c^-,c^+\}} |t(c)|$. Observe that $s-m$ and $s-n$ belong to $\semigr(\osmpoly)_{k-1}$, hence $\deg_{s-m}(w)\le k-1$ and $\deg_{s-n}(w)\le k-1$ for any vertex $w$. So if $w$ is a sink  not covered by $m$ or $n$, then $\deg_s(w)$ agrees with $\deg_{s-m}(w)$ or $\deg_{s-n}(w)$, 
thus $\deg_s(w)\le k-1$, and hence $\deg_t(w)\le k$. For the remaining sinks we have $\deg_t(w)=\deg_s(w)\le k$ as well, moreover,  
$\deg_t(v)=k$ for the  sources $v\in\quiver_0\setminus \quiver'_0$, whereas $\deg_t(v)\le 2$ for the new sources $v\in \quiver'_0\setminus \quiver_0$. 
Consequently successively adding further new arrows from $\quiver'_1\setminus \quiver_1$ to $t$ we obtain $s'\ge t$ with $\deg_{s'}(v)=k$ for all $v\in\quiver'_0$.  Moreover, 
$m'\le t\le s'$, $n'\le t\le s'$, and $\pi(s')=s$, so we are done. 
\end{proof}


\begin{center}Acknowledgement\end{center} 

We thank Bernd Sturmfels for bringing \cite{yamaguchi-ogawa-takemura} to our attention. 




\begin{thebibliography}{mmm}

\bibitem{adriaenssens-lebruyn} 
J. Adriaenssens and L. Le Bruyn, Local quivers and stable representations, 
Comm. Alg. 31 (2003), 1777-1797.


\bibitem{altmann-hille} K. Altmann and  L. Hille, Strong exceptional sequences provided by quivers, Algebras and  Represent.
Theory 2 (1) (1999) 1-17. 

\bibitem{altmann-nill-schwentner-wiercinska} K. Altmann,  B. Nill, S.  Schwentner, and I. Wiercinska, 
Flow polytopes and the graph of reflexive polytopes, 
Discrete Math. 309 (2009), no. 16, 4992-4999. 

\bibitem{altmann-straten} K. Altmann and D.  van Straten, 
Smoothing of quiver varieties,  Manuscripta Math. 129 (2009), no. 2, 211-230. 

\bibitem{bongartz} K. Bongartz, 
Some geometric aspects of representation theory, 
Algebras and Modules I,  CMS Conf. Proc. 23 (1998), 1-27, 
(I. Reiten, S.O. Smal{\o}, {\O}. Solberg, editors). 

\bibitem{bruns-gubeladze} W. Bruns and J. Gubeladze, Semigroup algebras and discrete geometry, S\'eminaires Congr\`es 6 (2002), 43-127. 

\bibitem{carroll-chindris-lin} A.T. Carroll, C. Chindris, Z. Lin, Quiver representations of constant Jordan type and vector bundles, arXiv: 1402.2568

\bibitem{cox-little-schenck} D. Cox, J. Little, and H. Schenck, Toric Varieties, Amer. Math. Soc., 2010. 

\bibitem{craw-smith} A. Craw and G. G. Smith, Projective toric varieties as fine moduli spaces of quiver representations, Amer. J. Math. 130 (2008), 1509-1534. 

\bibitem{domokos:gmj} M. Domokos, On singularities of quiver moduli, Glasgow Math. J. 53 (2011), 131-139. 

\bibitem{fei} Jiarui Fei, Moduli of representations I. Projections from quivers, arXiv:1011.6106v3. 

\bibitem{hatanaka} M. Hatanaka: Uniqueness of the direct decomposition of toric manifolds, arXiv: 1304.0891

\bibitem{hille:chemnitz} L. Hille, Moduli spaces of thin sincere representations of quivers, preprint, Chemnitz, 1995. 

\bibitem{hille:canada} L. Hille, Toric quiver varieties, pp. 311-325, Canad. Math. Soc. Conf. Proc. 24, Amer. Math. Soc., Providence, RI, 1998. 

\bibitem{hille:laa} L. Hille, Quivers, cones and polytopes, Linear Alg. Appl. 365 (2003), 215-237. 

\bibitem{joo} D. Jo\'o, Complete intersection quiver settings with one dimensional vertices, Algebr. Represent. Theory 16 (2013), 1109-1133. 


\bibitem{kac}
V. G. Kac, 
Infinite root systems, representations of graphs and invariant theory,
Invent. Math. 56 (1980), 57-92. 


\bibitem{king} 
A. D. King, 
Moduli of representations of finite dimensional algebras,
Quart. J. Math. Oxford (2), 45 (1994), 515-530. 

\bibitem{lebruyn-procesi} L. Le Bruyn and C. Procesi, 
Semisimple representations of quivers, 
Trans. Amer. Math. Soc. 317 (1990), 585-598.

\bibitem{lenz} M. Lenz, Toric ideals of flow polytopes, 22nd International Conference on Formal Power Series and Algebraic Combinatorics (FPSAC 2010), 889�896, 
Discrete Math. Theor. Comput. Sci. Proc., AN, Assoc. Discrete Math. Theor. Comput. Sci., Nancy, 2010. 

\bibitem{lenz-withdrawn} M. Lenz, Toric ideals of flow polytopes arXiv:0801.0495 Version 3. 

\bibitem{newstead}
P. E. Newstead, 
Introduction to Moduli Problems and Orbit Spaces,
Tata Institute Lecture Notes, Springer-Verlag, 1978.


\bibitem{schrijver} A. Schrijver, Combinatorial Optimization -- Polyhedra and Efficiency. Number 24A in Algorithms and Combinatorics, Springer, Berlin, 2003.  

\bibitem{skowronski-weyman}
A. Skowro{\'n}ski and J. Weyman, 
The algebras of semi-invariants of quivers, 
Transformation Groups, 5 (2000), No. 4., 361-402. 

\bibitem{yamaguchi-ogawa-takemura} Takashi Yamaguchi, Mitsunori Ogawa and Akimichi Takemura, 
Markov degree of the Birkhoff model, arXiv:1304.1237.  


\end{thebibliography}
\end{document}